\documentclass{amsart}
\usepackage{amsmath,amssymb, amsbsy}
\usepackage[dvips]{graphicx}
\usepackage{hyperref}
\usepackage{enumerate}
\setlength{\topmargin}{16pt} \setlength{\headheight}{20pt}
\setlength{\headsep}{30pt}
\setlength{\textwidth}{15cm}
\setlength{\textheight}{19cm}
\setlength{\oddsidemargin}{1cm} 
\setlength{\evensidemargin}{1cm} 

\renewcommand{\Re}{\mathfrak{Re}}
\newcommand{\R}{{\mathbb R}}
\newcommand{\N}{{\mathbb N}}
\newcommand{\Z}{{\mathbb Z}}
\newcommand{\C}{{\mathbb C}}

\newcommand{\inn}{\text{  in   }}
\newcommand{\dyle}{\displaystyle}
\newcommand{\dint}{\dyle\int}
\newcommand{\weakly}{\rightharpoonup}
\newcommand{\e }{\varepsilon}

\newcommand{\dive }{\mathop{\rm div}}

\newcommand{\A}{{\mathbf A}}

\renewcommand{\geq }{\geqslant}

\renewcommand{\leq }{\leqslant}

\newenvironment{pf}{\noindent{\sc Proof}.\enspace}{\hfill\qed\medskip}
\newenvironment{pfn}[1]{\noindent{\bf Proof of
    {#1}.\enspace}}{\hfill\qed\medskip}
\newtheorem{Theorem}{Theorem}[section]
\newtheorem{Corollary}[Theorem]{Corollary}
\newtheorem{Lemma}[Theorem]{Lemma}
\newtheorem{Proposition}[Theorem]{Proposition}
\theoremstyle{definition} \newtheorem{Definition}[Theorem]{Definition}

\newtheorem{remark}[Theorem]{Remark}
\begin{document}

\title[Parabolic equations with critical
electromagnetic potentials]
{On parabolic equations with critical electromagnetic
potentials}

\author[Veronica Felli]{Veronica Felli}
\address{\hbox{\parbox{5.7in}{\medskip\noindent{Universit\`a di Milano--Bicocca,\\
        Dipartimento di Scienza dei Materiali, \\
        Via Cozzi
        55, 20125 Milano, Italy. \\[3pt]
        \em{E-mail address: }{\tt veronica.felli@unimib.it}.}}}}
         
\author[Ana Primo]{Ana Primo}       
\address{ \hbox{\parbox{5.7in}{\medskip\noindent{Universidad  Aut\'onoma de  Madrid,\\ Departamento de  Matem{\'a}ticas, \\  28049 Madrid, Espa\~na.}\\[3pt]
 \em{E-mail address: }{\tt ana.primo@uam.es}.}}}
  
\date{October 22, 2018}

\thanks{2010 {\it Mathematics Subject Classification.} 35K67, 35B40, 35C15, 83C50.   \\
   \indent {\it Keywords.}  Singular electromagnetic potentials, parabolic equations,
unique continuation, integral representation.\\
  V. Felli is partially
supported by the PRIN2015 grant ``Variational methods, with
applications to problems in mathematical physics and
geometry''. A. Primo is supported by project MTM2016-80474-P, MEC, Spain}

 \begin{abstract}
   \noindent  
We consider a class of parabolic equations with critical electromagnetic
potentials, for which we obtain a classification of local asymptotics,
unique continuation results,
and an integral representation formula for solutions. \end{abstract}

\maketitle

\section{Introduction and statement of the main results}\label{intro}

This paper is concerned with the following class of  evolution equations
with critical electromagnetic potentials
\begin{equation}\label{prob}
u_t+\left(-i\nabla+ \dfrac{{\mathbf{A}}\big(\frac{x}{|x|}\big)} {|x|}
\right)^{\!\!2} u- \dfrac{a\big(\frac{x}{|x|}\big)}{|x|^2}\,u= h(x,t) u,
\end{equation}
in $\R^N\times I$, for some interval $I\subset\R$ and for $N\geq 2$. 
Here $u=u(x,t):{\mathbb{R}}^{N}\times I\to{\mathbb{C}}$,  $a\in
L^{\infty}({\mathbb{S}}^{N-1}, {\mathbb{R}})$, ${\mathbb{S}}^{N-1}$ denotes
the unit $(N-1)$--dimensional sphere, and ${\mathbf{A}}\in C^1({\mathbb{S}} ^{N-1},{\mathbb{R}}^N)$ satisfies the following  transversality condition
\begin{equation}  \label{transversality}
{\mathbf{A}}(\theta)\cdot\theta=0 \quad \text{for all }\theta\in {\mathbb{S}}
^{N-1}.
\end{equation}
We always denote by $r:=|x|$, $\theta=x/|x|$, so that $x=r\theta$.
Under the transversality condition \eqref{transversality}, the hamiltonian
\begin{equation}  \label{eq:hamiltonian}
{\mathcal{L}}_{{\mathbf{A}},a}:= \left(-i\,\nabla+\frac{{\mathbf{A}}\big(%
\frac{x}{|x|}\big)} {|x|}\right)^{\!\!2}-\dfrac{a\big(\frac{x}{|x|}\big)}{%
|x|^2}
\end{equation}
formally acts on functions $f:{\mathbb{R}}^N\to{\mathbb{C}}$ as
\begin{equation*}
{\mathcal{L}}_{{\mathbf{A}},a}f= -\Delta  f+\frac{|{\mathbf{A}}\big(\frac{x}{%
|x|}\big)|^2- a\big(\frac{x}{|x|}\big)-i\dive\nolimits_{{\mathbb{S}}^{N-1}}{%
\mathbf{A}}\big(\frac{x}{|x|}\big)
  }{|x|^2}\,f-2i\,\frac{{\mathbf{A}}\big(\frac{x}{|x|}\big)}{|x|}\cdot\nabla
f,
\end{equation*}
where $\dive\nolimits_{{\mathbb{S}}^{N-1}}{\mathbf{A}}$ denotes the
Riemannian  divergence of ${\mathbf{A}}$ on the unit sphere ${\mathbb{S}}%
^{N-1}$ endowed  with the standard metric.

The electromagnetic potential appearing in \eqref{prob} is singular
and homogeneous.   A prototype in dimension $2$ of such type of
potentials is given by the   Aharonov-Bohm vector potential 
\begin{equation}\label{eq:72}
(x_1,x_2)\mapsto {\boldsymbol{\mathcal A}}(x_1,x_2)=
\alpha\left(-\frac{x_2}{x_1^2+x_2^2},\frac{x_1}{x_1^2+x_2^2}\right)
\end{equation}
which is associated to thin solenoids. If the radius of the solenoid
tends to zero while the flux through it remains constantly equal to
$\alpha\not\in\Z$, then a $\delta$-type magnetic field is produced and the so-called
 Aharonov-Bohm effect occurs, i.e. the magnetic potential affects
 charged quantum particles
 moving in $\R^2\setminus\{(0,0)\}$, even if the magnetic field
 ${\boldsymbol{\mathcal B}}=
 \mathop{\rm curl}{\boldsymbol{\mathcal A}}$ is zero there.

We mention that heat semigroups generated by  magnetic
Schr\"odinger operators have been studied in \cite{cazacu,kovarik}.
In particular in \cite{cazacu} the case of a compactly supported smooth
magnetic field was considered and it was shown that the large time
behavior of the heat semigroup is related to a magnetic  eigenvalue problem 
on the $(N-1)$-dimensional sphere (see \eqref{angular}). In \cite{kovarik}
it was proved that the large time behavior of magnetic heat kernels in two dimensions is
determined by the flux of the
magnetic field.

In the present paper, we aim at providing some unique continuation principles
for problem \eqref{prob} under suitable assumptions on the
perturbing potential $h$ and deriving a representation formula for
solutions in the case $h\equiv0$.

In order to establish unique continuation properties, we will describe
the asymptotic behaviour of solutions near the singularity, under the
assumption that, in some bounded interval $I$,   the real-valued
function $h$ satisfies
\begin{equation}\label{eq:der}
\begin{cases}
h,h_t\in L^{r}\big(I,L^{{N}/{2}}(\R^N)\big)
\quad\text{for some }r>1,
\quad h_t\in L^{\infty}_{\rm
  loc}\big(I,L^{{N}/{2}}(\R^N)\big),&\text{if }N\geq3,\\
h,h_t\in L^{r}\big(I,L^{p}(\R^N)\big)
\quad\text{for some }p,r>1,
\quad h_t\in L^{\infty}_{\rm
  loc}\big(I,L^{p}(\R^N)\big),&\text{if }N=2,\\
\end{cases}
\end{equation}
and there exists $C_h>0$ such that
\begin{equation}\label{eq:h}
  |h(x,t)|\leq C_h(1+|x|^{-2+\e}) \quad
  \text{for all }t\in I,\text{ a.e. }x\in\R^N,
  \text{ and for some }\e\in(0,2).
\end{equation}
In particular,  for $t_0\in I$ fixed, we are interested in describing the
behavior of solutions along the directions $(\lambda x,t_0-\lambda^2
t)$ 
naturally related to the heat operator. Indeed, since the unperturbed
operator $u_t+\left(-i\nabla+ \frac{{\mathbf{A}}({x}/{|x|})} {|x|}
\right)^{\!2} u- \frac{a({x}/{|x|})u}{|x|^2}\,$ is invariant under
the action $(x,t)\mapsto(\lambda x,t_0+\lambda^2 t)$, we are
interested in evaluating the asymptotics of
$$
u(\sqrt{t_0-t}\, x,t)\quad\text{as }t\to t_0^-
$$
for solutions to (\ref{prob}).  We notice that, in the magnetic-free
case $\A\equiv 0$, 
an asymptotic analysis for solutions to (\ref{prob}) was developed in \cite{FP}.

In the  description of the asymptotic behavior at the singularity of
solutions to (\ref{prob}) a key role is played by 
the eigenvalues and eigenfunctions of the Ornstein-Uhlenbeck magnetic operator
with singular inverse square potential
\begin{equation}\label{eq:13}
L_{\A,a}:{\mathcal H}\to{\mathcal H}^\star,\quad
L_{\A,a}= {\mathcal{L}}_{{\mathbf{A}},a}+\frac{x}2\cdot\nabla,
\end{equation}
defined as
$$
{}_{{\mathcal H}^\star}\langle L_{\A,a}v,w \rangle_{{\mathcal H}} =
\int_{\R^N}\bigg(\nabla_\A v(x)\cdot\overline{\nabla_\A
  w(x)}-\frac{a(x/|x|)}{|x|^2}\,v(x)\overline{w(x)}\bigg)e^{-\frac{|x|^2}{4}}\,dx,
\quad\text{for all }v,w\in{\mathcal H}.
$$
Here $\mathcal H$ is the functional space defined as 
the completion of $C^{\infty}_{\rm c}(\R^N\setminus\{0\},\C)$ with
respect to the norm
\begin{equation}\label{eq:Hstar}
\|v\|_{\mathcal{H} }=\bigg(\int_{{\mathbb{R}}^N}\bigg(|\nabla v(x)|^2+
|v(x)|^2+\frac{|v(x)|^2}{|x|^2}\bigg) e^{-\frac{|x|^2}4} \,dx\bigg)^{\!\!1/2}.
\end{equation}
The spectrum of $ {{L}}_{{\mathbf{A}},a}$ is related to the angular component
of the operator ${\mathcal{L}}_{{\mathbf{A}},a}$ on the unit 
$(N-1)$--dimensional sphere ${\mathbb{S}}^{N-1}$, i.e. to the operator
\begin{align}\label{eq:angular}
T_{{\mathbf{A}},a} & =\big(-i\,\nabla_{\mathbb{S}^{N-1}}+{\mathbf{A}}\big)%
^2-a(\theta) =-\Delta_{\mathbb{S}^{N-1}}+\big(|{\mathbf{A}}|^2-a(\theta)-i\,\dive%
\nolimits_{{\mathbb{S}}^{N-1}}{\mathbf{A}}\big)-2i\,{\mathbf{A}}\cdot\nabla_{%
\mathbb{S}^{N-1}}.  \notag
\end{align}
By classical spectral theory, $T_{{\mathbf{A}},a}$ admits a diverging
sequence of real eigenvalues with finite multiplicity $\mu_1({\mathbf{A}}%
,a)\leq\mu_2({\mathbf{A}},a)\leq\cdots\leq\mu_k({\mathbf{A}},a)\leq\cdots$,
see \cite[Lemma A.5]{FFT}.
The   first eigenvalue $\mu_1(\A,a)$ admits the following variational characterization:
\begin{equation}\label{firsteig}
  \mu_1(\A,a)=\min_{\psi\in H^1(\mathbb
    S^{N-1})\setminus\{0\}}\frac{\int_{\mathbb S^{N-1}}
    \big[\big|\big(\nabla_{\mathbb S^{N-1}}+i\A(\theta)\big)\psi(\theta)\big|^2
    -a(\theta)|\psi(\theta)|^2\big]\,dS(\theta)}{\int_{\mathbb S^{N-1}}
    |\psi(\theta)|^2\,dS(\theta)}.
\end{equation}
 To each $k\in{\mathbb{N}}$, $k\geq 1$, we
associate a $L^{2}\big({\mathbb{S}}^{N-1},{\mathbb{C}}\big)$-normalized
eigenfunction $\psi_k$ of the operator $T_{{\mathbf{A}},a}$ on $\mathbb{S}%
^{N-1}$ corresponding to the $k$-th eigenvalue $\mu_{k}({\mathbf{A}},a)$,
i.e. satisfying
\begin{equation}  \label{angular}
\begin{cases}
T_{{\mathbf{A}},a}\psi_{k}=\mu_k({\mathbf{A}},a)\,\psi_k(\theta), & \text{in
}{\mathbb{S}}^{N-1}, \\[3pt]
\int_{{\mathbb{S}}^{N-1}}|\psi_k(\theta)|^2\,dS(\theta)=1. &
\end{cases}%
\end{equation}
In the enumeration $\mu_1({\mathbf{A}},a)\leq\mu_2({\mathbf{A}}%
,a)\leq\cdots\leq\mu_k({\mathbf{A}},a)\leq \cdots$ we repeat each eigenvalue
as many times as its multiplicity, so that exactly one eigenfunction $\psi_k$
corresponds to each index $k\in{\mathbb{N}}$. Furthermore, the
functions $\psi_k$
 can be chosen  in such a way that they form an orthonormal basis of $L^2({\mathbb{S}%
}^{N-1},{\mathbb{C}})$. 
We mention that the key role played by the  angular magnetic Schr\"odinger
operator  $T_{\A,a}$
in 
the  behaviour of the heat magnetic semigroup was 
already highlighted  in \cite{cazacu}.

We notice that, under the condition
\begin{equation}  \label{eq:hardycondition}
\mu_1({\mathbf{A}},a)>-\left(\frac{N-2}{2}\right)^{\!\!2}
\end{equation}
the quadratic form associated to $\mathcal{L}_{{\mathbf{A}},a}$ is positive
definite (see \cite[Lemma 2.2]{FFT}), thus implying that the hamiltonian $\mathcal{L}_{{\mathbf{A}},a}$ is
a symmetric semi-bounded operator on $L^2({\mathbb{R}}^N;{\mathbb{C}})$
which then admits a self-adjoint extension (Friedrichs extension) with the
natural form domain.

We introduce the notation
\begin{equation}  \label{eq:alfabeta}
\alpha_k:=\frac{N-2}{2}-\sqrt{\bigg(\frac{N-2}{2}\bigg)^{\!\!2}+\mu_k({%
\mathbf{A}},a)}, \quad \beta_k:=\sqrt{\left(\frac{N-2}{2}\right)^{\!\!2}+%
\mu_k({\mathbf{A}},a)},
\end{equation}
so that $\beta_{k}=\frac{N-2}{2}-\alpha_{k}$, for all $k\in\N$, $k\geq1$.

The first result of the present paper is the following complete description of the
spectrum of the operator $L_{\A,a}$. We mention that analogous results
were proved in the case where $a$ is a constant and $\A\equiv0$ in \cite[\S
9.3]{vazquez_zuazua} and in the magnetic-free case $\A\equiv0$ in
\cite[Proposition 1]{FP}; see also \cite[\S 4.2]{CH} and \cite[\S
2]{ES} for the non singular case.

\begin{Proposition}\label{p:explicit_spectrum}
The set of the eigenvalues  of the operator $L_{\A,a}$ is
$\big\{ \gamma_{m,k}: k,m\in\N, k\geq 1\big\}$
where
\begin{equation}\label{eq:65}
\gamma_{m,k}=m-\frac{\alpha_k}2, 
\end{equation}
being $\alpha_k$ as in \eqref{eq:alfabeta}. Each eigenvalue
$\gamma_{m,k}$ has finite multiplicity equal 
to
$$
\#\bigg\{j\in\N,j\geq 1:
\gamma_{m,k}+\frac{\alpha_j}2\in\N\bigg\}
$$
and a basis of the corresponding eigenspace is
$\left\{V_{n,j}: j,n\in\N,j\geq
  1,\gamma_{m,k}=n-\frac{\alpha_j}2 \right\}$,
where
\begin{equation}\label{eq:66}
V_{n,j}(x)=
|x|^{-\alpha_j}P_{j,n}\bigg(\frac{|x|^2}{4}\bigg)
\psi_j\Big(\frac{x}{|x|}\Big),
\end{equation}
$\psi_j$ is an eigenfunction of the operator
$T_{A,a}$ 
associated to the $j$-th eigenvalue $\mu_{j}(\A,a)$ as in \eqref{angular},
  and $P_{j,n}$
is the polynomial of degree $n$ given by
$$
P_{j,n}(t)=\sum_{i=0}^n
\frac{(-n)_i}{\big(\frac{N}2-\alpha_j\big)_i}\,\frac{t^i}{i!},
$$
denoting as $(s)_i$, for all $s\in\R$, the Pochhammer's symbol
$(s)_i=\prod_{j=0}^{i-1}(s+j)$, $(s)_0=1$.
\end{Proposition}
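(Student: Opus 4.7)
The plan is to proceed by separation of variables combined with confluent hypergeometric function theory. Since $\{\psi_k\}_{k\geq1}$ is an orthonormal basis of $L^2(\mathbb S^{N-1},\C)$, every $v\in\mathcal H$ admits an expansion $v(r\theta)=\sum_{k\geq1}\phi_k(r)\psi_k(\theta)$. Writing $\mathcal L_{\A,a}$ in spherical coordinates and using $T_{\A,a}\psi_k=\mu_k(\A,a)\psi_k$, the eigenvalue equation $L_{\A,a}v=\gamma v$ decouples into the family of radial ODEs
\begin{equation*}
-\phi_k''(r)-\tfrac{N-1}{r}\phi_k'(r)+\tfrac{\mu_k(\A,a)}{r^2}\phi_k(r)+\tfrac{r}{2}\phi_k'(r)=\gamma\,\phi_k(r),\qquad k\geq1.
\end{equation*}
Notice that the angular potential $\A$ disappears from the radial equation; it survives only through $\mu_k(\A,a)$, so the problem is reduced to a one-dimensional spectral analysis for each angular mode.

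Next, I would make the ansatz $\phi_k(r)=r^{-\alpha_k}g(r^2/4)$, motivated by the fact that the two Frobenius exponents of the radial ODE at $r=0$ are $-\alpha_k$ and $-\beta_k$ and only the former is compatible with $\|\cdot\|_{\mathcal H}$ thanks to the inverse-square term in \eqref{eq:Hstar}. Using the identity $\alpha_k^2-(N-2)\alpha_k=\mu_k(\A,a)$ one checks, after the change of variable $t=r^2/4$, that the radial ODE reduces to Kummer's confluent hypergeometric equation
\begin{equation*}
t\,g''(t)+\Bigl(\tfrac{N}{2}-\alpha_k-t\Bigr)g'(t)+\Bigl(\tfrac{\alpha_k}{2}+\gamma\Bigr)g(t)=0.
\end{equation*}
The two linearly independent solutions are the Kummer function $M\!\bigl(-\tfrac{\alpha_k}{2}-\gamma,\tfrac{N}{2}-\alpha_k,t\bigr)$ and the Tricomi function $U\!\bigl(-\tfrac{\alpha_k}{2}-\gamma,\tfrac{N}{2}-\alpha_k,t\bigr)$; the latter reintroduces the discarded Frobenius branch $r^{-\beta_k}$ at the origin and is thus incompatible with $\mathcal H$. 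At infinity, non-terminating Kummer series behave asymptotically like $e^t=e^{r^2/4}$, which is incompatible with the Gaussian weight of $\mathcal H$; compatibility therefore forces termination, i.e.\ $-\tfrac{\alpha_k}{2}-\gamma=-m$ for some $m\in\N$. This yields exactly $\gamma=\gamma_{m,k}=m-\alpha_k/2$ and identifies $g$ with the polynomial $P_{k,m}$ in \eqref{eq:66}, so that $\phi_k(r)\psi_k(\theta)=V_{m,k}(r\theta)$.

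To close the argument I would verify that the family $\{V_{m,k}\}$ is complete in $\mathcal H$, which rules out extra eigenvalues and produces the claimed multiplicity count simply by collecting all pairs $(j,n)$ satisfying $n-\alpha_j/2=\gamma_{m,k}$. Within each $k$-sector this is classical Laguerre completeness: up to normalization, $P_{k,n}$ coincides with the generalized Laguerre polynomial $L_n^{(N/2-\alpha_k-1)}$ in the variable $t=r^2/4$, and such polynomials form an orthogonal basis of the weighted $L^2$ space with measure $t^{N/2-\alpha_k-1}e^{-t}\,dt$; combined with the spherical-harmonic basis $\{\psi_k\}$ on $\mathbb S^{N-1}$, Parseval's identity provides the required decomposition in $\mathcal H$.

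The main obstacle I foresee is the careful functional-analytic bookkeeping at the origin: verifying rigorously that the inverse-square term in \eqref{eq:Hstar} really rules out the $r^{-\beta_k}$ branch, in particular in borderline situations where the Hardy-type gap $\beta_k-\alpha_k$ is small, and ensuring that the formal separation of variables is valid in the topology of $\mathcal H$ rather than only pointwise. The asymptotic analysis at infinity, which discriminates between terminating and non-terminating Kummer solutions via the Gaussian weight, is more standard once the weighted $L^2$-theory of confluent hypergeometric functions is invoked.
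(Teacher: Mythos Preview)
Your approach is correct and matches the paper's: the proof there performs the same spherical-harmonic expansion to obtain the radial ODE \eqref{eq:51} and then cites \cite[Proposition 1]{FP} for the confluent-hypergeometric analysis and Laguerre completeness that you outline explicitly. One small slip: the discarded Frobenius exponent at $r=0$ is $-\tfrac{N-2}{2}-\beta_k$ (equivalently $-\alpha_k-2\beta_k$), not $-\beta_k$, though this does not affect the argument.
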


The second main result of the present paper establishes
a sharp relation between the asymptotic behaviour of solutions
to (\ref{prob}) along the directions $(\lambda x,t_0-\lambda^2
t)$ and the spectrum of the operator $L_{\A,a}$.
Indeed we prove that 
\[
u(\sqrt{t_0-t}\, x,t)\sim (t_0-t)^\gamma
g(x)\quad\text{as as $t\to
t_0^-$},
\]
 where $\gamma$ is an eigenvalue of $L_{\A,a}$
and $g$ is an associated eigenfunction. In order to state precisely
the result of our asymptotic analysis, we introduce 
the Hilbert space  ${\mathcal{H}_t }$
 defined, for every $t>0$,  as the completion of 
$C^{\infty}_{\mathrm{c}}({\mathbb{R}}^N\setminus\{0\},{\mathbb{C}})$ with respect to the norm
\begin{equation}\label{eq:star}
\|\phi\|_{\mathcal{H}_{t} }=\bigg(\int_{{\mathbb{R}}^N}\bigg(t|\nabla\phi(x)|^2+
|\phi(x)|^2+t\frac{|\phi(x)|^2}{|x|^2}\bigg)G(x,t) \,dx\bigg)^{\!\!1/2},
\end{equation}
where 
\begin{equation*}
G(x,t)=t^{-N/2}\exp\Big(-\frac{|x|^2}{4t}\Big)
\end{equation*}
is the heat kernel of the free evolution forward equation satisfying
\begin{equation}\label{eq:heatker}
 G_t-\Delta G=0\quad\text{and}\quad \nabla G(x,t)=-\frac{x}{2t}\,G(x,t)
\quad\text{in }\R^N\times (0,+\infty).
\end{equation}
For every $t>0$, we also define  the space ${\mathcal L}_t$ as
the completion of $C^{\infty}_{\rm c}(\R^N)$ with respect to the norm
$$
\|u\|_{{\mathcal L}_t}=\bigg(\int_{\R^N}|u(x)|^2 G(x,t)\,dx\bigg)^{\!\!1/2}.
$$
We set $\mathcal L:=\mathcal L_1$.
\begin{Theorem}\label{asym1}
Let
$N\geq2$,  ${\mathbf{A}}\in C^1({\mathbb{S}}
^{N-1},{\mathbb{R}}^N)$ and $a\in L^{\infty}\big({\mathbb S}^{N-1},\R\big)$ satisfy \eqref{transversality}
and
  \eqref{eq:hardycondition}. 
  Let $u\not\equiv 0$ be a weak solution to (\ref{prob}) (see 
  Definition \ref{def:solution} for the notion of weak solution)
in $\R^N\times(t_0-T,t_0)$ with  $h$ satisfying
  \eqref{eq:der}--\eqref{eq:h} in $I=(t_0-T,t_0)$ for some $t_0\in\R$
  and $T>0$. Then there exist $m_0,k_0\in\N$,
  $k_0\geq 1$, such that
\begin{equation}\label{eq:671}
\lim_{t\to t_0^-}\widetilde{\mathcal
  N}(t)=\gamma_{m_0,k_0}=m_0-\frac{\alpha_{k_0}}2,
\end{equation}
where 
\begin{equation}\label{eq:almgren_quotient}
\widetilde{\mathcal N}(t)=\frac{
(t_0-t) \int_{\R^N}\big(|\nabla_\A  u(x,t)|^2-
  \frac{a({x}/{|x|})}{|x|^2}| u(x,t)|^2- h(x,t)|u(x,t)|^2\big)\, e^{-\frac{|x|^2}{4(t_0-t)}}\,dx}
{\int_{\R^N}|u(x,t)|^2\, e^{-\frac{|x|^2}{4(t_0-t)}}\,dx}.
\end{equation}
Furthermore, denoting as
$J_0$ the finite set of indices
$J_0=\{(m,k)\in\N\times(\N\setminus\{0\}):m-\frac{\alpha_{k}}2=\gamma_{m_0,k_0}\}$,
for all $\tau \in(0,1)$ there holds
\begin{equation}\label{eq:80}
\lim_{\lambda\to0^+}\int_\tau^1
\bigg\|\lambda^{-2\gamma_{m_0,k_0}}u(\lambda x,t_0-\lambda^2t)
-t^{\gamma_{m_0,k_0}}\sum_{(m,k)\in J_0}\beta_{m,k}\widetilde V_{m,k}(x/\sqrt t)
\bigg\|_{{\mathcal H}_t}^2dt=0
\end{equation}
and
\begin{equation}\label{eq:81}
\lim_{\lambda\to0^+}\sup_{t\in[\tau,1]}
\bigg\|\lambda^{-2\gamma_{m_0,k_0}}u(\lambda x,t_0-\lambda^2t)
-t^{\gamma_{m_0,k_0}}\sum_{(m,k)\in J_0}\beta_{m,k}\widetilde V_{m,k}(x/\sqrt t)
\bigg\|_{{\mathcal L}_t}=0,
\end{equation}
where
$\widetilde V_{m,k}=
V_{m,k}/\|V_{m,k}\|_{\mathcal L}$,
$V_{m,k}$ are as in (\ref{eq:66}),
\begin{multline}\label{eq:beta1}
\beta_{m,k}=\Lambda^{-2\gamma_{m_0,k_0}}
\int_{\R^N}u(\Lambda x,t_0-\Lambda^2)\overline{\widetilde V_{m,k}(x)}G(x,1)\,dx\\
+2\int_0^{\Lambda}s^{1-2\gamma_{m_0,k_0}}
\bigg(\int_{\R^N}h(s x,t_0- s^2) u(s x,t_0-s^2)\overline{\widetilde
  V_{m,k}(x)}
G(x,1)\,dx\bigg)
ds
\end{multline}
for all $\Lambda\in(0,\Lambda_0)$ and for some
$\Lambda_0\in(0,\sqrt{T})$, and $\beta_{m,k}\neq0$ for some $(m,k)\in J_0$.
\end{Theorem}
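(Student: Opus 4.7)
The plan is to combine an Almgren-type monotonicity argument with a blow-up analysis adapted to the parabolic self-similar scaling $(x,t)\mapsto(\lambda x,t_0-\lambda^2 t)$, following the strategy developed in \cite{FP} for the magnetic-free case. Introducing the height $H(t)=\int_{\R^N}|u(x,t)|^2\,G(x,t_0-t)\,dx$ and the Dirichlet-type energy
\[
D(t)=(t_0-t)\int_{\R^N}\Big(|\nabla_\A u|^2-\tfrac{a(x/|x|)}{|x|^2}|u|^2-h|u|^2\Big)G(x,t_0-t)\,dx,
\]
one has $\widetilde{\mathcal N}(t)=D(t)/H(t)$. A careful differentiation in $t$, using the equation for $u$ and the Pohozaev-type identity arising from integration by parts against the drift $\frac{x}{2(t_0-t)}$ (the logarithmic gradient of the backward heat kernel), would yield $H'(t)=2D(t)/(t_0-t)$ and an expression for $D'(t)$ whose leading part is nonnegative. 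From this I would show that $\widetilde{\mathcal N}$ is bounded from below and that $t\mapsto\widetilde{\mathcal N}(t)+O((t_0-t)^\delta)$ is monotone, so that $\gamma:=\lim_{t\to t_0^-}\widetilde{\mathcal N}(t)$ exists and is finite. The Hardy condition \eqref{eq:hardycondition} is essential to absorb the inverse-square term, while \eqref{eq:h} makes the contributions coming from $h$ lower-order remainders.

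Next I would perform the blow-up. Setting $u_\lambda(x,t)=\lambda^{-2\gamma}u(\lambda x,t_0-\lambda^2 t)$, the scaling law for $H$ (obtained by integrating $H'/H=2\widetilde{\mathcal N}/(t_0-t)$) provides uniform bounds for $u_\lambda$ in $L^2((\tau,1);{\mathcal H}_t)\cap L^\infty((\tau,1);{\mathcal L}_t)$. Along a subsequence $\lambda_n\to0^+$, $u_{\lambda_n}\weakly \widetilde u$. The rescaled equation reads $(u_\lambda)_t+{\mathcal L}_{\A,a}u_\lambda=\lambda^2 h(\lambda x,t_0-\lambda^2 t)u_\lambda$, whose right-hand side vanishes in the limit by \eqref{eq:h}, so $\widetilde u$ solves the unperturbed parabolic equation. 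The scale-invariance of $\widetilde{\mathcal N}$ along $u_\lambda$, together with the value $\gamma$ of its limit, forces $\widetilde u$ to have the self-similar form $\widetilde u(x,t)=t^\gamma\,\Phi(x/\sqrt t)$; substituting back, $\Phi$ satisfies $L_{\A,a}\Phi=\gamma\Phi$. Proposition \ref{p:explicit_spectrum} then gives $\gamma=\gamma_{m_0,k_0}$ for some $(m_0,k_0)\in\N\times(\N\setminus\{0\})$ and $\widetilde u(x,t)=t^{\gamma_{m_0,k_0}}\sum_{(m,k)\in J_0}\beta_{m,k}\widetilde V_{m,k}(x/\sqrt t)$.

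To upgrade weak to strong convergence and obtain \eqref{eq:80}--\eqref{eq:81}, I would exploit the convergence of the Almgren quotient to pin down the limit of the energy and force convergence of norms in ${\mathcal H}_t$, then invoke a parabolic Aubin--Lions type compactness to promote this to uniform convergence in ${\mathcal L}_t$. To identify the coefficients $\beta_{m,k}$ independently of the chosen subsequence, I would introduce the scalar functions $\phi_{m,k}(\lambda)=\lambda^{-2\gamma_{m_0,k_0}}\int_{\R^N}u(\lambda x,t_0-\lambda^2)\,\overline{\widetilde V_{m,k}(x)}\,G(x,1)\,dx$, derive a first-order ODE for them by projecting the equation for $u$ (in the self-similar variables $y=x/\sqrt{t_0-t}$, $s=-\log(t_0-t)$) onto $\widetilde V_{m,k}$, and integrate via variation of parameters. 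This produces exactly formula \eqref{eq:beta1}, valid for every $\Lambda\in(0,\Lambda_0)$ with $\Lambda_0$ small enough that the perturbative term is integrable.

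The main obstacle will be the non-triviality statement, namely that some $\beta_{m,k}$ with $(m,k)\in J_0$ is nonzero. I would argue by contradiction: if all such coefficients vanished, a refined Fourier analysis of $u$ in the complete eigenbasis $\{\widetilde V_{m,k}\}$, with careful control of the perturbation produced by $h$ through \eqref{eq:h}, would show that $\widetilde{\mathcal N}(t)$ converges to a strictly larger eigenvalue $\gamma_{m_0,k_0}+\nu$ for some $\nu>0$ (the spectral gap of $L_{\A,a}$ above $\gamma_{m_0,k_0}$ provided by Proposition \ref{p:explicit_spectrum}), contradicting the already established limit. This step is the technically heaviest part of the argument, as it requires sharp asymptotic decay estimates on the Fourier projections of $u$ and a bootstrap that upgrades an a priori upper bound on $\widetilde{\mathcal N}$ to the next spectral level.
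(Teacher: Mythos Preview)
Your overall architecture matches the paper's: Almgren--Poon monotonicity to obtain a finite limit $\gamma$ for the frequency, parabolic blow-up to identify $\gamma$ as an eigenvalue of $L_{\A,a}$ and to extract a self-similar limit, and an ODE for the Fourier coefficients $u_{m,k}(\lambda)=\int_{\R^N}u(\lambda x,t_0-\lambda^2)\overline{\widetilde V_{m,k}(x)}G(x,1)\,dx$ to pin down formula \eqref{eq:beta1} and remove the subsequence dependence.

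The one substantive divergence is in how you handle non-triviality of the limit profile, and here your proposal is both different from the paper and not fully justified. You blow up with the normalization $\lambda^{-2\gamma}$ from the outset; integration of $H'/H$ only gives $H(t)\le K_1(t_0-t)^{2\gamma}$ and $H(t)\ge K_2(\sigma)(t_0-t)^{2\gamma+\sigma}$ for every $\sigma>0$, so your blow-up family is bounded but might converge to zero. Your contingency plan---show that if all $\beta_{m,k}$ vanish then $\widetilde{\mathcal N}$ actually converges to the next eigenvalue---is plausible in spirit but you give no mechanism for it, and in the parabolic setting with a perturbation $h$ it is not clear how to bootstrap the decay of the $J_0$-projections into an improved global upper bound $H(t)\le C(t_0-t)^{2\gamma'}$ with $\gamma'>\gamma$.

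The paper avoids this difficulty by a two-step normalization. First it blows up $w_\lambda(x,t)=u(\lambda x,t_0-\lambda^2 t)/\sqrt{H(\lambda^2)}$, which satisfies $\|w_\lambda(\cdot,1)\|_{\mathcal L}=1$ by construction; the limit $g$ is then automatically a \emph{nonzero} eigenfunction of $L_{\A,a}$. Second, it proves separately that $\lim_{t\to 0^+}(t_0-t)^{-2\gamma}H(t)$ exists and is strictly positive: assuming the limit is zero, the ODE for $u_{m,k}$ together with the perturbation estimate $|\xi_{m,k}(\lambda)|=O(\lambda^{-2+\e/2}\sqrt{H(\lambda^2)})$ forces $u_{m,k}(\lambda)=O(\lambda^{2\gamma+\e/2})$ for $(m,k)\in J_0$, hence $u_{m,k}(\lambda)/\sqrt{H(\lambda^2)}\to 0$; but this contradicts the first blow-up step, which says these ratios converge to the (nonzero) Fourier coefficients of $g$. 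Once $\lim(t_0-t)^{-2\gamma}H(t)>0$ is known, the $\sqrt{H(\lambda^2)}$ and $\lambda^{2\gamma}$ normalizations are equivalent and \eqref{eq:80}--\eqref{eq:81} follow. This circular use of the $\sqrt{H}$-normalized blow-up inside the positivity proof is the idea you are missing; it replaces your proposed spectral-gap bootstrap with a short and self-contained contradiction.
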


The effect of the magnetic singular potential on the
local behavior of solutions can be recognized in the values of the
limit frequencies $\gamma_{m,k}$ which are directly related to the
angular eigenvalues $\mu_k(\A,a)$ through formulas \eqref{eq:alfabeta}
and \eqref{eq:65}. We observe that the magnetic eigenvalues
$\mu_k(\A,a)$ are indeed different from the magnetic-free  eigenvalues
$\mu_k(0,a)$, at least in the case of a non irrotational magnetic
vector potential; e.g. in \cite[Lemma A.2]{FFT} it was proved that $\mu_1(\A,a)>
\mu_1(0,a)$ if $\mathop{\rm curl}(\A/|x|)\not\equiv 0$. We also recall
that, in the relevant example of a Aharonov-Bohm vector potential
\eqref{eq:72} in dimension $N=2$ (with $a\equiv0$), the magnetic eigenvalues are
explicitly known and the limit frequencies turns out to be
\[
m+\frac{|\alpha-k|}2,\quad m,k\in\N,
\]
showing how the asymptotic  behaviour of solutions strongly relies on
the circulation $\alpha$.

The proof of Theorem \ref{asym1} is based on a parabolic Almgren-Poon type 
monotonicity formula in the spirit of \cite{poon}  combined with a
blow-up analysis, see also \cite{FP}. In particular, the function
\eqref{eq:almgren_quotient}  
represents the Poon's electromagnetic parabolic counterpart  of the frequency quotient
introduced by Almgren in \cite{almgren}   
and used by Garofalo and Lin \cite{GL} to prove unique continuation
properties for elliptic equations with variable coefficients; indeed,
both in the elliptic and in the parabolic case, monotonicity
of the frequency function implies doubling properties of the solution
and then the validity of unique continuation principles. As done in
the proof of Theorem \ref{asym1} and as already observed in \cite{FP}
in the magnetic free case, the combination of
the monotonicity argument with a blow-up analysis allows proving not
only unique continuation but also the precise asymptotic description of
scaled solutions near the singularity given in
\eqref{eq:80}--\eqref{eq:81}. 
We notice that the magnetic free results of
\cite{FP} and their magnetic counterpart of the present paper
generalize the classification of local asymptotics of solutions to
parabolic equations with bounded coefficients obtained in \cite{CH} 
to the case of singular
homogenous potentials (Hardy potential and homogenous electromagnetic
potentials); we recall that the approach in \cite{CH}  is based on recasting
equations in terms of parabolic
self-similar variables. 
We also mention \cite{AV,E,EF,EKPV,EV,FE} for
unique continuation results for parabolic equations with
time-dependent potentials via Carleman inequalities and monotonicity
methods.

As a consequence of Theorem \ref{asym1} we obtain the following {\it strong unique
continuation property} at the singularity.

\begin{Corollary}\label{cor:uniq_cont}
  Let $u$ be a weak solution to (\ref{prob}) in $(t_0-T,t_0)$ under
  the assumptions of Theorem \ref{asym1}. If
\begin{equation}\label{eq:uniq_cont}
u(x,t)=O\Big(\Big(|x|^2+(t_0-t)\Big)^k\Big)\quad\text{as }x\to 0\text{ and
}t\to t_0^-,
\quad\text{for all }k\in\N,
\end{equation}
then $u\equiv 0$ in $\R^N\times(t_0-T,t_0)$.
\end{Corollary}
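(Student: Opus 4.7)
The plan is to argue by contradiction using Theorem \ref{asym1}. Suppose, aiming for a contradiction, that $u\not\equiv 0$. Applying Theorem \ref{asym1} produces indices $m_0\in\N$, $k_0\geq 1$, and coefficients $(\beta_{m,k})_{(m,k)\in J_0}$ that are not all zero, such that the rescaled family $v_\lambda(x,t):=\lambda^{-2\gamma_{m_0,k_0}}u(\lambda x,t_0-\lambda^2 t)$ converges in the $\mathcal{L}_t$-norm, uniformly for $t\in[\tau,1]$, to the profile $\Psi(x,t):=t^{\gamma_{m_0,k_0}}\sum_{(m,k)\in J_0}\beta_{m,k}\widetilde V_{m,k}(x/\sqrt t)$, cf.\ \eqref{eq:81}. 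By Proposition \ref{p:explicit_spectrum}, the eigenfunctions $\{\widetilde V_{m,k}\}_{(m,k)\in J_0}$ form a basis of the eigenspace of $L_{\A,a}$ associated to $\gamma_{m_0,k_0}$, hence they are linearly independent in $\mathcal{L}=\mathcal{L}_1$; evaluating $\Psi$ at $t=1$ therefore gives $c_0:=\|\Psi(\cdot,1)\|_{\mathcal{L}}>0$.

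The next step is to use the infinite-order vanishing hypothesis \eqref{eq:uniq_cont} to show that $\|v_\lambda(\cdot,1)\|_{\mathcal{L}}\to 0$ as $\lambda\to 0^+$, which will clash with the positive limit $c_0$. Fix an integer $k>\gamma_{m_0,k_0}$; \eqref{eq:uniq_cont} provides $R,s_0,C>0$ with $|u(y,t_0-s)|\leq C(|y|^2+s)^k$ whenever $|y|<R$ and $0<s<s_0$. I would split $\int_{\R^N}|v_\lambda(x,1)|^2 G(x,1)\,dx$ at the sphere $|x|=R/\lambda$. On the inner ball $|x|\leq R/\lambda$, for $\lambda^2<s_0$, the pointwise bound yields an integrand bounded by $C^2\lambda^{4(k-\gamma_{m_0,k_0})}(|x|^2+1)^{2k}G(x,1)$, whose integral over $\R^N$ is finite and is multiplied by a factor tending to $0$. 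On the outer region $|x|>R/\lambda$, the substitution $y=\lambda x$ rewrites the contribution as $\lambda^{-N-4\gamma_{m_0,k_0}}\int_{|y|>R}|u(y,t_0-\lambda^2)|^2 e^{-|y|^2/(4\lambda^2)}\,dy$, and the super-exponentially small prefactor $e^{-R^2/(4\lambda^2)}$ beats all algebraic powers of $\lambda$ provided a uniform-in-$\lambda$ local $L^2$ bound on $u(\cdot,t_0-\lambda^2)$ is extracted from the regularity of $u$ as a weak solution (Definition \ref{def:solution}).

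Combining the two estimates gives $\|v_\lambda(\cdot,1)\|_{\mathcal{L}}\to 0$, while the triangle inequality applied to \eqref{eq:81} at $t=1$ (with any $\tau\in(0,1)$) forces $\|v_\lambda(\cdot,1)\|_{\mathcal{L}}\to c_0>0$, the required contradiction; hence $u\equiv 0$. The step I expect to be most delicate is the tail estimate on $|x|>R/\lambda$: the vanishing hypothesis \eqref{eq:uniq_cont} is purely local near the origin, so to discard that tail against the Gaussian weight one must appeal to the global integrability properties of $u$ built into the weak-solution framework, rather than to \eqref{eq:uniq_cont} itself. Everything else is essentially book-keeping once Theorem \ref{asym1} and the linear-independence statement in Proposition \ref{p:explicit_spectrum} are in hand.
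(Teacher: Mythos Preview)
Your overall strategy—contradict Theorem~\ref{asym1} by showing the blow-up limit must vanish—is exactly the paper's. The paper, however, sidesteps the tail estimate entirely: it observes that for each fixed $(x,t)$ the hypothesis \eqref{eq:uniq_cont} forces $\lambda^{-2\gamma_{m_0,k_0}}u(\lambda x,t_0-\lambda^2 t)\to 0$ pointwise (since $|u(\lambda x,t_0-\lambda^2 t)|\le C\lambda^{2k}(|x|^2+t)^k$ with $k>\gamma_{m_0,k_0}$), and then uses that the $\mathcal L_t$-convergence in \eqref{eq:81} gives a.e.\ convergence along a subsequence to the nontrivial profile, a contradiction. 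No splitting, no outer integral.

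Your tail step, as written, does not close. The region $|y|>R$ is unbounded, so a ``local $L^2$ bound'' on $u(\cdot,t_0-\lambda^2)$ is not what is needed; and the weak-solution framework only places $u(\cdot,t_0-\lambda^2)$ in $\mathcal L_{\lambda^2}$ (weight $e^{-|y|^2/(4\lambda^2)}$), not in a fixed-weight space like $\mathcal L_1$. If you try to pull out a factor $e^{-R^2/(4\lambda^2)}$ you are left with $\int_{|y|>R}|u(y,t_0-\lambda^2)|^2\,dy$ (or a mismatched Gaussian), which is not controlled by Definition~\ref{def:solution}. In fact one can check that the full quantity $\|v_\lambda(\cdot,1)\|_{\mathcal L}^2=\lambda^{-4\gamma_{m_0,k_0}}H(\lambda^2)$ converges to a \emph{positive} limit by Lemmas~\ref{l:limite}--\ref{limite_pos}, so it cannot be shown to vanish; the outer piece is precisely where the mass hides.

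The clean fix within your norm-based approach is to drop the outer estimate altogether: your inner estimate already gives $\int_{B_\rho}|v_\lambda(x,1)|^2G(x,1)\,dx\to 0$ for every fixed $\rho>0$ (take $\lambda<R/\rho$), and since $v_\lambda(\cdot,1)\to\Psi(\cdot,1)$ in $\mathcal L$ the same integral converges to $\int_{B_\rho}|\Psi(x,1)|^2G(x,1)\,dx$. Hence $\Psi(\cdot,1)=0$ a.e., contradicting $c_0>0$.
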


The monotonicity argument developed to prove Theorem
\ref{asym1} yields as a byproduct
the following {\it unique
continuation property} with respect to time.
It can be interpreted as a \emph{backward uniqueness result} for \eqref{prob} in the
spirit of \cite{lax}, see e.g. \cite{kukavica,lees-protter}.

\begin{Proposition}\label{p:uniq_cont}
Let
$N\geq2$,  ${\mathbf{A}}\in C^1({\mathbb{S}}
^{N-1},{\mathbb{R}}^N)$ and $a\in L^{\infty}\big({\mathbb S}^{N-1},\R\big)$ satisfy \eqref{transversality}
and
  \eqref{eq:hardycondition}. Let $u$ be a weak solution to
  \eqref{prob} in $\R^N\times I$ with  $h$ satisfying
  \eqref{eq:der}--\eqref{eq:h} for some bounded interval $I$.
If there exists $t_0\in I$
  such that
$$
u(x,t_0)=0\quad\text{for a.e. }x\in\R^N,
$$
then $u\equiv 0$ in $\R^N\times I$.
\end{Proposition}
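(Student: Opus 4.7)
The set $Z := \{t \in I : u(\cdot, t) \equiv 0 \text{ a.e.}\}$ is closed in $I$ by the $L^2$--continuity of weak solutions, nonempty by hypothesis, and $I$ is connected; it is therefore enough to show that $Z$ is open, i.e.\ to propagate vanishing from any $t_1 \in Z$ both forward and backward in time.

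\emph{Forward direction.} Testing \eqref{prob} against $\bar u$ and using the transversality \eqref{transversality} yields, after taking real parts, the energy identity
\[
\tfrac12\tfrac{d}{dt}\|u(\cdot, t)\|_{L^2}^2 + \int_{\R^N}\!\Big(|\nabla_{\A}u|^2 - \tfrac{a(\theta)}{|x|^2}|u|^2\Big)dx = \Re\!\int_{\R^N}\! h|u|^2\, dx.
\]
Under \eqref{eq:hardycondition} the Dirichlet form on the left is coercive via the magnetic Hardy inequality; the assumption \eqref{eq:h}--\eqref{eq:der} on $h$ together with Sobolev embedding allows the right-hand side to be dominated by $\varepsilon\cdot(\text{Dirichlet form}) + C(t)\|u\|_{L^2}^2$ with $C\in L^1_{\loc}(I)$. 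Absorption and Gronwall then give $\|u(\cdot, t)\|_{L^2}\equiv 0$ on $(t_1, \sup I)\cap I$.

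\emph{Backward direction.} Set $\mathcal H(t):=\|u(\cdot, t)\|_{L^2}^2$ and $\mathcal E(t) := -\Re\!\int \bar u\,u_t\,dx = \int Q_{\A,a}(u) - \int h|u|^2$, with $Q_{\A,a}(u):=|\nabla_\A u|^2 - a(\theta)|u|^2/|x|^2$. Differentiating \eqref{prob} in $t$ (justified by $h_t\in L^\infty_{\loc}(I, L^{N/2})$ from \eqref{eq:der}) and integrating by parts in space we obtain
\[
\mathcal H'(t) = -2\mathcal E(t), \qquad \mathcal E'(t) = -2\|u_t(\cdot,t)\|_{L^2}^2 - \int h_t|u|^2\,dx.
\]
Cauchy--Schwarz gives $\mathcal E^2\le\mathcal H\,\|u_t\|_{L^2}^2$, so wherever $\mathcal H>0$,
\[
(\log\mathcal H)''(t) = \frac{4\bigl(\mathcal H\|u_t\|_{L^2}^2 - \mathcal E^2\bigr)}{\mathcal H^2} + \frac{2\int h_t|u|^2\,dx}{\mathcal H} \ge -C_1(t) - C_2(t)\,|(\log\mathcal H)'(t)|,
\]
with $C_1, C_2\in L^1_{\loc}(I)$. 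Here the perturbation is bounded using $|\int h_t|u|^2|\le \|h_t\|_{L^{N/2}}\|u\|_{L^{2N/(N-2)}}^2$, Sobolev embedding, and the magnetic Hardy inequality under \eqref{eq:hardycondition} to absorb $\|\nabla u\|_{L^2}^2$ into $\mathcal E + \mathcal H$, together with the identity $\mathcal E/\mathcal H = -\tfrac12(\log\mathcal H)'$. Integrating this first-order differential inequality produces a quantitative log-convexity of the form
\[
\mathcal H\bigl((1-\theta)t^* + \theta t_1\bigr) \le e^{K(t^*,t_1)}\,\mathcal H(t^*)^{1-\theta}\,\mathcal H(t_1)^\theta
\]
for $t^*<t_1$ in $I$ and $\theta\in[0,1]$. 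Since $\mathcal H(t_1)=0$, the right-hand side vanishes for every $\theta\in(0,1)$, forcing $\mathcal H\equiv 0$ on $(t^*, t_1)$, and hence on a left neighborhood of $t_1$ by arbitrariness of $t^*$.

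\emph{Main obstacle.} The delicate point is making the logarithmic-convexity computation rigorous in the singular functional framework attached to $\mathcal L_{\A,a}$: the integration by parts leading to $\mathcal H'$ and $\mathcal E'$ must be validated in the weak-solution class, and the $L^{N/2}$-perturbation $\int h_t|u|^2$ must be controlled by the coercive quantities $\mathcal H, \mathcal E$ through magnetic Hardy and Sobolev inequalities — all of which hinge on \eqref{eq:hardycondition}. Once these estimates are secured, the Lax--Lees--Protter--Kukavica style log-convexity argument delivers the backward uniqueness and, together with the forward step, closes the openness of $Z$.
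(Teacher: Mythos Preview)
Your approach is genuinely different from the paper's and has a real gap rooted in the functional framework.

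\medskip
\textbf{The gap.} You work throughout with the unweighted quantities $\mathcal H(t)=\|u(\cdot,t)\|_{L^2(\R^N)}^2$ and $\mathcal E(t)=\int_{\R^N}Q_{\A,a}(u)\,dx-\int_{\R^N}h|u|^2\,dx$. But the weak solutions of Definition~\ref{def:solution} live only in the \emph{Gaussian-weighted} spaces $\mathcal H_t$, $\mathcal L_t$ (norms $\int(\cdots)\,G(x,t)\,dx$ with $G(x,t)=t^{-N/2}e^{-|x|^2/(4t)}$). Since $G$ decays at infinity, membership in $\mathcal L_t$ is strictly weaker than membership in $L^2(\R^N)$: there is no reason for $\|u(\cdot,t)\|_{L^2}$, $\|\nabla u(\cdot,t)\|_{L^2}$, or $\|u_t(\cdot,t)\|_{L^2}$ to be finite, and the energy identity you write down, the Sobolev bound $|\!\int h_t|u|^2|\le\|h_t\|_{L^{N/2}}\|u\|_{L^{2^*}}^2$, and the Cauchy--Schwarz step $\mathcal E^2\le\mathcal H\|u_t\|_{L^2}^2$ are all unjustified in this class. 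Your ``main obstacle'' paragraph flags the need to make things rigorous but does not confront this specific incompatibility; as written, the argument does not apply to the solutions under consideration.

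\medskip
\textbf{What the paper does instead.} The paper never touches unweighted $L^2$. It works with the weighted height $H(t)=\int_{\R^N}|u|^2G(x,t)\,dx$ and the Almgren--Poon frequency $\mathcal N(t)=tD(t)/H(t)$, which are finite by the very definition of weak solution. The key ingredient replacing your log-convexity is the monotonicity of $t\mapsto t^{-2C_1+\frac{N-2}{2}}H_j(t)$ (Lemma~\ref{l:Hcreas}), obtained from Corollary~\ref{c:pos_per}; this immediately gives that $H_j(\bar t)=0$ forces $H_j\equiv0$ on $(0,\bar t]$ (Lemma~\ref{l:Hpos}). Propagation in the remaining direction and across the whole interval is then handled by the boundedness of $\mathcal N_j$ from Lemma~\ref{l:Nabove} and the overlapping-subinterval decomposition, packaged as Proposition~\ref{p:t_0=0}(iii). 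The proof of Proposition~\ref{p:uniq_cont} is then a one-line reduction to that proposition.

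\medskip
If you want to rescue your strategy, you would have to redo the log-convexity computation for the weighted height $H(t)=\int|u|^2G(x,t)\,dx$; but once you differentiate $H$ using \eqref{eq:heatker} you land exactly on the paper's frequency identity \eqref{eq:10i}, and the cleanest way to proceed is precisely the Almgren--Poon monotonicity that the paper develops.
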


Another main goal of this manuscript  is to give an integral  representation
formula for magnetic caloric functions, i.e. for solutions to
\eqref{prob}. 
The free heat forward equation, i.e. \eqref{prob} with
${\mathbf{A}}\equiv 0$, $a\equiv0$ and $h\equiv 0$, can be  considered as the canonical
example of  diffusion equation. A well-known solution to the
 the Cauchy problem
\begin{equation}  \label{eq:calorfree}
\begin{cases}
u_t=\Delta u \\
u(x,0)=f(x)
\end{cases}
\end{equation}
with datum $f\in C^0(\R^N)\cap L^\infty(\R^N)$ 
 is given by the integral formula:
\begin{equation*}
u(x,t)=e^{-t\Delta}f(x):=\frac{1}{(4\pi t)^{\frac N2}} 
\int_{{\mathbb{R}}^N} e^{-\frac{|x-y|^2}{4t}} f(y)\,dy, \quad 
x\in\R^N,\quad  t>0.
\end{equation*}
We also refer to \cite{Djrbashian} for integral representation
formulas for the heat equation in half--spaces.

With the aim of extending integral representation formulas
 to the electromagnetic singular case, the following theorem provides an explicit
representation formula for weak solutions to \eqref{prob}, with $h\equiv
0$ and initial datum $u_0\in \tilde{\mathcal L}$, where
$\tilde{{\mathcal L}}$ is defined as
the completion of $C^{\infty}_{\rm c}(\R^N,\C)$ with respect to
\begin{equation}\label{eq:49}
\|\varphi\|_{\tilde{{\mathcal L}}}=\bigg(\int_{\R^N}|\varphi(x)|^2 e^{\frac{|x|^2}{4}}\,dx\bigg)^{\!\!1/2}.
\end{equation}

\begin{Theorem}\label{t:representation}
Let $u$
be a weak solution (in the sense explained at the beginning of Section \ref{sec:repr-form-solut})
to (\ref{prob})
with $h\equiv 0$ and $u(\cdot,0)=u_0\in
\tilde{\mathcal L}$. Then $u$ admits the following representation:
for all $t> 0$,
\[
  u(x,t)=t^{-\frac{N}{2}}
  \int_{\R^N}u_{0}(y)  K\bigg(\frac{y}{\sqrt{t}},\frac{x}{\sqrt{t}}\bigg) \,dy, 
  \]
where the integral at the right hand side is understood in the sense
of  improper multiple integrals and 
\begin{equation}\label{eq:3}
K(x,y)=
\frac{1}{2(|x||y|)^{\frac{N-2}{2}}}
e^ {-\frac{|x|^2+|y|^2}{4}}
\sum\limits_{k=1}^{\infty
}e^{i \frac\pi2\beta _{k}}\psi _{k}\big(\tfrac{y}{|y|}\big) 
\overline{\psi _{k}\big(\tfrac{x}{|x|}\big)}J_{\beta _{k}}\big(  \tfrac{-i|x||y|
}{2}\big)
\end{equation}
being $\beta_k$ as in \eqref{eq:alfabeta}, $\psi_k$ as in
\eqref{angular}, and being $J_{\beta _{k}}$   the Bessel function of
the first kind of order 
$\beta _{k}$.
  \end{Theorem}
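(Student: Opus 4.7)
The plan is to exploit the diagonalisation of the angular part of $\mathcal{L}_{\A,a}$ by the eigenfunctions $\{\psi_k\}_{k\ge 1}$ of $T_{\A,a}$ from \eqref{angular}, so that \eqref{prob} with $h\equiv 0$ decouples into a countable family of one-dimensional radial heat equations with inverse-square potential, each of which admits an explicit Bessel-type heat kernel.

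First, I would expand the initial datum and the candidate solution along $\{\psi_k\}$,
\[
u_0(r\theta)=\sum_{k\ge 1}\phi_k(r)\psi_k(\theta),\qquad u(r\theta,t)=\sum_{k\ge 1}v_k(r,t)\psi_k(\theta),
\]
with $\phi_k(r)=\int_{\SN}u_0(r\omega)\overline{\psi_k(\omega)}\,dS(\omega)$. Since in polar coordinates $\mathcal{L}_{\A,a}=-\partial_r^2-\tfrac{N-1}{r}\partial_r+r^{-2}T_{\A,a}$ and $T_{\A,a}\psi_k=\mu_k(\A,a)\psi_k$, each coefficient must solve
\[
\partial_t v_k=\partial_r^2 v_k+\frac{N-1}{r}\partial_r v_k-\frac{\mu_k(\A,a)}{r^2}v_k,\qquad v_k(\cdot,0)=\phi_k.
\]
Conjugating through $w_k:=r^{(N-2)/2}v_k$ reduces the radial operator to the standard Bessel-type heat operator $\partial_t-\partial_r^2-\tfrac{1}{r}\partial_r+\tfrac{\beta_k^2}{r^2}$ on $(0,\iy)$ with centrifugal parameter $\beta_k$ as in \eqref{eq:alfabeta}, whose heat kernel on $L^2((0,\iy),r\,dr)$ is the classical modified-Bessel expression $\frac{1}{2t}\,e^{-(r^2+s^2)/(4t)}\,I_{\beta_k}(rs/(2t))$. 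Pulling back the conjugation gives
\[
v_k(r,t)=\int_0^\iy \frac{(rs)^{-(N-2)/2}}{2t}\,e^{-\frac{r^2+s^2}{4t}}\,I_{\beta_k}\!\Big(\frac{rs}{2t}\Big)\phi_k(s)\,s^{N-1}\,ds.
\]

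Next I would reassemble: substituting the definition of $\phi_k$ and swapping the summation in $k$ with the integration over $y=s\omega\in\R^N$ produces
\[
u(x,t)=\frac{1}{2t}\int_{\R^N}u_0(y)\,(|x||y|)^{-(N-2)/2}\,e^{-\frac{|x|^2+|y|^2}{4t}}\sum_{k\ge 1}\psi_k\!\Big(\tfrac{x}{|x|}\Big)\overline{\psi_k\!\Big(\tfrac{y}{|y|}\Big)}\,I_{\beta_k}\!\Big(\tfrac{|x||y|}{2t}\Big)\,dy.
\]
Invoking the identity $I_\nu(z)=e^{i\pi\nu/2}J_\nu(-iz)$ to pass from modified to ordinary Bessel functions and performing the change of variables $\tilde x=x/\sqrt t$, $\tilde y=y/\sqrt t$ puts this formula exactly into the form $u(x,t)=t^{-N/2}\int_{\R^N}u_0(y)K(y/\sqrt t,x/\sqrt t)\,dy$ with $K$ as in \eqref{eq:3}.

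The hard part will be the rigorous justification of the manipulations above: absolute convergence of the series defining $K$, legitimacy of Fubini for the sum-integral interchange, and identification of the explicit formula with the weak solution. I expect the convergence to be controlled through (i) Weyl-type asymptotics $\mu_k(\A,a)\sim c_N k^{2/(N-1)}$ for the angular eigenvalues (and hence a corresponding growth of $\beta_k$) together with $L^\iy(\SN)$ bounds on $\psi_k$ obtained from elliptic regularity on the sphere, (ii) the classical estimate $|I_\nu(z)|\le(z/2)^\nu e^{z}/\Gamma(\nu+1)$, which yields super-exponential decay in $k$ once combined with the Gaussian factor $e^{-(|x|^2+|y|^2)/(4t)}$, and (iii) the weight $e^{|y|^2/4}$ built into $\tilde{\mathcal{L}}$, which couples with the Gaussian to produce an integrable bound in $y$ for every $t>0$. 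Coincidence of the explicit formula with the given weak solution should then follow from uniqueness for the Cauchy problem in the appropriate energy class, itself a consequence of the positivity of the quadratic form of $\mathcal{L}_{\A,a}$ granted by the subcriticality condition \eqref{eq:hardycondition} and a standard Gronwall-type energy estimate.
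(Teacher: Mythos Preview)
Your approach is correct and genuinely different from the paper's. The paper does \emph{not} decompose only along the angular eigenfunctions $\{\psi_k\}$; instead it performs the self-similar change of variables $\varphi(x,t)=u(\sqrt{1+t}\,x,t)$, which converts \eqref{prob} into $\varphi_t=-\frac{1}{1+t}L^-_{\A,a}\varphi$, and then expands $\varphi$ in the full orthonormal basis $\{\widetilde U_{m,k}\}$ of $\tilde{\mathcal L}$ consisting of eigenfunctions of the Ornstein--Uhlenbeck type operator $L^-_{\A,a}$ (these are Laguerre polynomials in the radial variable times the $\psi_k$'s, see Remark~\ref{rem:tilde}). The Fourier coefficients then satisfy trivial ODEs, and the bulk of the work is collapsing the resulting double sum into the kernel $K$ via an integral representation of Laguerre polynomials in terms of Bessel functions and two applications of a Weber--Schafheitlin type formula from Watson's treatise. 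Your route is considerably more direct: the one-dimensional Bessel heat kernel $\frac{1}{2t}e^{-(r^2+s^2)/(4t)}I_{\beta_k}(rs/(2t))$ is classical, so you avoid the Laguerre machinery and the special-function manipulations entirely. What the paper's approach buys is that the weak solution notion is \emph{defined} through the transformed equation \eqref{varphieq}, so the identification of the series with the given solution is automatic (convergence holds in $\tilde{\mathcal L}$ by Parseval, see \eqref{eq:64}); you instead need the uniqueness argument you outline at the end. The paper's route also ties the representation formula to the spectral analysis of $L_{\A,a}$ developed in Section~\ref{sec:spectr-analys-self}, which is thematically central to the article, whereas your argument is self-contained but does not exhibit that connection.
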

In the proof of Theorem \ref{t:representation},
the critical homogeneities and the transversality condition
\eqref{transversality} play a fundamental role. Indeed  Theorem
\ref{t:representation}  is proved by recasting
equation \eqref{prob} with $h=0$ in terms of parabolic
self-similar variables (see transformation \eqref{varphi}) thus
obtaining an equivalent parabolic equation with an Ornstein-Uhlenbeck type 
operator with singular homogeneous electromagnetic
potentials (see \eqref{varphieq}). Then a representation formula
 is obtained by expanding the transformed solution in Fourier
series with respect to an orthonormal basis given by eigenfunctions of
the
 Ornstein-Uhlenbeck type 
operator (see Remark \ref{rem:tilde} for the description of such a basis).

The present paper is organized as follows. In section
\ref{sec:weak-form-probl} we give a weak formulation of problem
\eqref{prob}. In section \ref{sec:parabolic-hardy-type} we present 
some magnetic parabolic Hardy type
  inequalities and weighted Sobolev embeddings. Section
  \ref{sec:spectr-analys-self} is devoted to the description 
of the spectrum of the operator $L_{\A,a}$ defined in \eqref{eq:13}
and to the proof of Proposition \ref{p:explicit_spectrum}.
The parabolic monotonicity argument developed in section
\ref{sec:almgren} together with the blow-up analysis of section
\ref{sec:blow-up-analysis} allow proving Theorem \ref{asym1} at the
end of section
\ref{sec:blow-up-analysis}. Finally, section \ref{sec:repr-form-solut}
contains the proof of the representation formula stated in Theorem~\ref{t:representation}.

\medskip
\noindent
{\bf Notation. } We list below some notation used throughout the
paper.\par
\begin{itemize}
\item[-] ${\rm const}$ denotes some positive constant which may vary
  from formula to formula.
\item[-] $dS$ denotes the volume element on the unit
  $(N-1)$-dimensional sphere ${\mathbb S}^{N-1}$.
\item[-] $\omega_{N-1}$ denotes the volume of  ${\mathbb
  S}^{N-1}$, i.e. $\omega_{N-1}=\int_{{\mathbb S}^{N-1}}dS(\theta)$.
\item[-] For every complex number $z$ we denote as $\Re(z)$ its real part.
\end{itemize}

\section{The weak formulation of the problem}\label{sec:weak-form-probl}
The functional space $\mathcal H_t$ defined in \eqref{eq:star} is
related to the weighted magnetic Sobolev space ${\mathcal H}^{\A}_t$
 defined,  for every
$t>0$, as
the completion of $C^{\infty}_{\rm c}(\R^N\setminus \{0\},\C)$ with
respect to the norm 
\[
\|u\|_{{\mathcal H}_t^\A}=\bigg(\int_{\R^N}\big(t|\nabla\!_\A
u(x)|^2+|u(x)|^2\big) G(x,t)\,dx\bigg)^{\!\!1/2},
\]
where $\nabla_{{\mathbf{A}}}u= \nabla u+i\frac {{\mathbf{A}}(x/|x|)}{
|x|}u$.
For $\A\equiv 0$ we recover the Hilbert Space 
\[
{H}_t:={\mathcal H}_t^0
\]
 described in \cite{FP}. 
From \cite[Proposition 3.1]{poon} (see Lemma \ref{Hardytemp} in
section \ref{sec:parabolic-hardy-type})
it follows that 
\[
\text{if $N\geq 3$ then  ${\mathcal{H}_t
}={{H}_t }=\mathcal H^0_t$ and the norms
$\|\cdot\|_{\mathcal{H}_t}$, $\|\cdot\|_{{H}_t}$ are
equivalent.}
\]
On the other hand, if $N=2$ we have that ${\mathcal{H}_t
}\subset{{H}_t }$ and ${\mathcal{H}_t
}\not={{H}_t }$. 

We also notice that $\mathcal{H}_t$ is continuously embedded into
  ${\mathcal H}_t^\A$; a detailed comparison between spaces
  $\mathcal{H}_t$ and ${\mathcal H}_t^\A$ will be performed in
    section \ref{sec:parabolic-hardy-type}.

We denote as ${\mathcal H}_t^\star$ the dual space of
${\mathcal H}_t$ and by ${}_{{\mathcal H}_t^\star}\langle
\cdot,\cdot\rangle_{{\mathcal H}_t}$ the corresponding duality product.

In order to prove Theorem \ref{asym1}, 
up to a translation it is not restrictive to assume that 
\[
t_0=0.
\]
Furthermore, to simplify notations and work with positive times $t$, 
it is convenient to perform the change of variable $(x,t)\mapsto
(x,-t)$. Indeed, if $u(x,t)$ is solution to (\ref{prob}) in $\R^N\times
(-T,0)$, then $\tilde u(x,t)= u(x,-t)$ solves
\begin{equation}\label{probtilde}
-\tilde{u}_t(x,t)+{\mathcal{L}}_{{\mathbf{A}},a} \tilde{u} (x,t) = h(x,-t) \tilde{u}(x,t) \inn  \R^N\times (0,T).
\end{equation}
\begin{Definition}\label{def:solution}
  We say that $\tilde{u}\in L^1_{\rm loc
  }(\R^N\times(0,T))$ is a weak solution to
  (\ref{probtilde}) in $\R^N\times(0,T)$ if
\begin{align}
  \label{eq:defsol1}&\int_\tau^T\|\tilde{u}(\cdot,t)\|^2_{{\mathcal
      H}_t}\,dt<+\infty,\quad\int_\tau^T\Big\|\tilde{u}_t+\frac{\nabla_\A \tilde{u}\cdot
                      x}{2t}\Big\|^2_{{\mathcal H}_t^\star}dt<+\infty \text{ for all
  }\tau\in (0,T),\\
\label{eq:defsol2}&{\phantom{\bigg\langle}}_{{\mathcal
    H}_t^\star}\bigg\langle
\tilde{u}_t+\frac{\nabla_\A \tilde{u}\cdot x}{2t},\phi
\bigg\rangle_{{\mathcal H}_t}\\
&\notag\qquad=
\int_{\R^N}\bigg(\nabla_\A \tilde{u}(x,t)\cdot \overline{\nabla_\A \phi(x)}-
\dfrac{a(x/|x|)}{|x|^2}\,\tilde{u}(x,t) \overline{\phi(x)}-h(x,-t)\tilde{u}(x,t) \overline{\phi(x)}\bigg)G(x,t)\,dx
\end{align}
for a.e. $t\in (0,T)$ and for each $\phi\in {\mathcal
    H}_t$.
\end{Definition}

\begin{remark}\label{rem:uv}
In view of \eqref{transversality} we have that $\nabla_\A \tilde
u\cdot x=\nabla\tilde u\cdot x$. Therefore, if $\tilde{u}\in L^1_{\rm loc }(\R^N\times(0,T))$ satisfies (\ref{eq:defsol1}), then
the function
\begin{equation}\label{eq:5}
v(x,t):=\tilde{u}(\sqrt{t}x,t)
\end{equation}
satisfies
\begin{equation}\label{eq:4}
v\in L^2(\tau,T;\mathcal H)\quad\text{and}\quad
v_t\in L^2(\tau,T;\mathcal H^\star) \quad\text{for all
  }\tau\in (0,T),
\end{equation}
where $\mathcal H:={\mathcal H}_1$ is defined in \eqref{eq:Hstar}.
From (\ref{eq:4}) it follows that $v\in C^0([\tau,T],{\mathcal L})$
(see e.g. \cite{SH}),
being $\mathcal L:={\mathcal L}_1$, i.e. $\mathcal L$ is 
the completion of $C^{\infty}_{\rm c}(\R^N,\C)$ with respect to the norm
$\|v\|_{{\mathcal L}}=\big(\int_{\R^N}|v(x)|^2e^{-|x|^2/4}\,dx\big)^{1/2}$.
Moreover the function
$$
t\in[\tau,T]\mapsto \|v(t)\|^2_{{\mathcal L}}=\int_{\R^N}|\tilde u(x,t)|^2G(x,t)\,dx
$$
is absolutely continuous  and
\begin{align*}
\frac12\frac1{dt} \int_{\R^N}|\tilde u(x,t)|^2G(x,t)&=
\frac12\frac1{dt}\|v(t)\|_{{\mathcal L}}^2=
\Re\Big[{}_{{\mathcal H}^\star}\langle
v_t(\cdot,t),v(\cdot,t)
\rangle_{{\mathcal H}}\Big]\\
&=\Re\bigg[\!\!\!{\phantom{\bigg\langle}}_{{\mathcal
    H}_t^\star\!\!}\bigg\langle
\tilde u_t+\frac{\nabla_\A \tilde u\cdot x}{2t},\tilde u(\cdot,t)
\bigg\rangle_{{\mathcal H}_t}\bigg]
\end{align*}
for a.e. $t\in(0,T)$.
\end{remark}
\begin{remark}\label{rem:v2}
  If $u$ is a weak solution to (\ref{prob}) in the sense of definition
  \ref{def:solution}, then the function $v(x,t):=\tilde{u}(\sqrt{t}x,t)$
  defined in \eqref{eq:5} is  a weak solution to
\begin{equation*}
  v_t+\frac1t\bigg(-{\mathcal{L}}_{{\mathbf{A}},a}  v-\frac x2\cdot \nabla v+t h(\sqrt tx, -t)v\bigg)=0,
\end{equation*}
in the sense that, for every $\phi\in{\mathcal H}$,
\begin{multline}\label{eq:24}
{\phantom{\big\langle}}_{{\mathcal H}^\star}\!\big\langle v_t,\phi \big\rangle_{{\mathcal H}}\\=
\frac1t\int_{\R^N}\!\!\bigg(\!\nabla_\A v(x,t)\!\cdot \!\overline{\nabla_\A \phi(x)}-
\dfrac{a\big(\frac{x}{|x|}\big)}{|x|^2}\,v(x,t) \overline{\phi(x)} -t\,h(\sqrt
tx,-t)v(x,t) \overline{\phi(x)}\!\bigg)G(x,1)\,dx.
\end{multline}
\end{remark}

\section{Magnetic parabolic Hardy type
  inequalities and weighted Sobolev embeddings}\label{sec:parabolic-hardy-type}
The following Hardy type inequality for parabolic
operators was proved in \cite[Proposition 3.1]{poon}.

\begin{Lemma} \label{Hardytemp}
  For every $t>0$, $N\geq 3$ and $u\in {H}_t={\mathcal H}^0_t$ there holds
$$
\int_{\R^N}\dfrac{|u(x)|^2}{|x|^2}\,G(x,t)\,dx\leq
\dfrac{1}{(N-2)t}\dint_{\R^N}|u(x)|^2G(x,t)\,dx+
\dfrac{4}{(N-2)^2}\dint_{\R^N}|\nabla
u(x)|^2\,G(x,t)\,dx.
$$
\end{Lemma}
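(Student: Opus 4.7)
The plan is to proceed by testing with a suitably chosen vector field and integrating by parts against the Gaussian weight, following the standard strategy that produces Hardy inequalities with sharp constants, while tracking the extra term arising from $\nabla G$. By a standard density argument it suffices to establish the inequality for $u\in C^\infty_{\mathrm c}(\R^N\setminus\{0\},\C)$, and then extend to the whole space $H_t={\mathcal H}_t^0$ by passing to the limit.

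First I would compute, using \eqref{eq:heatker} and the fact that $\mathrm{div}(x/|x|^2)=(N-2)/|x|^2$ away from the origin,
\begin{equation*}
\dive\!\left(\frac{x}{|x|^2}G(x,t)\right)=\frac{N-2}{|x|^2}G(x,t)+\frac{x}{|x|^2}\cdot\nabla G(x,t)=\frac{N-2}{|x|^2}G(x,t)-\frac{1}{2t}G(x,t).
\end{equation*}
Multiplying this identity by $|u(x)|^2$, integrating over $\R^N$ and integrating by parts (the boundary terms at $0$ and at $\infty$ vanish since $u$ is compactly supported away from the origin), I would obtain
\begin{equation*}
(N-2)\int_{\R^N}\frac{|u(x)|^2}{|x|^2}G(x,t)\,dx=-2\,\Re\int_{\R^N}\overline{u(x)}\,\nabla u(x)\cdot\frac{x}{|x|^2}G(x,t)\,dx+\frac{1}{2t}\int_{\R^N}|u(x)|^2 G(x,t)\,dx.
\end{equation*}

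Next I would estimate the cross term by the pointwise bound $|\overline u\,\nabla u\cdot x/|x|^2|\leq |u||\nabla u|/|x|$ followed by Young's inequality $2ab\leq \lambda a^2+\lambda^{-1}b^2$ with $a=|u|/|x|$, $b=|\nabla u|$ and the optimal choice $\lambda=(N-2)/2$; this yields
\begin{equation*}
(N-2)\int_{\R^N}\frac{|u|^2}{|x|^2}G\,dx\leq \frac{N-2}{2}\int_{\R^N}\frac{|u|^2}{|x|^2}G\,dx+\frac{2}{N-2}\int_{\R^N}|\nabla u|^2 G\,dx+\frac{1}{2t}\int_{\R^N}|u|^2 G\,dx.
\end{equation*}
Absorbing the first right-hand side term into the left and dividing by $(N-2)/2$ gives exactly the stated inequality; the parameter $\lambda=(N-2)/2$ is precisely what makes the coefficient $\frac{1}{2t(N-2-\lambda)}$ equal to $\frac{1}{(N-2)t}$ and the coefficient $\frac{1}{\lambda(N-2-\lambda)}$ equal to $\frac{4}{(N-2)^2}$, matching both sides of the claim.

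Finally I would extend to arbitrary $u\in H_t$ by approximating with $\{u_n\}\subset C^\infty_{\mathrm c}(\R^N\setminus\{0\},\C)$ converging to $u$ in the $H_t$-norm; passing to the limit on the right-hand side is immediate, and for the left-hand side one applies Fatou's lemma to a further subsequence converging almost everywhere. The only subtle point in the whole argument is the justification of the integration by parts near the origin, which is trivial on $C^\infty_{\mathrm c}(\R^N\setminus\{0\})$ and is the reason why the density argument is stated through that class; the resulting inequality is itself what allows the extension to $H_t$.
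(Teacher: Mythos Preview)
Your argument is correct. The paper does not actually prove this lemma: it simply records the inequality as \cite[Proposition 3.1]{poon} and uses it as a black box. Your proof supplies a clean self-contained derivation via the vector field $x/|x|^2$, integration by parts against the Gaussian weight (which produces exactly the extra $\frac{1}{2t}$ term from $\nabla G=-\frac{x}{2t}G$), and the choice $\lambda=(N-2)/2$ in Young's inequality; the density step through $C^\infty_{\mathrm c}(\R^N\setminus\{0\},\C)$ and Fatou's lemma is the right way to reach all of $H_t=\mathcal H_t^0$, since by definition that space is the completion of this class in a norm not containing the Hardy term. So compared with the paper you have added content rather than reproduced it.
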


In order to compare the space ${\mathcal{H}}_t$ with the magnetic
  space ${\mathcal{H}^{\A}_t}$, we recall the well-known
    \emph{diamagnetic inequality}:  for all
$u\in \mathcal H^\A_t$,
\begin{equation}\label{eq:9}
|\nabla |u|(x)|\leq
\big|\nabla_\A u(x)\big|, \quad \text{for a.e. }  x\in \R^N.
\end{equation}
Lemma \ref{Hardytemp} and the diamagnetic inequality \eqref{eq:9}
easily imply that 
\[
\text{if $N\geq 3$ and ${\mathbf{A}}\in C^1({\mathbb{S}}
  ^{N-1},{\mathbb{R}}^N)$ then } \mathcal H^\A_t= H_t=\mathcal H_t\text{
  for all }t>0.
\]
The following lemma extends the Hardy type inequality of Lemma
\ref{Hardytemp} to the electromagnetic case; we notice that the
presence of an  electromagnetic potential satisfying the positivity
condition \eqref{eq:hardycondition} allows recovering a Hardy
inequality even in dimension $2$. 
\begin{Lemma}\label{Hardy_aniso}
Let  $N\geq 2$, $a\in L^{\infty}\big({\mathbb S}^{N-1},\R\big)$ and
 let ${\mathbf{A}}\in C^1({\mathbb{S}} ^{N-1},{\mathbb{R}}^N)$ satisfy
  the  transversality condition \eqref{transversality}. For every $t>0$ and
  $u\in {\mathcal H}_t$, there holds
\begin{multline*}
\int_{\R^N}\bigg(|\nabla_{\A}
u(x)|^2-\frac{a(x/|x|)}{|x|^2}\,|u(x)|^2\bigg)
\,G(x,t)\,dx+\frac{N-2}{4t}\int_{\R^N}|u(x)|^2G(x,t)\,dx\\
\geq \bigg(\mu_1(\A,a)+\frac{(N-2)^2}4\bigg)
\int_{\R^N}\dfrac{|u(x)|^2}{|x|^2}\,G(x,t)\,dx.
\end{multline*}
\end{Lemma}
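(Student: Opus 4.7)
The plan is to reduce the statement to a purely radial Gaussian-weighted Hardy inequality via a separation of variables argument, exploiting the transversality condition \eqref{transversality}. I would work first with $u\in C^{\infty}_{\mathrm c}(\R^N\setminus\{0\},\C)$, the general case $u\in\mathcal{H}_t$ following by density from \eqref{eq:star}.

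Writing $x=r\theta$ with $r=|x|$ and $\theta\in\SN$, and decomposing $\nabla u=u_r\theta+r^{-1}\nabla_{\SN}u$, the transversality identity $\mathbf{A}(\theta)\cdot\theta=0$ gives the pointwise splitting
\[
|\nabla_{\mathbf{A}}u(r\theta)|^2=|u_r(r\theta)|^2+\frac{1}{r^2}\big|\big(\nabla_{\SN}+i\mathbf{A}(\theta)\big)u(r\theta)\big|^2.
\]
Passing to polar coordinates, the variational characterization \eqref{firsteig} applied for each fixed $r>0$ to the function $\theta\mapsto u(r\theta)$ on $\SN$ yields
\[
\int_{\SN}\Big(\big|(\nabla_{\SN}+i\mathbf{A})u(r\theta)\big|^2-a(\theta)|u(r\theta)|^2\Big)\,dS(\theta)\geq \mu_1(\mathbf{A},a)\int_{\SN}|u(r\theta)|^2\,dS(\theta).
\]
Integrating in $r$ against $r^{-2}G(r\theta,t)\,r^{N-1}$ already produces the term $\mu_1(\mathbf{A},a)\int|u|^2\,G/|x|^2\,dx$ on the right hand side, reducing the problem to the radial Hardy-type estimate
\begin{equation}\label{eq:radialHardy-plan}
\int_{\R^N}|u_r|^2\,G\,dx+\frac{N-2}{4t}\int_{\R^N}|u|^2\,G\,dx\geq\frac{(N-2)^2}{4}\int_{\R^N}\frac{|u|^2}{|x|^2}\,G\,dx.
\end{equation}

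For $N=2$ both the second term on the left and the right hand side of \eqref{eq:radialHardy-plan} vanish identically, so nothing is to prove. For $N\geq3$, I would establish \eqref{eq:radialHardy-plan} by the standard \emph{completing the square} trick: expand the nonnegative quantity
\[
0\leq\int_0^{\infty}\int_{\SN}\bigg|u_r+\frac{N-2}{2r}u\bigg|^2 G\,r^{N-1}\,dS\,dr,
\]
integrate by parts in $r$ the cross term $\tfrac{N-2}{2r}(|u|^2)_r$ (boundary contributions at $r=0$ and $r=\infty$ vanish on $C^\infty_{\mathrm c}(\R^N\setminus\{0\})$), and use the identity $\p_r G=-\tfrac{r}{2t}G$ coming from \eqref{eq:heatker}. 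A direct computation then reproduces exactly \eqref{eq:radialHardy-plan}, the term $\tfrac{N-2}{4t}\int|u|^2 G\,dx$ arising precisely from the Gaussian derivative.

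The main technical point to handle carefully is the density argument, since functions in $\mathcal{H}_t$ need not be smooth nor vanish near the origin; however the norm \eqref{eq:star} already controls $\int|u|^2G/|x|^2\,dx$ and, together with $a\in L^\infty(\SN)$ and $\mathbf{A}\in C^1(\SN,\R^N)$, guarantees that every term in the inequality is continuous with respect to $\|\cdot\|_{\mathcal{H}_t}$, so the estimate extends to all of $\mathcal{H}_t$ by approximation. An alternative route through Lemma \ref{Hardytemp} combined with the diamagnetic inequality \eqref{eq:9} would be insensitive to the sign of $a$ and would miss the sharp constant $\mu_1(\mathbf{A},a)+(N-2)^2/4$, which is precisely the feature that makes this lemma useful in dimension $N=2$.
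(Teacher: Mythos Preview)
Your proposal is correct and follows essentially the same architecture as the paper: reduce to $C^\infty_{\mathrm c}(\R^N\setminus\{0\},\C)$ by density, use the transversality condition to split $|\nabla_{\mathbf A}u|^2$ into radial and tangential parts in polar coordinates, bound the tangential part via the variational characterization of $\mu_1(\mathbf A,a)$, and handle the radial part by a Gaussian-weighted Hardy inequality (trivial when $N=2$).

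The only difference is in how the radial estimate \eqref{eq:radialHardy-plan} is obtained for $N\geq 3$. The paper applies Lemma~\ref{Hardytemp} (Poon's parabolic Hardy inequality) to the radially symmetric function $\widetilde\varphi_\theta(x)=u(|x|\theta)$ for each fixed $\theta\in\mathbb S^{N-1}$ and then integrates over the sphere, whereas you derive it directly by expanding $\big|u_r+\tfrac{N-2}{2r}u\big|^2\geq 0$ and integrating by parts using $\partial_r G=-\tfrac{r}{2t}G$. Your route is slightly more self-contained (it does not rely on the cited result of Poon), while the paper's route makes explicit that \eqref{eq:radialHardy-plan} is nothing but Lemma~\ref{Hardytemp} restricted to radial functions; both yield the identical sharp constant.
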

\begin{pf}
Let $u\in C^\infty_{\rm
  c}(\R^N\setminus\{0\},\C)$.  The magnetic gradient of $u$ can be
written in polar coordinates as
\begin{displaymath}
  \nabla_\A u(x)= \left(\nabla +i\dfrac{\A(\theta)}{|x|}\right)u=\big(\partial_ru(r\theta)\big)\theta+
  \frac 1r\nabla_{{\mathbb S}^{N-1}}u(r\theta)+i\dfrac{\A(\theta)}{r} u(r\theta),
\quad r=|x|,\  \theta=\frac{x}{|x|},
\end{displaymath}
hence, in view of \eqref{transversality},
\begin{displaymath}
     |\nabla_\A u(x)|^2
=\big|\partial_ru(r\theta)\big|^2+
\frac1{r^2}\big|(\nabla_{{\mathbb S}^{N-1}}+i\A)u(r\theta)
\big|^2.
  \end{displaymath}
Hence
\begin{multline}\label{eq:1}
  \int_{\R^N}\bigg(|\nabla_\A
  u(x)|^2-\frac{a(x/|x|)}{|x|^2}\,|u(x)|^2\bigg) \,G(x,t)\,dx\\=
  t^{-\frac N2}\int_{{\mathbb S}^{N-1}}\bigg(
  \int_0^{+\infty}r^{N-1}e^{-\frac{r^2}{4t}}
  |\partial_ru(r\theta)|^2\,dr\bigg)\,dS(\theta)\\
  +t^{-\frac N2}
  \int_0^{+\infty}\frac{r^{N-1}e^{-\frac{r^2}{4t}}}{r^2}\bigg(\int_{{\mathbb
      S}^{N-1}}\left[|(\nabla_{{\mathbb
        S}^{N-1}}+i\A)u(r\theta)|^2-a(\theta)
    |u(r\theta)|^2\right]\,dS(\theta)\bigg)\,dr.
\end{multline}
For all $\theta\in{\mathbb S}^{N-1}$, let $\varphi_{\theta}\in
C^\infty_{\rm c}((0,+\infty),\C)$ be defined as
$\varphi_{\theta}(r)=u(r\theta)$, and $\widetilde\varphi_{\theta}\in
C^\infty_{\rm c}(\R^N\setminus\{0\},\C)$ be the radially symmetric
function given by
$\widetilde\varphi_{\theta}(x)=\varphi_{\theta}(|x|)$.
If $N\geq3$, from Lemma \ref{Hardytemp}, it follows that
\begin{align}\label{eq:2b}
  t^{-\frac N2} \int_{{\mathbb S}^{N-1}}&\bigg(
  \int_0^{+\infty}r^{N-1}e^{-\frac{r^2}{4t}}
  |\partial_ru(r\theta)|^2\,dr\bigg)\,dS(\theta)\\
  \notag& = t^{-\frac
    N2}\int_{{\mathbb S}^{N-1}}\bigg( \int_0^{+\infty}r^{N-1}
  e^{-\frac{r^2}{4t}}|\varphi_{\theta}'(r)|^2\,dr\bigg)\,dS(\theta)\\
  \notag& =\frac1{\omega_{N-1}} \int_{{\mathbb S}^{N-1}}\bigg(
  \int_{\R^N}|\nabla\widetilde
  \varphi_{\theta}(x)|^2G(x,t)\,dx\bigg)\,dS(\theta) \\
  &\notag\geq \frac1{\omega_{N-1}} \frac{(N-2)^2}{4} \int_{{\mathbb
      S}^{N-1}}\bigg(\int_{\R^N}\frac{|\widetilde
    \varphi_{\theta}(x)|^2}{|x|^2}G(x,t)\,dx\bigg)\,dS(\theta)\\
  &\notag \quad-\frac1{\omega_{N-1}}\frac{N-2}{4t}\int_{{\mathbb
      S}^{N-1}}\bigg(\int_{\R^N}|\widetilde
  \varphi_{\theta}(x)|^2G(x,t)\,dx\bigg)\,dS(\theta)
  \\
  \notag&=t^{-\frac N2}\frac{(N-2)^2}4 \int_{{\mathbb S}^{N-1}}\bigg(
  \int_0^{+\infty}\frac{r^{N-1}e^{-\frac{r^2}{4t}}}{r^2}|u(r\theta)|^2\,dr\bigg)
  \,dS(\theta)\\
  \notag&-t^{-\frac N2}\frac{N-2}{4t}\int_{{\mathbb S}^{N-1}}\bigg(
  \int_0^{+\infty}r^{N-1}e^{-\frac{r^2}{4t}}|u(r\theta)|^2\,dr\bigg)\,dS(\theta)\\
  \notag&
  =
\frac{(N-2)^2}4
\int_{\R^N}\dfrac{|u(x)|^2}{|x|^2}\,G(x,t)\,dx-
\frac{N-2}{4t}\int_{\R^N}|u(x)|^2G(x,t)\,dx,
\end{align}
where $\omega_{N-1}$ denotes the volume of the unit sphere ${\mathbb
  S}^{N-1}$, i.e. $\omega_{N-1}=\int_{{\mathbb
    S}^{N-1}}dS(\theta)$. If $N=2$ we have
trivially that 
\begin{equation}\label{eq:10}
  t^{-\frac N2} \int_{{\mathbb S}^{N-1}}\bigg(
  \int_0^{+\infty}r^{N-1}e^{-\frac{r^2}{4t}}
  |\partial_ru(r\theta)|^2\,dr\bigg)\,dS(\theta)\geq0.
\end{equation}
On the other hand, from the definition of $\mu_1(\A,a)$ it follows
that
\begin{equation} \label{eq:3b}
  \int_{{\mathbb
      S}^{N-1}}\!\!\left[|(\nabla_{{\mathbb S}^{N-1}}+i\A(\theta))u(r\theta)|^2\!-
a(\theta)|u(r\theta)|^2\right]dS(\theta)
  \geq \mu_1(\A,a)\int_{{\mathbb S}^{N-1}}\!|u(r\theta)|^2dS(\theta).
\end{equation}
From (\ref{eq:1}), (\ref{eq:2b}), \eqref{eq:10}, and (\ref{eq:3b}), we
deduce the stated inequality 
for all $u\in C^\infty_{\rm
    c}(\R^N\setminus\{0\},\C)$. The conclusion follows by  density of $C^\infty_{\rm c}(\R^N\setminus\{0\},\C)$
in ${\mathcal H}_t$.~\end{pf}

Lemma \ref{Hardy_aniso} allows extending the Hardy type inequality of
Lemma \ref{Hardytemp} to the case $N=2$ in the presence of a vector
potential satisfying a suitable non-degeneracy condition. Indeed 
Lemma \ref{Hardy_aniso} implies that, if  $N=2$, $t>0$ and
  $u\in {\mathcal H}_t$, then
\begin{equation}\label{eq:12}
\int_{\R^N}|\nabla_{\A}
u(x)|^2\,G(x,t)\,dx\geq \mu_1(\A,0)
\int_{\R^N}\dfrac{|u(x)|^2}{|x|^2}\,G(x,t)\,dx,
\end{equation}
thus giving a  Hardy type inequality if $\mu_1(\A,0)>0$. As observe in
\cite[Section 2]{FFT}, the condition  $\mu_1(\A,0)>0$ holds if and
only if 
\begin{equation}  \label{eq:circuit}
\Phi_{\mathbf{A}}:=\frac1{2\pi}\int_0^{2\pi}\alpha(t)\,dt \not\in{\mathbb{Z}}%
,\quad \text{where }\alpha(t):={\mathbf{A}}(\cos t,\sin t)\cdot(-\sin t,\cos
t).
\end{equation}
Condition \eqref{eq:circuit} is related the following Hardy
inequality
 proved in \cite{lw}:
\begin{equation}  \label{eq:hardyN2}
\Big(\min_{k\in{\mathbb{Z}}}|k-\Phi_{\mathbf{A}}|\Big)^2\int_{{\mathbb{R}}^2}%
\frac{|u(x)|^2}{|x|^2}\,dx \leq \int_{{\mathbb{R}}^2}\bigg|\nabla u(x)+i\,
\frac{{\mathbf{A}}\big({x}/{|x|}\big)} {|x|}\,u(x)\bigg|^2\,dx
\end{equation}
which holds for all functions $u\in C^\infty_{\rm
  c}(\R^N\setminus\{0\},\C)$. 
Moreover
$\big(\min_{k\in{\mathbb{Z}}}|k-\Phi_{\mathbf{A}}|\big)^2=\mu_1(\A,0)$
is the best
constant in \eqref{eq:hardyN2}.

From \eqref{eq:12} it follows that
\[
\text{if $N=2$ and
\eqref{transversality} and \eqref{eq:circuit} hold, then that
$\mathcal{H}_{t}={\mathcal{H}}_{t}^\A$},
\] 
being the norms $\|\cdot\|_{{\mathcal{H}_{t}}}$, $\|\cdot\|_{{\mathcal{H}_t^\A}}$ equivalent.
On the other hand, if $N=2$ and $\Phi_\A\in\Z$
(i.e. $\mu_1(\A,0)=0$), then $\A$ is gauge equivalent to $0$ and 
$\mathcal{H}_{t}\subset{\mathcal{H}}_{t}^\A$,
$\mathcal{H}_{t}\neq{\mathcal{H}}_{t}^\A$; this case is actually not
very interesting since it can be reduced to the magnetic free problem by
a gauge transformation.

The following corollary provides a positivity condition for the
quadratic form associated to the electromagnetic potential under
condition  \eqref{eq:hardycondition}.
\begin{Corollary}\label{c:pos_def}
Let  $N\geq 2$, $a\in L^{\infty}\big({\mathbb S}^{N-1},\R\big)$ and let
  ${\mathbf{A}}\in C^1({\mathbb{S}} ^{N-1},{\mathbb{R}}^N)$ satisfy
   \eqref{transversality} and  \eqref{eq:hardycondition}.
Then, for every $t>0$,
\begin{align*}
  &\inf_{u\in {\mathcal
      H}_t\setminus\{0\}}\frac{\int_{\R^N}\big(|\nabla_\A
    u(x)|^2-\frac{a(x/|x|)}{|x|^2}\,|u(x)|^2\big)
    \,G(x,t)\,dx+\frac{N-2}{4t}\int_{\R^N}|u(x)|^2G(x,t)\,dx}
  {\int_{\R^N}|\nabla u(x)|^2
    \,G(x,t)\,dx+\int_{\R^N}\frac{|u(x)|^2}{|x|^2}
    \,G(x,t)\,dx+\frac{N-2}{4t}\int_{\R^N}|u(x)|^2G(x,t)\,dx}\\[5pt]
  &\quad=\inf_{v\in {\mathcal
      H}\setminus\{0\}}\frac{\int_{\R^N}\big(|\nabla_\A
    v(x)|^2-\frac{a(x/|x|)}{|x|^2}\,|v(x)|^2\big)
    \,G(x,1)\,dx+\frac{N-2}{4}\int_{\R^N}|v(x)|^2G(x,1)\,dx}
  {\int_{\R^N}|\nabla v(x)|^2
    \,G(x,1)\,dx+\int_{\R^N}\frac{|v(x)|^2}{|x|^2}
    \,G(x,1)\,dx+\frac{N-2}{4}\int_{\R^N}|v(x)|^2G(x,1)\,dx}>0.
\end{align*}
\end{Corollary}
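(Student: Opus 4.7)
The plan is to argue in two steps. The first step reduces to $t=1$ by the scaling $v(x):=u(\sqrt t\,x)$. Since $\A$ is $0$-homogeneous one has $(\nabla_\A u)(\sqrt t\,x)=t^{-1/2}\nabla_\A v(x)$ pointwise, and $G(\sqrt t\,x,t)=t^{-N/2}G(x,1)$; performing the substitution $y=\sqrt t\,x$ in each of the six integrals composing the two Rayleigh quotients, one checks that the numerator and the denominator at time $t$ both scale by the same factor $t^{-1}$, so the ratios coincide. The same computation gives $\|u\|_{\mathcal H_t}=\|v\|_{\mathcal H}$ on $C^\infty_{\rm c}(\R^N\setminus\{0\},\C)$, so $u\mapsto v$ extends to an isometric isomorphism $\mathcal H_t\to\mathcal H$ and the two infima are equal.

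The second step is to establish strict positivity of the infimum at $t=1$. Write $Q_+(v)$ for the numerator and $N(v)$ for the denominator at $t=1$, and set $\mu:=\mu_1(\A,a)+(N-2)^2/4$, which is strictly positive thanks to \eqref{eq:hardycondition}. Lemma \ref{Hardy_aniso} at $t=1$ yields
\[
Q_+(v)\ge\mu\int_{\R^N}\frac{|v(x)|^2}{|x|^2}\,G(x,1)\,dx.
\]
For $\lambda\in(0,1)$ to be fixed, split $Q_+(v)=(1-\lambda)Q_+(v)+\lambda Q_+(v)$ and control the $\lambda$ piece by the above bound. For the $(1-\lambda)$ piece apply the elementary pointwise inequality $|A+B|^2\ge(1-\e)|A|^2-(\e^{-1}-1)|B|^2$, valid for every $\e>0$, with $A=\nabla v$ and $B=i\,|x|^{-1}\A(x/|x|)\,v$, together with $|\A|\le\|\A\|_{L^\infty}$ and $|a|\le\|a\|_{L^\infty}$. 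Collecting terms produces
\[
Q_+(v)\ge(1-\lambda)(1-\e)\!\int_{\R^N}\!|\nabla v|^2 G\,dx+(1-\lambda)\tfrac{N-2}{4}\!\int_{\R^N}\!|v|^2 G\,dx+C_{\lambda,\e}\!\int_{\R^N}\!\tfrac{|v|^2}{|x|^2}G\,dx,
\]
where $G=G(\cdot,1)$ and $C_{\lambda,\e}:=\lambda\mu-(1-\lambda)\bigl[(\e^{-1}-1)\|\A\|_{L^\infty}^2+\|a\|_{L^\infty}\bigr]$. Fixing $\e=1/2$ and then choosing $\lambda$ close enough to $1$ that $C_{\lambda,1/2}\ge\mu/2$ makes all three coefficients non-negative, with the middle one vanishing precisely when $N=2$ (in which case the matching term is absent from $N(v)$ as well). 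Taking the minimum of the strictly positive coefficients yields $Q_+(v)\ge c\,N(v)$ for some $c=c(\mu,\|\A\|_{L^\infty},\|a\|_{L^\infty})>0$, and the corollary follows.

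The main obstacle is converting control of the magnetic gradient $\int|\nabla_\A v|^2 G\,dx$ appearing in $Q_+(v)$ into control of the unmagnetized gradient $\int|\nabla v|^2 G\,dx$ demanded by $N(v)$. The cross term in the expansion of $|\nabla_\A v|^2$ is sign-indefinite, so isolating $\int|\nabla v|^2 G\,dx$ inevitably costs a multiple of the singular integral $\int|v|^2/|x|^2\,G\,dx$; it is precisely the Hardy-type surplus $\mu>0$ delivered by Lemma \ref{Hardy_aniso} under the subcriticality assumption \eqref{eq:hardycondition} that absorbs this loss and closes the interpolation.
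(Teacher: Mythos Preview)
Your proof is correct. The scaling step is identical to the paper's, but your positivity argument at $t=1$ takes a genuinely different route.

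The paper argues by contradiction: assuming the infimum is zero, it picks $v_\e$ with ratio below $\e$, rewrites the resulting inequality so that the electric potential becomes $a/(1-(1+\|\A\|_{L^\infty})\e)$, applies Lemma~\ref{Hardy_aniso} with this perturbed potential, and then invokes continuity of $a\mapsto\mu_1(\A,a)$ in the $L^\infty$-norm to let $\e\to0$ and contradict \eqref{eq:hardycondition}. Your argument is instead direct and constructive: you split $Q_+=(1-\lambda)Q_++\lambda Q_+$, apply the Hardy inequality on the $\lambda$ piece, and use the elementary bound $|A+B|^2\ge(1-\e)|A|^2-(\e^{-1}-1)|B|^2$ on the $(1-\lambda)$ piece to peel off $\int|\nabla v|^2G$ at the cost of a controlled multiple of the singular term. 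This avoids the continuity-of-$\mu_1$ step entirely and yields an explicit lower bound for the infimum in terms of $\mu_1(\A,a)+(N-2)^2/4$, $\|\A\|_{L^\infty}$ and $\|a\|_{L^\infty}$, whereas the paper's contradiction argument only establishes strict positivity. Both approaches ultimately rest on Lemma~\ref{Hardy_aniso} to absorb the sign-indefinite magnetic cross term; yours makes this trade-off quantitative.
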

\begin{pf}
  The change of  variables $u(x)=v(x/\sqrt t)$ immediately gives the
  equality of the two infimum levels.
To prove
  that they are strictly positive, we argue by contradiction. Let us 
  assume that for every $\e>0$ there exists $v_\e\in {\mathcal
    H}\setminus\{0\}$ such that
\begin{multline*}
\int_{\R^N}\bigg(|\nabla_\A
v_\e(x)|^2-\frac{a(x/|x|)}{|x|^2}\,|v_\e(x)|^2\bigg)
\,G(x,1)\,dx+\frac{N-2}{4}\int_{\R^N}|v_\e(x)|^2G(x,1)\,dx\\
<\e \bigg(
\int_{\R^N}|\nabla v_\e(x)|^2
    \,G(x,1)\,dx+\int_{\R^N}\frac{|v_\e(x)|^2}{|x|^2}
    \,G(x,1)\,dx+\frac{N-2}{4}\int_{\R^N}|v_\e(x)|^2G(x,1)\,dx\bigg).
\end{multline*}
Hence 
\begin{multline*}
\int_{\R^N}\bigg(|\nabla_\A
v_\e(x)|^2-\frac{a(x/|x|)}{|x|^2}\,|v_\e(x)|^2\bigg)
\,G(x,1)\,dx+\frac{N-2}{4}\int_{\R^N}|v_\e(x)|^2G(x,1)\,dx\\
<\e 
\int_{\R^N}\left(|\nabla_\A v_\e(x)|^2+(1+\|\A\|_{L^\infty({\mathbb S}^{N-1})})\frac{|v_\e(x)|^2}{|x|^2}
+\frac{N-2}{4}|v_\e(x)|^2\right)G(x,1)\,dx\\
\leq \e (1+\|\A\|_{L^\infty({\mathbb S}^{N-1})})
\int_{\R^N}\left(|\nabla_\A v_\e(x)|^2+\frac{|v_\e(x)|^2}{|x|^2}
+\frac{N-2}{4}|v_\e(x)|^2\right)G(x,1)\,dx.
\end{multline*}
From the above inequality and Lemma \ref{Hardy_aniso} it follows that
\begin{align*}
  \bigg(\mu_1\bigg(\A,&\frac{a}{1-(1+\|\A\|_{L^\infty({\mathbb S}^{N-1})})\e}\bigg)+\frac{(N-2)^2}4\bigg)
  \int_{\R^N}\dfrac{|v_\e(x)|^2}{|x|^2}\,G(x,1)\,dx\\
&  \leq \int_{\R^N}\bigg(|\nabla_\A
  v_\e(x)|^2-\frac{a(x/|x|)}{1-(1+\|\A\|_{L^\infty({\mathbb S}^{N-1})})\e}\,\frac{|v_\e(x)|^2}{|x|^2}\bigg)
  \,G(x,1)\,dx\\
&\qquad\qquad+\frac{N-2}{4}\int_{\R^N}|v_\e(x) |^2 G(x,1)\,dx\\
&<\frac{(1+\|\A\|_{L^\infty({\mathbb
      S}^{N-1})})\e}{1-(1+\|\A\|_{L^\infty({\mathbb S}^{N-1})})\e}
\int_{\R^N}\frac{|v_\e(x)|^2}{|x|^2}\,G(x,1)\,dx,
\end{align*}
hence
$$
\mu_1\bigg(\A,\frac{a}{1-(1+\|\A\|_{L^\infty({\mathbb
      S}^{N-1})})\e}\bigg)+\frac{(N-2)^2}4
<\frac{(1+\|\A\|_{L^\infty({\mathbb
      S}^{N-1})})\e}{1-(1+\|\A\|_{L^\infty({\mathbb S}^{N-1})})\e}.
$$
From \eqref{firsteig} it follows easily that, for  fixed
${\mathbf{A}}\in C^1({\mathbb{S}} ^{N-1},{\mathbb{R}}^N)$, the map
$a\mapsto \mu_1(\A,a)$ 
is continuous with respect to the
$L^{\infty}\big({\mathbb S}^{N-1}\big)$-norm. Therefore, letting
$\e\to0$ in the
above inequality, we obtain $\mu_1(\A,a)+\frac{(N-2)^2}4\leq0$, thus
contradicting assumption \eqref{eq:hardycondition}.
\end{pf}

The negligibility assumption 
\eqref{eq:h} allows treating $h$ as a lower order potential,
recovering for small time the positivity of the quadratic form
associated to \eqref{probtilde}.
\begin{Corollary}\label{c:pos_per}
Let  $N\geq 2$, $a\in L^{\infty}\big({\mathbb S}^{N-1},\R\big)$ and
let 
  ${\mathbf{A}}\in C^1({\mathbb{S}} ^{N-1},{\mathbb{R}}^N)$ satisfy
   \eqref{transversality} and  \eqref{eq:hardycondition}.
Let $T>0$, $I=(-T,0)$ and $h\in L^\infty_{{\rm loc}}((\R^N\setminus
  \{0\})\times (-T,0))$ satisfy (\ref{eq:h}) in $I$. Then there exist $C_1,C_2>0$ and
  $\overline{T}>0$ such that for every $t\in(0,\overline{T})$,
$s\in(-T,0)$, and $u\in
  {\mathcal H}_t$ 
\begin{align*}
\int_{\R^N}\bigg(|\nabla_\A
u(x)|^2&-\frac{a(x/|x|)}{|x|^2}\,|u(x)|^2-|h(x,s)||u(x) |^2\bigg)
\,G(x,t)\,dx\\
&\geq C_1\int_{\R^N}\frac{|u(x) |^2}{|x|^2}\,G(x,t)\,dx
-\frac{C_2}t\int_{\R^N}|u(x) |^2 G(x,t)\,dx\\
\int_{\R^N}\bigg(|\nabla_\A
u(x)|^2&-\frac{a(x/|x|)}{|x|^2}\,|u(x) |^2-|h(x,s)||u(x) |^2\bigg)
\,G(x,t)\,dx
+\frac{N-2}{4t}\int_{\R^N}|u(x) |^2 G(x,t)\,dx\\
&\geq C_1\bigg(\int_{\R^N}|\nabla
u(x)|^2\,G(x,t)\,dx+\int_{\R^N}\frac{|u(x)|^2}{|x|^2}
\,G(x,t)\,dx+\frac1t\int_{\R^N}|u(x) |^2 G(x,t)\,dx\bigg).
\end{align*}
\end{Corollary}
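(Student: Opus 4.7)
The plan is to exploit assumption \eqref{eq:h} to decompose $|h|$ into a singular-but-arbitrarily-small part and a bounded remainder. For any $R>0$, splitting the estimate on $\{|x|\le R\}$ and $\{|x|>R\}$ yields the pointwise bound
\begin{equation*}
|h(x,s)|\le C_h+C_h|x|^{-2+\e}\le C_hR^{\e}|x|^{-2}+C_h(1+R^{-2+\e}),
\end{equation*}
so that after integration against $|u|^2G$,
\begin{equation*}
\int_{\R^N}|h(x,s)|\,|u|^2\,G\,dx\le C_hR^{\e}\int_{\R^N}\frac{|u|^2}{|x|^2}\,G\,dx+C_h(1+R^{-2+\e})\int_{\R^N}|u|^2\,G\,dx.
\end{equation*}

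For the first inequality I invoke Lemma \ref{Hardy_aniso} in rearranged form,
\begin{equation*}
\int_{\R^N}|\nabla_\A u|^2G-\int_{\R^N}\tfrac{a(x/|x|)}{|x|^2}|u|^2G\ge \delta\int_{\R^N}\tfrac{|u|^2}{|x|^2}G-\tfrac{N-2}{4t}\int_{\R^N}|u|^2G,
\end{equation*}
with $\delta:=\mu_1(\A,a)+\tfrac{(N-2)^2}{4}>0$ by \eqref{eq:hardycondition}. Choosing $R$ such that $C_hR^{\e}\le\delta/2$ preserves a positive multiple $\delta/2$ of $\int|u|^2/|x|^2\,G$, while for $t\in(0,\overline T)$ the residual constant $C_h(1+R^{-2+\e})$ is dominated by $\frac{C_h(1+R^{-2+\e})\overline T}{t}$. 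This gives the first inequality with $C_1=\delta/2$ and $C_2=\tfrac{N-2}{4}+C_h(1+R^{-2+\e})\overline T$.

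The second inequality proceeds from Corollary \ref{c:pos_def}, which furnishes $\delta'>0$ with
\begin{equation*}
\int|\nabla_\A u|^2G-\int\tfrac{a(x/|x|)}{|x|^2}|u|^2G+\tfrac{N-2}{4t}\int|u|^2G\ge \delta'\left(\int|\nabla u|^2G+\int\tfrac{|u|^2}{|x|^2}G+\tfrac{N-2}{4t}\int|u|^2G\right).
\end{equation*}
Subtracting the bound on $\int|h||u|^2G$ and choosing $R$ with $C_hR^{\e}\le\delta'/2$ retains positive multiples of $\int|\nabla u|^2G$ and $\int|u|^2/|x|^2\,G$, while the coefficient of $\int|u|^2G$ becomes $\delta'\tfrac{N-2}{4t}-C_h(1+R^{-2+\e})$. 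For $N\ge3$ this is at least $\tfrac{\delta'(N-2)}{8t}$ provided $\overline T$ is chosen small, which yields the inequality in dimension $N\ge 3$.

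The main obstacle is the case $N=2$, in which $\tfrac{N-2}{4t}=0$ and the left-hand side offers no $\tfrac1t\int|u|^2G$ reservoir to cover the corresponding term on the right. I plan to remedy this with a weighted Poincar\'e-type inequality: for every $v\in\mathcal H$,
\begin{equation*}
\int_{\R^N}|v|^2G(y,1)\,dy\le C\left(\int_{\R^N}|\nabla v|^2G(y,1)\,dy+\int_{\R^N}\tfrac{|v|^2}{|y|^2}G(y,1)\,dy\right),
\end{equation*}
obtained by combining the classical Gaussian Poincar\'e inequality $\int|v-\bar v|^2G(\cdot,1)\le C\int|\nabla v|^2G(\cdot,1)$ with the Cauchy--Schwarz estimate $|\bar v|^2\bigl(\int G(\cdot,1)\bigr)^2\le \int\tfrac{|v|^2}{|y|^2}G(\cdot,1)\cdot\int|y|^2G(\cdot,1)$, the latter integral being finite for $N=2$ as well. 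Rescaling $x=\sqrt t\,y$ then produces $\tfrac1t\int|u|^2G(\cdot,t)\le C\bigl(\int|\nabla u|^2G+\int|u|^2/|x|^2\,G\bigr)$, which simultaneously allows the residual $C_h(1+R^{-2+\e})\int|u|^2G$ to be absorbed after further shrinking $\overline T$, and allows the desired $\tfrac1t\int|u|^2G$ to be recovered on the right-hand side.
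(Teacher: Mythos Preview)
Your argument is correct and follows essentially the same strategy as the paper: estimate $\int|h|\,|u|^2G$ by a Hardy part plus an $L^2$ part, then invoke Lemma~\ref{Hardy_aniso} and Corollary~\ref{c:pos_def}. The one difference in the decomposition is that the paper splits $|x|^{-2+\e}$ at the parabolic scale $|x|=\sqrt t$ rather than at a fixed radius $R$, obtaining
\[
\int_{\R^N}|h(x,s)|\,|u|^2G\,dx\le C_h\, t^{\e/2}\int_{\R^N}\frac{|u|^2}{|x|^2}\,G\,dx+\frac{C_h}{t}\big(t+t^{\e/2}\big)\int_{\R^N}|u|^2G\,dx,
\]
so the Hardy coefficient becomes small automatically as $t\to0$ and no separate choice of $R$ is required; otherwise the two routes are equivalent. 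Your explicit handling of the case $N=2$ via the Gaussian Poincar\'e inequality is in fact more detailed than the paper, whose proof only cites \eqref{eq:11}, Lemma~\ref{Hardy_aniso} and Corollary~\ref{c:pos_def}. Since for $N=2$ the left-hand side contributes no $\tfrac{1}{t}\int|u|^2G$ term while the right-hand side demands one, an inequality of the form $\int|v|^2G(\cdot,1)\le C\big(\int|\nabla v|^2G(\cdot,1)+\int|v|^2|y|^{-2}G(\cdot,1)\big)$ on $\mathcal H$ is genuinely needed, and your Poincar\'e argument (or, alternatively, the compactness of $\mathcal H\hookrightarrow\mathcal L$ from Lemma~\ref{l:compact}) supplies exactly that.
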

\begin{pf}
From (\ref{eq:h}) it follows that, for every $u\in
  {\mathcal H}_t$,
\begin{align}\label{eq:11}
&  \left| \int_{\R^N}|h(x,s)||u(x)|^2G(x,t)\,dx\right| \\
&\notag\leq C_h\bigg(
  \int_{\R^N}|u(x)|^2G(x,t)\,dx+\int_{\R^N}|x|^{-2+\e}|u(x)|^2G(x,t)\,dx\bigg)\\
\notag  &\leq C_h\bigg(
  \int_{\R^N}|u(x)|^2G(x,t)\,dx+t^{\e/2}\int\limits_{|x|\leq\sqrt t
  }\frac{|u(x)|^2}{|x|^2}G(x,t)\,dx
\\
&\notag\qquad\qquad+t^{-1+\e/2}\int\limits_{|x|\geq\sqrt t
  }|u(x)|^2G(x,t)\,dx
\bigg)\\
\notag&=\frac{C_h}{t}(t+t^{\e/2})\int_{\R^N}|u(x)|^2G(x,t)\,dx+
C_ht^{\e/2}\int_{\R^N}\frac{|u(x)|^2}{|x|^2}G(x,t)\,dx.
\end{align}
The stated inequalities follow from (\ref{eq:11}),
Lemma \ref{Hardy_aniso}, Corollary \ref{c:pos_def},
and assumption  (\ref{eq:hardycondition}).
\end{pf}

The proof of the following
inequality  follows the spirit of \cite[Lemma 3]{EFV}.

\begin{Lemma}\label{l:ineqx2}
For every $u\in {\mathcal H}$, $|x|u\in{\mathcal L}$ and
$$
\frac{1}{16}
\int_{\R^N}|x|^2|u(x)|^2G(x,1)\,dx\leq
\int_{\R^{N}}|\nabla_\A u(x)|^2G(x,1)\,dx+\frac{N}{4}\int_{\R^N}|u(x)|^2G(x,1)\,dx.
$$
\end{Lemma}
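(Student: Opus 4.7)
The plan is to establish the identity by integration by parts against the weight $G(x,1)$ and then apply a weighted Cauchy--Schwarz inequality, exploiting the transversality condition \eqref{transversality}.

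First I would prove the inequality for $u\in C^\infty_{\mathrm c}(\R^N\setminus\{0\},\C)$ and then extend by density to $\mathcal{H}$; the assertion $|x|u\in\mathcal{L}$ will follow a posteriori once the inequality is established. The starting point is the identity
\[
\dive\bigl(x\,G(x,1)\bigr)=N\,G(x,1)+x\cdot\nabla G(x,1)=\Bigl(N-\tfrac{|x|^2}{2}\Bigr)G(x,1),
\]
which comes directly from \eqref{eq:heatker}. Integrating by parts (for $u$ compactly supported away from $0$ the boundary terms vanish and the Gaussian weight controls decay at infinity), I would obtain
\[
\int_{\R^N}\Bigl(N-\tfrac{|x|^2}{2}\Bigr)|u(x)|^2G(x,1)\,dx=-\int_{\R^N}G(x,1)\,x\cdot\nabla|u(x)|^2\,dx.
\]

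The key algebraic step is the use of the transversality condition: since $\A(x/|x|)\cdot x=|x|\,\A(\theta)\cdot\theta=0$, one has
\[
x\cdot\nabla|u|^2=2\,\Re\bigl(\bar u\,x\cdot\nabla u\bigr)=2\,\Re\bigl(\bar u\,x\cdot\nabla_\A u\bigr),
\]
so the previous identity becomes
\[
\int_{\R^N}\tfrac{|x|^2}{2}|u|^2G(x,1)\,dx=N\int_{\R^N}|u|^2G(x,1)\,dx+2\,\Re\!\int_{\R^N}\bar u\,x\cdot\nabla_\A u\,G(x,1)\,dx.
\]

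For the last integral I would apply Young's inequality with parameter $\varepsilon=1/4$:
\[
\Bigl|2\,\Re\!\int_{\R^N}\bar u\,x\cdot\nabla_\A u\,G(x,1)\,dx\Bigr|\leq\tfrac{1}{4}\int_{\R^N}|x|^2|u|^2G(x,1)\,dx+4\int_{\R^N}|\nabla_\A u|^2G(x,1)\,dx.
\]
Substituting and absorbing the term $\tfrac14\int|x|^2|u|^2G\,dx$ into the left-hand side gives $\tfrac14\int|x|^2|u|^2G\,dx\leq N\int|u|^2G\,dx+4\int|\nabla_\A u|^2G\,dx$, which upon division by $4$ yields the stated inequality. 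Finally, for a general $u\in\mathcal H$, I would approximate by a sequence $u_n\in C^\infty_{\mathrm c}(\R^N\setminus\{0\},\C)$ converging to $u$ in $\mathcal H$; the right-hand side is continuous in this topology, so by Fatou's lemma applied to the left-hand side the inequality passes to the limit, which simultaneously gives $|x|u\in\mathcal L$.

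The main obstacle is really only bookkeeping: ensuring that the integration by parts is legitimate and that the density argument goes through cleanly. The crucial analytic input is that transversality \eqref{transversality} makes $\A$ invisible in the radial derivative, so the identity reduces to a computation that would hold in the scalar (magnetic-free) setting.
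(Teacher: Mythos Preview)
Your proof is correct and is essentially the same computation as the paper's: both rely on the transversality condition to write $x\cdot\nabla|u|^2=2\,\Re(\bar u\,x\cdot\nabla_\A u)$, integrate by parts against the Gaussian weight, and complete the square. The only cosmetic difference is that the paper packages the completion-of-the-square step by setting $w=e^{-|x|^2/8}u$ and observing $\int_{\R^N}|\nabla_\A w|^2\,dx\geq 0$, which amounts exactly to your Young inequality with the optimal parameter.
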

\begin{pf}
  Let $u \in C^\infty_{\rm c}(\R^N\setminus\{0\},\C)$ and
  $w=e^{-\frac{|x|^2}{8}} u$. It follows that
$$
\nabla_{\A} w= e^{-\frac{|x|^2}{8}} \Bigg(\bigg(\nabla +i\frac{\A}{|x|}\bigg) u - \frac{ux}{4}\Bigg).
$$
Hence, by the transversality condition \eqref{transversality}, an
integration by parts yields that
\begin{align*}
\int_{\mathbb{R}^N} |\nabla_\A w|^2\, dx&=
 \int_{\mathbb{R}^N} e^{-\frac{|x|^2}{4}}  \left( \left|\big(\nabla
+i\tfrac{\A}{|x|}\big) u\right|^2 +\dfrac{|x|^2
|u|^2}{16} - \dfrac{1}{2}\Re \left(\left(\nabla +i\tfrac{\A}{|x|}\right)
                                          u \cdot x\,\bar{u}
                                          \right)
\right)\,dx\\
&=  \dint_{\mathbb{R}^N} e^{-\frac{|x|^2}{4}}  \left(  \left|\big(\nabla
+i\tfrac{\A}{|x|}\big) u\right|^2+\dfrac{|x|^2 |u|^2}{16} - \dfrac{1}{4} \nabla |u|^2\cdot x\right)\,dx\\
&= \dint_{\mathbb{R}^N} e^{-\frac{|x|^2}{4}}  |\nabla_\A u|^2\,dx -\dfrac{1}{16}\dint_{\mathbb{R}^N} e^{-\frac{|x|^2}{4}} {|x|^2 |u|^2}\,dx + \dfrac{N}{4} \dint_{\mathbb{R}^N}  e^{-\frac{|x|^2}{4}} |u|^2\, dx\geq 0.
\end{align*}
The conclusion follows by  density of $C^\infty_{\rm c}(\R^N\setminus\{0\},\C)$
in ${\mathcal H}$.
\end{pf}

Using the change of variables $u(x)=v(x/\sqrt t)$, it is easy to
verify that Lemma \ref{l:ineqx2}
implies the following inequality in ${\mathcal H}_t$.
\begin{Corollary}\label{cor:ineq}
For every $t>0$ and $u\in {\mathcal H}_t$, there holds
$$
\frac{1}{16t^2}
\int_{\R^N}|x|^2|u(x)|^2G(x,t)\,dx\leq
\int_{\R^{N}}|\nabla_\A u(x)|^2G(x,t)\,dx+\frac{N}{4t}\int_{\R^N}|u(x)|^2G(x,t)\,dx.
$$
\end{Corollary}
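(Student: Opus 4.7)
The plan is to deduce Corollary \ref{cor:ineq} directly from Lemma \ref{l:ineqx2} via the parabolic rescaling suggested in the remark that precedes the statement. Fix $t>0$ and, given $u\in\mathcal H_t$, set $v(y):=u(\sqrt t\,y)$, so that $u(x)=v(x/\sqrt t)$. A first routine check shows that $u\mapsto v$ realizes an isomorphism of $\mathcal H_t$ onto $\mathcal H=\mathcal H_1$, so Lemma \ref{l:ineqx2} will be applicable to $v$; moreover, by density of $C^\infty_{\rm c}(\R^N\setminus\{0\},\C)$ in $\mathcal H_t$, I may assume $u$ smooth and compactly supported away from the origin while carrying out the scaling computations and extend by continuity at the end.

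Next I would record the elementary scaling identities. Writing $y=x/\sqrt t$, one has $dx=t^{N/2}\,dy$ and $G(x,t)\,dx=G(y,1)\,dy$, since $G(x,t)=t^{-N/2}\exp(-|y|^2/4)$. Moreover $|x|^2=t|y|^2$; and, since $x/|x|=y/|y|$ and $|x|=\sqrt t\,|y|$, the homogeneity of the singular magnetic potential gives $\A(x/|x|)/|x|=t^{-1/2}\,\A(y/|y|)/|y|$, whence $(\nabla_\A u)(x)=t^{-1/2}(\nabla_\A v)(y)$. Combining these identities yields
\begin{align*}
\int_{\R^N}|x|^2|u(x)|^2G(x,t)\,dx&=t\int_{\R^N}|y|^2|v(y)|^2G(y,1)\,dy,\\
\int_{\R^N}|\nabla_\A u(x)|^2G(x,t)\,dx&=\frac1t\int_{\R^N}|\nabla_\A v(y)|^2G(y,1)\,dy,\\
\int_{\R^N}|u(x)|^2G(x,t)\,dx&=\int_{\R^N}|v(y)|^2G(y,1)\,dy.
\end{align*}

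Applying Lemma \ref{l:ineqx2} to $v\in\mathcal H$ and then dividing the resulting inequality through by $t$ produces precisely the stated bound for $u$. There is essentially no obstacle at this stage: the transversality condition \eqref{transversality} has already been exploited inside the proof of Lemma \ref{l:ineqx2} to convert the cross term $\Re\bigl((\nabla_\A v)\cdot y\,\bar v\bigr)$ into $\tfrac14\nabla|v|^2\cdot y$, so only the dilation bookkeeping remains. The one mildly delicate point, should one wish to be pedantic, is verifying that the rescaling indeed intertwines the norms of $\mathcal H_t$ and $\mathcal H$, but this is immediate from the explicit formulas above together with the definition of $\mathcal H_t$ as a completion.
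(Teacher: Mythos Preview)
Your proof is correct and follows exactly the approach indicated in the paper, namely applying the change of variables $u(x)=v(x/\sqrt t)$ to reduce to Lemma \ref{l:ineqx2}. The scaling identities you record are accurate, and dividing by $t$ yields the stated inequality.
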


The following weighted Sobolev type inequalities hold. 

\begin{Lemma}\label{l:sob}
For all $v\in {\mathcal H}$  there holds
 $v\sqrt{G(\cdot,1)}\in L^{s}(\R^N)$ for all $s\in[2,\frac{2N}{N-2}]$
 if $N\geq3$ and for all $s\geq2$ if $N=2$; furthermore 
$$
\bigg(
\int_{\R^N}|v(x)|^sG^{\frac{s}{2}}(x,1)\,dx\bigg)^{\!\!\frac{2}{s}}\leq
C_s\|v\|^2_{{\mathcal H}}
$$
for some constant $C_s>0$ independent of $v\in {\mathcal H}$.
Moreover, for every $t>0$ and $u\in {\mathcal H}_t$,
$$
\Big(
\int_{\R^N}|u(x)|^{s}G^{\frac{s}{2}}(x,t)\,dx\Big)^{\!\!\frac{2}{s}}\leq C_s
t^{-\frac{N}{s}\left(\frac{s-2}{2}\right)}\|u\|^{2}_{{\mathcal H}_t},
$$
for all $s\in[2,\frac{2N}{N-2}]$
 if $N\geq3$ and for all $s\geq2$ if $N=2$, with $C_s>0$ as above.
\end{Lemma}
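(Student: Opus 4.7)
The plan is to reduce the two weighted Sobolev inequalities to the classical unweighted Sobolev embedding $H^1(\R^N)\hookrightarrow L^s(\R^N)$ (valid exactly in the asserted range of $s$) by absorbing the Gaussian weight into the function. More precisely, I would introduce $w(x):=v(x)e^{-|x|^2/8}$, chosen so that $|w|^s=|v|^s G^{s/2}(x,1)$. It then suffices to prove that the linear map $v\mapsto w$ sends $\mathcal{H}$ continuously into $H^1(\R^N)$, after which the classical Sobolev embedding immediately yields the desired $L^s$-estimate for $w=v\sqrt{G(\cdot,1)}$.

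To obtain this $H^1$-boundedness, I would first work with $v\in C^{\infty}_{\rm c}(\R^N\setminus\{0\},\C)$ and extend by density in $\mathcal{H}$ at the end. A direct computation gives $\nabla w=e^{-|x|^2/8}\bigl(\nabla v-\tfrac{x}{4}v\bigr)$, whence
\[
\|w\|_{H^1(\R^N)}^2\leq 2\int_{\R^N}|\nabla v|^2 G(x,1)\,dx+\tfrac{1}{8}\int_{\R^N}|x|^2|v(x)|^2 G(x,1)\,dx+\int_{\R^N}|v|^2 G(x,1)\,dx.
\]
The first and third integrals are immediately controlled by $\|v\|_{\mathcal{H}}^2$; the only genuine obstacle is the middle $|x|^2|v|^2$-integral. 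This is precisely the term that Lemma~\ref{l:ineqx2} has been proved for: it bounds it by $\int|\nabla_{\A}v|^2 G(\cdot,1)+\tfrac{N}{4}\int|v|^2 G(\cdot,1)$. Combined with the pointwise inequality $|\nabla_{\A}v|^2\leq 2|\nabla v|^2+2\|\A\|^2_{L^\infty(\SN)}\,|v|^2/|x|^2$ (which uses $\A\in L^\infty(\SN)$), the right-hand side is in turn absorbed into $\|v\|_{\mathcal{H}}^2$, yielding $\|w\|_{H^1(\R^N)}^2\leq C\|v\|_{\mathcal{H}}^2$ and hence the first displayed inequality.

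The $\mathcal{H}_t$ version then follows by a parabolic rescaling. For $u\in\mathcal{H}_t$ I would set $v(y):=u(\sqrt{t}\,y)$; exploiting $G(x,t)=t^{-N/2}G(y,1)$, the substitution $x=\sqrt{t}\,y$ shows at once that $\|u\|_{\mathcal{H}_t}=\|v\|_{\mathcal{H}}$ and that
\[
\int_{\R^N}|u(x)|^s G^{s/2}(x,t)\,dx=t^{N/2-Ns/4}\int_{\R^N}|v(y)|^s G^{s/2}(y,1)\,dy.
\]
Taking $2/s$-powers converts the prefactor into $t^{N/s-N/2}=t^{-(N/s)(s-2)/2}$, exactly the scaling factor stated in the Lemma; applying the $\mathcal{H}$-estimate to $v$ concludes the argument. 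No further difficulty is expected, the key conceptual point being the identification of $|x|^2|v|^2 G(\cdot,1)$ as a lower-order term via Lemma~\ref{l:ineqx2}, which is why that lemma was established just beforehand.
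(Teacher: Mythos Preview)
Your proposal is correct and follows essentially the same route as the paper: show that $v\mapsto v\sqrt{G(\cdot,1)}$ maps $\mathcal H$ continuously into $H^1(\R^N)$ via Lemma~\ref{l:ineqx2}, invoke the classical Sobolev embedding, and then rescale by $u(x)=v(x/\sqrt t)$ to obtain the $\mathcal H_t$ version. The paper's proof is stated more tersely, but the content and the key use of Lemma~\ref{l:ineqx2} to control the $|x|^2|v|^2$ term are identical.
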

\begin{pf}
  Lemma \ref{l:ineqx2} implies that, if $v\in{\mathcal H}$,
  then $v\sqrt{G(\cdot,1)}\in H^1(\R^N)$ and the first embedding
  follows from  classical Sobolev
  inequalities and Lemma \ref{l:ineqx2}. The second inequality follows
  directly 
  from the first one and the change of variables $u(x)=v(x/\sqrt t)$. 
\end{pf}

\section{Spectrum of Ornstein-Uhlenbeck type operators with
  critical electromagnetic   potentials}\label{sec:spectr-analys-self}

In this section we describe the spectral properties of the operator
$L_{\A,a}= {\mathcal{L}}_{{\mathbf{A}},a}+\frac{x}2\cdot\nabla$ defined in (\ref{eq:13}). In particular we  extend to the
general critical electromagnetic case 
previous analogous results obtained in \cite{vazquez_zuazua} for
$\A\equiv 0$ and $a\equiv\lambda$ constant and in \cite{FP} for
$\A\equiv 0$.

In order to apply the Spectral Theorem to the operator $L_{\A,a}$,
some compactness is first needed. With this aim, following \cite{ES}, we prove the following compact embedding.

\goodbreak
\begin{Lemma}\label{l:compact}
The space ${\mathcal H}$ is compactly embedded in
${\mathcal L}$.
\end{Lemma}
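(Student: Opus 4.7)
The plan is to use a standard three-region splitting argument, dividing $\R^N$ into a small ball around the origin, a large annulus, and the complement of a large ball, and to control each region separately using the structural features of the norm on $\mathcal{H}$.

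First I would take a bounded sequence $(v_n) \subset \mathcal{H}$. Since $\mathcal{H}$ is a Hilbert space, after passing to a subsequence we have $v_n \rightharpoonup v$ weakly in $\mathcal{H}$ for some $v \in \mathcal{H}$. The goal is to show $v_n \to v$ strongly in $\mathcal{L}$. By setting $w_n = v_n - v$, we can reduce to showing that a sequence $(w_n)$ bounded in $\mathcal{H}$ with $w_n \rightharpoonup 0$ satisfies $\|w_n\|_{\mathcal{L}} \to 0$. Fix $\delta > 0$ and split
\[
\int_{\R^N}|w_n(x)|^2 G(x,1)\,dx = \int_{|x|<\varepsilon} + \int_{\varepsilon \leq |x| \leq R} + \int_{|x|>R}.
\]

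For the inner region, since $1/|x|^2 \geq 1/\varepsilon^2$ on $\{|x|<\varepsilon\}$ is reversed (in fact $1 \leq \varepsilon^2/|x|^2$ there), I would estimate
\[
\int_{|x|<\varepsilon}|w_n|^2 G(x,1)\,dx \leq \varepsilon^2 \int_{\R^N}\frac{|w_n|^2}{|x|^2} G(x,1)\,dx \leq \varepsilon^2 \|w_n\|_{\mathcal{H}}^2 \leq C\varepsilon^2,
\]
using the fact that the Hardy weight $|v|^2/|x|^2$ appears directly in the definition of $\|\cdot\|_{\mathcal{H}}$. For the outer region, a similar trick but with the $|x|^2$-weighted estimate from Lemma \ref{l:ineqx2} (applied to the magnetic-free case $\A\equiv 0$, noting $|\nabla|w_n||\leq|\nabla w_n|$) gives
\[
\int_{|x|>R}|w_n|^2 G(x,1)\,dx \leq \frac{1}{R^2}\int_{\R^N}|x|^2|w_n|^2 G(x,1)\,dx \leq \frac{C}{R^2}\|w_n\|_{\mathcal{H}}^2 \leq \frac{C'}{R^2}.
\]
Choosing $\varepsilon$ small and $R$ large, the first and third terms can each be made smaller than $\delta/3$ uniformly in $n$.

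For the middle annulus $A_{\varepsilon,R} = \{\varepsilon \leq |x| \leq R\}$, the weight $G(x,1) = e^{-|x|^2/4}$ is bounded above and below by positive constants depending on $\varepsilon, R$. Hence the restriction of $(w_n)$ to $A_{\varepsilon,R}$ is bounded in $H^1(A_{\varepsilon,R})$ in the usual unweighted sense. By the classical Rellich–Kondrachov compact embedding, $w_n \to 0$ strongly in $L^2(A_{\varepsilon,R})$, so the middle term is smaller than $\delta/3$ for $n$ large. Combining the three estimates gives $\|w_n\|_{\mathcal{L}}^2 < \delta$ for $n$ large, completing the proof. The only subtle point is ensuring the $|x|^2$-weighted bound of Lemma \ref{l:ineqx2} is available at this stage for non-magnetic gradients, which follows from diamagnetic comparison since $\nabla|w_n|$ is well-defined a.e. and pointwise dominated by the full magnetic gradient; alternatively, for $\mathcal{H}$ defined without $\A$, the same proof as Lemma \ref{l:ineqx2} with $\A\equiv 0$ applies directly.
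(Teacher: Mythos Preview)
Your proof is correct and follows essentially the same strategy as the paper: weak compactness in $\mathcal{H}$, Rellich--Kondrachov for the bounded region, and the $|x|^2$-weighted estimate of Lemma~\ref{l:ineqx2} to control the tail at infinity. The only difference is that you use a three-region split, carving out a small ball $\{|x|<\varepsilon\}$ near the origin and controlling it with the Hardy term in the $\mathcal{H}$-norm, whereas the paper uses only two regions $\{|x|\leq R\}$ and $\{|x|>R\}$. Your inner-ball estimate is valid but unnecessary: since the weight $e^{-|x|^2/4}$ is bounded above and below on all of $\{|x|\leq R\}$ and the $\mathcal{H}$-norm already controls $\int|\nabla v|^2 e^{-|x|^2/4}\,dx$ and $\int|v|^2 e^{-|x|^2/4}\,dx$, boundedness in $\mathcal{H}$ gives boundedness in $H^1(\{|x|\leq R\})$ directly, so Rellich handles the full ball without excising the origin. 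Also, your remarks about the diamagnetic inequality are not needed here: the $\mathcal{H}$-norm is defined with the ordinary gradient $|\nabla v|$, so Lemma~\ref{l:ineqx2} with $\A\equiv 0$ applies immediately.
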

\begin{pf}
Let us  assume that $u_{k}\weakly
u$ weakly in ${\mathcal H}$. From Rellich's theorem
$u_{k}\rightarrow u$ in  $L^{2}_{\rm loc}(\R^N)$, i.e. $u_k\to u$
strongly in $L^2(\Omega)$ for all $\Omega\subset\subset\R^N$. For every $R>0$ and $k\in\N$,
we have
\begin{equation}\label{eq:6}
\dint_{\R^N}|u_{k}-u|^2G(x,1)\,dx=
A_k(R)+B_k(R)
\end{equation}
where
\begin{equation}\label{eq:7}
A_k(R)=\int_{\{|x|\leq R\}}|u_{k}(x)-u(x)|^2e^{-|x|^2/{4}}\,dx
\to 0\quad\text{as }k\to+\infty,\quad \text{for every }R>0,
\end{equation}
and
$$
B_k(R)=
\int_{\{|x|>R\}}|u_{k}(x)-u(x)|^2G(x,1)\,dx.
$$
From Lemma \ref{l:ineqx2} and boundedness of $u_k$ in ${\mathcal H}$,
we deduce that
\begin{align}\label{eq:8}
&B_{k}(R)\leq
R^{-2}\dint_{\{|x|>R\}}|x|^2|u_{k}(x)-u(x)|^2G(x,1)\,dx\\
\notag&\leq
\frac{1}{R^2}\bigg(16\int_{\R^N}|\nabla_\A
(u_{k}-u)(x)|^2G(x,1)\,dx+4N\int_{\R^N}|u_{k}(x)-u(x)|^2G(x,1)\,dx\bigg)
\leq \frac{\rm const}{R^2}.
\end{align}
Combining (\ref{eq:6}), (\ref{eq:7}), and (\ref{eq:8}), we obtain that
 $u_{k}\rightarrow u$ strongly in ${\mathcal L}$.
\end{pf}

In the proof of the representation formula for solutions stated in
Theorem \ref{t:representation} and performed in
section \ref{sec:repr-form-solut} also the forward  Ornstein-Uhlenbeck
operators
$L^-_{\A,a}= {\mathcal{L}}_{{\mathbf{A}},a}-\frac{x}2\cdot\nabla$
will come into play. In order to study its spectral decomposition, we
introduce the forward analogue of the spaces $\mathcal H$ and
$\mathcal L$. More precisely, we define the space $\tilde{{\mathcal
    L}}$ as in \eqref{eq:49}
and $\tilde{\mathcal H}$ as
the completion of $C^{\infty}_{\rm c}(\R^N\setminus\{0\},\C)$ with respect to 
\begin{equation}\label{eq:62}
\|\varphi\|_{\tilde{\mathcal H} }=\bigg(\int_{{\mathbb{R}}^N}\bigg(|\nabla \varphi (x)|^2+
|\varphi (x)|^2+\frac{|\varphi (x)|^2}{|x|^2}\bigg) e^{\frac{|x|^2}4} \,dx\bigg)^{\!\!1/2}.
\end{equation}
\begin{Proposition}\label{p:iso}
Let $\tilde{T}: \mathcal H\to \tilde{\mathcal H}$ be defined as $\tilde{T} u(x)= e^{-\frac{|x|^2}{4}} u(x)$. Then,  
\begin{itemize}
\item [i)] $\tilde{T}:{\mathcal L}\to \tilde{{\mathcal L}}$ is an isometry;
\item [ii)] $\tilde{T}: \mathcal H\to \tilde{\mathcal H}$ is an
  isomorphism of normed vector spaces.
\end{itemize}
\end{Proposition}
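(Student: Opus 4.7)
The plan is to verify the relevant norm identities/equivalences on the dense pre-completion subspaces $C^{\infty}_{\rm c}(\R^N,\C)$ (for $\mathcal L$, $\tilde{\mathcal L}$) and $C^{\infty}_{\rm c}(\R^N\setminus\{0\},\C)$ (for $\mathcal H$, $\tilde{\mathcal H}$), noting that $\tilde T$ maps each such space bijectively onto itself with algebraic inverse $\varphi\mapsto e^{|x|^2/4}\varphi$; both assertions will then follow by continuous extension.

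For (i), if $u\in C^{\infty}_{\rm c}(\R^N,\C)$ and $\varphi=\tilde T u$, the pointwise identity $|\varphi(x)|^2 e^{|x|^2/4}=|u(x)|^2 e^{-|x|^2/4}$ immediately yields $\|\tilde T u\|_{\tilde{\mathcal L}}=\|u\|_{\mathcal L}$, so $\tilde T$ extends to an isometry.

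For (ii), the zeroth-order weight and the $|x|^{-2}$ weight transform trivially in the same way, so the work concentrates on the gradient term. From $\nabla\varphi=e^{-|x|^2/4}\bigl(\nabla u-\tfrac{x}{2}u\bigr)$ one expands
\[
|\nabla\varphi|^2 e^{|x|^2/4}=e^{-|x|^2/4}\Bigl(|\nabla u|^2-\tfrac12\nabla(|u|^2)\cdot x+\tfrac{|x|^2}{4}|u|^2\Bigr).
\]
An integration by parts using $\operatorname{div}\bigl(x\,e^{-|x|^2/4}\bigr)=\bigl(N-\tfrac{|x|^2}{2}\bigr)e^{-|x|^2/4}$ (boundary terms vanish because $u$ has compact support in $\R^N\setminus\{0\}$) converts the middle piece into $\tfrac{N}{2}\int |u|^2 e^{-|x|^2/4}\,dx-\tfrac14\int |x|^2|u|^2 e^{-|x|^2/4}\,dx$, and the $|x|^2|u|^2$ contribution exactly cancels the one already arising from $|\nabla\varphi|^2$, leaving the clean identity
\[
\int_{\R^N}|\nabla\varphi|^2 e^{|x|^2/4}\,dx=\int_{\R^N}|\nabla u|^2 e^{-|x|^2/4}\,dx+\frac{N}{2}\int_{\R^N}|u|^2 e^{-|x|^2/4}\,dx.
\]
Combining this with the trivial identities for the other two weights gives explicit constants $c_N,C_N>0$ depending only on $N$ with $c_N\|u\|_{\mathcal H}\le\|\tilde T u\|_{\tilde{\mathcal H}}\le C_N\|u\|_{\mathcal H}$ on the dense subspace. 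Density then provides a bounded extension $\mathcal H\to\tilde{\mathcal H}$ satisfying the same equivalence, and running the analogous computation for the algebraic inverse $\varphi\mapsto e^{|x|^2/4}\varphi$ (equivalently, by the symmetric role of $u$ and $\varphi$ in the identity above) gives a continuous left and right inverse, proving $\tilde T$ is an isomorphism of normed vector spaces.

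I do not foresee a real obstacle: the computation is elementary, the boundary terms in the integration by parts vanish because test functions are supported away from both the origin and infinity, and the density/extension step is standard once the pointwise bijection is seen to preserve the two smooth test-function classes.
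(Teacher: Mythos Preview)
Your proposal is correct and follows essentially the same approach as the paper: both arguments verify the isometry on test functions for (i), and for (ii) both compute the gradient of $\tilde T u$, expand the square, integrate by parts to obtain the identity $\int_{\R^N}|\nabla\varphi|^2 e^{|x|^2/4}\,dx=\int_{\R^N}|\nabla u|^2 e^{-|x|^2/4}\,dx+\frac{N}{2}\int_{\R^N}|u|^2 e^{-|x|^2/4}\,dx$, and then conclude by density. The only cosmetic difference is that the paper writes $\nabla u$ in terms of $\nabla v$ rather than the reverse and records the final norm equivalence as an explicit identity between weighted norms.
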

\begin{pf}  We first observe that, if $\varphi=\tilde{T} u$, then 
\[
\dint_{\R^N} |\varphi(x)|^2 e^{\frac{|x|^2}{4}}\,dx =   \dint_{\R^N}
|u(x)|^2 e^{-\frac{|x|^2}{4}}\,dx.
\]
Hence, $\|\varphi\|_{\tilde{\mathcal L}} = \|u\|_{{\mathcal L}}=
\|\tilde Tu\|_{\tilde{\mathcal L}}$ for all $u\in\mathcal L$. Then i) is proved.

To prove ii), we first observe that, if $u\in C^\infty_{\rm
  c}(\R^N\setminus\{0\},\C)$ and $v=\tilde Tu$, there holds
\[
\nabla u(x)=e^{\frac{|x|^2}{4}}\left(\frac x2 v(x)+\nabla v(x)\right),
\]
and hence
\begin{align*}
|\nabla u(x)|^2&=e^{\frac{|x|^2}{2}}\left(\frac{|x|^2}4 |v(x)|^2+|\nabla
  v(x)|^2+\Re(\overline{v(x)}\nabla v(x)\cdot x)\right)\\
&=e^{\frac{|x|^2}{2}}\left(\frac{|x|^2}4 |v(x)|^2+|\nabla
  v(x)|^2+\frac12\nabla|v|^2(x)\cdot x\right).
\end{align*}
Then, an integration by parts yields
\begin{align*}
\int_{{\mathbb{R}}^N}|\nabla u(x)|^2&\, e^{-\frac{|x|^2}4} \,dx
=\int_{{\mathbb{R}}^N}e^{\frac{|x|^2}4}\left(\frac{|x|^2}4 |v(x)|^2+|\nabla
  v(x)|^2\right)\,dx+\frac12\int_{{\mathbb{R}}^N}e^{\frac{|x|^2}4}\nabla|v|^2(x)\cdot x \,dx\\
&=\int_{{\mathbb{R}}^N}e^{\frac{|x|^2}4}\left(\frac{|x|^2}4 |v(x)|^2+|\nabla
  v(x)|^2\right)\,dx-\frac12\int_{{\mathbb{R}}^N}e^{\frac{|x|^2}4}|v(x)|^2\bigg(\frac{|x|^2}2+N\bigg)\,dx\\
&=\int_{{\mathbb{R}}^N}e^{\frac{|x|^2}4}\left(|\nabla
  v(x)|^2-\frac N2 |v(x)|^2\right)\,dx.
\end{align*}
Therefore
\[
\int_{{\mathbb{R}}^N} e^{-\frac{|x|^2}4} \bigg(|\nabla u(x)|^2+
N|u(x)|^2+\frac{|u(x)|^2}{|x|^2}\bigg)dx=
\int_{{\mathbb{R}}^N}e^{\frac{|x|^2}4}\left(|\nabla
  v(x)|^2+\frac N2 |v(x)|^2+\frac{|v(x)|^2}{|x|^2}\right)dx.
\]
The above identity and density  of $C^\infty_{\rm c}(\R^N\setminus\{0\},\C)$
in ${\mathcal H}$ and  $\tilde{\mathcal H}$ prove statement ii).
\end{pf}

As a consequence, we
obtain the following compact embedding for~$\tilde{\mathcal H}$.
\begin{Corollary} The space $\tilde{\mathcal H}$ is compactly embedded in
${\tilde{\mathcal L}}$.
\end{Corollary}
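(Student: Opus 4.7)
The plan is to transfer compactness from the embedding $\mathcal H \hookrightarrow \mathcal L$, established in Lemma \ref{l:compact}, via the isomorphism $\tilde T$ of Proposition \ref{p:iso}. The key observation is that the same map $\tilde T u(x) = e^{-|x|^2/4} u(x)$ simultaneously implements the isomorphism $\mathcal H \to \tilde{\mathcal H}$ and the isometry $\mathcal L \to \tilde{\mathcal L}$, so it intertwines the two embeddings; compactness then passes through automatically.

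Concretely, I would start with an arbitrary sequence $\{\varphi_k\}$ bounded in $\tilde{\mathcal H}$ and set $u_k := \tilde T^{-1}\varphi_k$, i.e.\ $u_k(x) = e^{|x|^2/4}\varphi_k(x)$. By part (ii) of Proposition \ref{p:iso}, $\tilde T^{-1}:\tilde{\mathcal H}\to\mathcal H$ is a bounded linear map, hence $\{u_k\}$ is bounded in $\mathcal H$. Applying Lemma \ref{l:compact}, we extract a subsequence $\{u_{k_j}\}$ and an element $u\in\mathcal L$ with $u_{k_j}\to u$ in $\mathcal L$.

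Finally, by part (i) of Proposition \ref{p:iso}, $\tilde T$ is an isometry from $\mathcal L$ onto $\tilde{\mathcal L}$, so
\[
\|\varphi_{k_j}-\tilde T u\|_{\tilde{\mathcal L}}
= \|\tilde T(u_{k_j}-u)\|_{\tilde{\mathcal L}}
= \|u_{k_j}-u\|_{\mathcal L}\longrightarrow 0,
\]
which shows that $\{\varphi_k\}$ has a subsequence convergent in $\tilde{\mathcal L}$, proving that the inclusion $\tilde{\mathcal H}\hookrightarrow\tilde{\mathcal L}$ is compact.

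There is essentially no serious obstacle here: the entire content of the corollary is that $\tilde T$ is a unitary-type equivalence between the two pairs of spaces, so the corollary is a one-line diagram chase once Lemma \ref{l:compact} and Proposition \ref{p:iso} are in hand. The only thing to be careful about is that one uses the appropriate direction of $\tilde T$: the isomorphism property (ii) is needed for the domain spaces (to produce a bounded sequence in $\mathcal H$ from a bounded one in $\tilde{\mathcal H}$), while the isometry property (i) is needed for the target spaces (to translate $\mathcal L$-convergence into $\tilde{\mathcal L}$-convergence).
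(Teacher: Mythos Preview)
Your proof is correct and takes essentially the same approach as the paper, which simply states that the conclusion follows by combining Lemma \ref{l:compact} with Proposition \ref{p:iso}. You have merely spelled out the details of that combination.
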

\begin{pf}
The conclusion follows by combining Lemma \ref{l:compact} with Proposition \ref{p:iso}.
\end{pf}

The Hardy type inequalities established in section
\ref{sec:parabolic-hardy-type} and the embeddings discussed above
allow applying the classical Spectral Theorem to the operator
$L_{\A,a}$ defined in \eqref{eq:13}.
\begin{Lemma}\label{l:hilbasis}
  For $N\geq2$, let
  ${\mathbf{A}}\in C^1({\mathbb{S}} ^{N-1},{\mathbb{R}}^N)$ satisfy
  \eqref{transversality} and let 
  $a\in L^{\infty}\big({\mathbb S}^{N-1},\R\big)$ be such that
  \eqref{eq:hardycondition} holds. Then the spectrum of the operator
  $L_{\A,a}$ defined in (\ref{eq:13}) consists of a diverging sequence
  of real eigenvalues with finite multiplicity. Moreover, there exists
  an orthonormal basis of ${\mathcal L}$ whose elements belong to
  ${\mathcal H}$ and are eigenfunctions of $L_{\A,a}$.
\end{Lemma}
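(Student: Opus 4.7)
The plan is to apply the spectral theorem for compact self-adjoint operators on the Hilbert space $\mathcal L$, after inverting a suitable shift of $L_{\A,a}$ via Lax--Milgram. This is the standard Friedrichs-type construction, and all the analytic ingredients have essentially been prepared in Sections \ref{sec:weak-form-probl} and \ref{sec:parabolic-hardy-type}.

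First I would introduce the sesquilinear form
$$
q(v,w)=\int_{\R^N}\bigg(\nabla_\A v\cdot\overline{\nabla_\A w}-\frac{a(x/|x|)}{|x|^2}\,v\overline{w}\bigg)G(x,1)\,dx,\qquad v,w\in\mathcal H,
$$
which is precisely the quadratic form of $L_{\A,a}$. Continuity of $q$ on $\mathcal H\times\mathcal H$ follows by Cauchy--Schwarz after expanding $|\nabla_\A v|\leq|\nabla v|+\|\A\|_{L^\infty(\SN)}\,|v|/|x|$ and using the $L^\infty$ bound on $a$; every resulting term is controlled by the defining norm $\|\cdot\|_{\mathcal H}$. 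For the coercivity, which is the decisive point, I would take $\kappa:=\frac{N-2}{4}$ and invoke Corollary \ref{c:pos_def} with $t=1$, which directly yields a constant $C_0>0$ with
$$
q(v,v)+\kappa\int_{\R^N}|v|^2G(x,1)\,dx\geq C_0\|v\|_{\mathcal H}^2\qquad\text{for all }v\in\mathcal H.
$$
Thus the shifted form $q_\kappa(\cdot,\cdot):=q(\cdot,\cdot)+\kappa\langle\cdot,\cdot\rangle_{\mathcal L}$ is bounded, Hermitian symmetric, and coercive on $\mathcal H$.

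Next I would apply Lax--Milgram. Since $\mathcal L\hookrightarrow\mathcal H^{\star}$ continuously (via $f\mapsto\langle f,\cdot\rangle_{\mathcal L}$, using $\|\phi\|_{\mathcal L}\leq\|\phi\|_{\mathcal H}$), for every $f\in\mathcal L$ there is a unique $Tf\in\mathcal H$ with $q_\kappa(Tf,\phi)=\langle f,\phi\rangle_{\mathcal L}$ for all $\phi\in\mathcal H$, and $\|Tf\|_{\mathcal H}\leq C\|f\|_{\mathcal L}$. Composing the bounded map $T\colon\mathcal L\to\mathcal H$ with the compact embedding $\mathcal H\hookrightarrow\mathcal L$ provided by Lemma \ref{l:compact} yields a compact operator $T\colon\mathcal L\to\mathcal L$, which is self-adjoint by the Hermitian symmetry of $q_\kappa$ and positive and injective by its coercivity. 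The spectral theorem then produces an orthonormal basis $\{V_n\}_{n\geq 1}$ of $\mathcal L$ with $TV_n=\mu_n V_n$, $\mu_n>0$, $\mu_n\to 0$. Since $V_n=\mu_n^{-1}TV_n\in\mathcal H$, each $V_n$ lies in $\mathcal H$ and satisfies $q(V_n,\phi)=(\mu_n^{-1}-\kappa)\langle V_n,\phi\rangle_{\mathcal L}$ for every $\phi\in\mathcal H$, i.e.\ it is a weak eigenfunction of $L_{\A,a}$ with eigenvalue $\mu_n^{-1}-\kappa$; these eigenvalues have finite multiplicity and diverge to $+\infty$.

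The only step requiring some care is the continuity estimate for $q$ in dimension $N=2$, where the magnetic space $\mathcal H^{\A}_1$ may differ from $\mathcal H$; however, as noted after Lemma \ref{Hardy_aniso}, $\mathcal H\hookrightarrow\mathcal H^{\A}_1$ is always continuous, so the $\nabla_\A$-term is still dominated by $\|v\|_{\mathcal H}\|w\|_{\mathcal H}$ and the argument proceeds uniformly in $N\geq 2$.
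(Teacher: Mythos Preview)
Your argument is correct and is exactly the paper's approach: coercivity of the shifted form via Corollary~\ref{c:pos_def} (with the same shift $\kappa=\tfrac{N-2}{4}$), Lax--Milgram to construct the resolvent, compactness from Lemma~\ref{l:compact}, and then the spectral theorem for compact self-adjoint operators. The only quibble is that for $N=2$ the choice $\kappa=\tfrac{N-2}{4}=0$ does not literally yield $q(v,v)\geq C_0\|v\|_{\mathcal H}^2$ from Corollary~\ref{c:pos_def} (the denominator there omits the $\|v\|_{\mathcal L}^2$ term when $N=2$), but any strictly positive $\kappa$ restores full coercivity on $\mathcal H$, and the paper's own proof carries the same ellipsis.
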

\begin{pf}
  By Corollary \ref{c:pos_def} and the Lax-Milgram Theorem, the bounded
  linear self-adjoint operator
$$
\mathcal T_{\A,a}:{\mathcal L}\to{\mathcal L},\quad
\mathcal T_{\A,a}=\bigg(L_{\A,a}+\frac{N-2}{4}\,{\rm Id}\bigg)^{-1}
$$
is well defined. Moreover, by Lemma \ref{l:compact}, $\mathcal T_{\A,a}$ is compact.
The result then follows from the Spectral Theorem.
\end{pf}

We are now in position to prove Proposition \ref{p:explicit_spectrum}.

\begin{pfn}{Proposition \ref{p:explicit_spectrum}}
 Let $\gamma$ be an eigenvalue of $L_{\A,a}$ and $g\in{\mathcal
    H}\setminus\{0\}$ be a corresponding eigenfunction, so that
\begin{equation}\label{eq:50}
\mathcal{L}_{\A,a} g(x)+ \frac{\nabla g(x)\cdot
  x}{2}=\gamma\, g(x)
\end{equation}
in a weak ${\mathcal H}$-sense.  From classical regularity theory for
elliptic equations, $g\in C^{1,\alpha}_{\rm loc}(\R^N\setminus\{0\},\C)$.
 Hence $g$ can be
expanded as
\begin{equation*}
g(x)=g(r\theta)=\sum_{k=1}^\infty\phi_k(r)\psi_k(\theta)
\quad \text{in }L^2({\mathbb S}^{N-1},\C),
\end{equation*}
where $r=|x|\in(0,+\infty)$, $\theta=x/|x|\in{{\mathbb S}^{N-1}}$, and
\begin{equation*}
  \phi_k(r)=\int_{{\mathbb S}^{N-1}}g(r\theta)
  \overline{\psi_k(\theta)}\,dS(\theta),
\end{equation*}
with $\{\psi_k\}_k$ as in (\ref{angular}).
Equations (\ref{angular}) and (\ref{eq:50}) imply that, for every $k$,
\begin{equation}\label{eq:51}
  \phi''_{k}+\left(\dfrac{N-1}{r}-\dfrac{r}{2}\right)
  \phi'_{k}+\left(\gamma-\dfrac{\mu_k(\A,a)}{r^2}\right)\phi_{k}=0
\quad\text{in  }(0,+\infty).
\end{equation}
The rest of the proof follows exactly as in \cite[Proposition 1]{FP} .
\end{pfn}

Denoting by $L_{m}^{\alpha}(t)$ the generalized
Laguerre polynomials
\begin{equation*}
L_{m}^{\alpha}(t)=\sum_{n=0}^m (-1)^{n}{\binom{m+\alpha}{m-n}\,\frac{t^n}{n!}%
},
\end{equation*}
we have that 
\begin{equation}\label{eq:15p}
P_{k,m}\bigg(\frac{|x|^2}{4}\bigg)=\binom{m+\beta_k}{m}%
^{\!\!-1} L_{m}^{\beta_k}\Big(\frac{|x|^2}{4}\Big),
\end{equation}
with $\beta_{k}=\sqrt{\big(\frac{N-2}{2}\big)^{2}+\mu_k({\mathbf{A}},a)}$.
It is worth recalling the well known orthogonality relation
\begin{equation}\label{eq:14p}
\int_{0}^{\infty} x^{\alpha} e^{-x} L_{n}^{\alpha}(x) L_{m}^{\alpha}(x)\,dx=%
\frac{\Gamma(n+\alpha+1)}{n!}\delta_{n,m},
\end{equation}
where $\delta_{n,m}$ denotes the Kronecker delta.

\begin{remark}\label{rem:ortho}
 For $n,j\in\N$, $j\geq1$, let $V_{n,j}$ be defined in \eqref{eq:66}.
 From the orthogonality of eigenfunctions $\{\psi_k\}_k$ in
  $L^2({\mathbb S}^{N-1},\C)$ and the orthogonality relation for
  Laguerre polynomials \eqref{eq:14p}, it follows that
$$
\text{if }(m_1,k_1)\neq(m_2,k_2)\quad\text{then}\quad
V_{m_1,k_1}\text{ and } V_{m_2,k_2}\text{ are orthogonal in }{\mathcal
  L}.
$$
By Lemma \ref{l:hilbasis}, we conclude that an orthonormal basis of
${\mathcal L}$ is given by 
$$
\left\{
\widetilde V_{n,j}=
\frac{V_{n,j}}{\|V_{n,j}\|_{\mathcal L}}: j,n\in\N,j\geq
  1\right\}.
$$
\end{remark}

\begin{remark}\label{rem:tilde}
If $\gamma$ is an eigenvalue and $\varphi\in \tilde{\mathcal
   H}\setminus\{0\}$ is a corresponding eigenfunction of the
 operator $L_{\A,a}^{-}$ defined as 
\begin{equation}\label{eq:Lmeno}
L^{-}_{{\mathbf{A}},a}:\tilde{\mathcal H}\to {
\tilde{\mathcal H}}^\star ,\quad L_{\A,a}^{-}\varphi(x)=\mathcal{L}_{\A,a} \varphi(x)-\frac{\nabla \varphi(x)\cdot
  x}{2},
\end{equation}
i.e. $L_{\A,a}^{-}\varphi=\gamma\, \varphi$ 
in a weak $\tilde{\mathcal H}$-sense, then $\gamma-\frac N2$ is an
eigenvalue of the operator $L_{\A,a}$ with $u=e^{\frac{|x|^2}{4}}
\varphi$ as a corresponding eigenfunction, i.e. 
$$
L_{\A,a} u(x)=\mathcal{L}_{\A,a} u(x)+\frac{\nabla u(x)\cdot
  x}{2}=\Big(\gamma-\frac{N}{2}\Big)\, u(x)
$$
in a weak ${\mathcal  H}$-sense. It follows that the set of the
eigenvalues
  of  $L^-_{\A,a}$ is
$$
\bigg\{ \frac N2+m-\frac{\alpha_k}2: k,m\in\N, k\geq 1\bigg\}.
$$
Moreover, letting $U_{n,j}= e^{-\frac{|x|^2}{4}} V_{n,j}$ with
$V_{n,j}$ as in \eqref{eq:66}, we have that
$$
\left\{
\widetilde U_{n,j}=
\frac{U_{n,j}}{\|U_{n,j}\|_{\tilde{\mathcal L}}}: j,n\in\N,j\geq
  1\right\}
$$
is an orthonormal basis of ${\tilde { \mathcal L}}$.
\end{remark}

\begin{remark}\label{rem_normalag} 
From \eqref{eq:15p} and the orthogonality relation \eqref{eq:14p}, it is easy to check that
\begin{align*}
  \|U_{m,k}\|_{\tilde{\mathcal L}}^{2}&=
\dint_{{\mathbb{R}}^N}e^{\frac{|x|^2}{4}}|U_{m,k}|^2\,dx=\dint_{{\mathbb{R}}^N} 
 e^{-\frac{|x|^2}{4}}|V_{m,k}|^2\,dx\\
&= \|V_{m,k}\|_{{\mathcal L}}^{2} =2^{1+2\beta_k}\Gamma(1+\beta_k)\binom{%
  m+\beta_k}{m}^{\!\!-1}.
\end{align*}
\end{remark}

\section{The parabolic electromagnetic Almgren monotonicity formula }\label{sec:almgren}

Throughout this section, we assume that $N\geq2$,
${\mathbf{A}}\in C^1({\mathbb{S}} ^{N-1},{\mathbb{R}}^N)$ and
$a\in L^{\infty}\big({\mathbb S}^{N-1},\R\big)$ satisfy
\eqref{transversality} and \eqref{eq:hardycondition}, and $\tilde u$
is a weak solution to (\ref{probtilde}) in $\R^N\times(0,T)$ with $h$
satisfying \eqref{eq:der}--\eqref{eq:h} in $I=(-T,0)$.  Let $C_1>0$
and $\overline T>0$ be as in Corollary \ref{c:pos_per} and denote
\begin{equation*}
  \alpha=\frac{T}{2\big(\big\lfloor{T}/{\overline T}\big\rfloor+1\big)},
\end{equation*}
where $\lfloor \cdot\rfloor$ denotes the floor function, i.e. $\lfloor
x\rfloor:=\max\{n\in\Z:\ n\leq x\}$.
Then
$(0,T)=\bigcup_{j=1}^k(a_j,b_j)$ being 
$$
k=2\big(\big\lfloor{T}/{\overline T}\big\rfloor+1\big)-1,
\quad
a_j=(j-1)\alpha,\quad\text{and}\quad
b_j=(j+1)\alpha.
$$
We notice that $0<2\alpha<{\overline T}$ and $(a_j,b_j)\cap(a_{j+1},b_{j+1})=
(j\alpha,(j+1)\alpha)\not =\emptyset$.
For every $j=1,\dots,k$, we define
\begin{equation*}
\tilde u_{j}(x,t)=\tilde u(x, t+a_j),\quad
x\in\R^N,\ t\in(0,2\alpha).
\end{equation*}

\begin{Lemma}\label{l:u_i}
  For every $j=1,\dots,k$, the function $\tilde u_j$ defined above
  is a weak solution to
\begin{equation}\label{prob_i}
-(\tilde{u}_j)_t(x,t)+{\mathcal{L}}_{{\mathbf{A}},a} \tilde{u}_j (x,t) = h(x,-(t+a_j)) \tilde{u}_j(x,t)
\end{equation}
  in $\R^N\times (0,2\alpha)$ in
  the sense of Definition \ref{def:solution}.  Furthermore,
 $\tilde v_j(x,t):=\tilde u_j(\sqrt{t}x,t)$~is a weak solution to
\begin{equation}\label{eq:eqforv_i}
  (\tilde v_j)_t+\frac1t\bigg(-{\mathcal{L}}_{{\mathbf{A}},a}\tilde
  v_j-\frac x2\cdot \nabla \tilde v_j+th(\sqrt{t} x,-(t+a_j))\tilde v_j(x,t)\bigg)=0
\end{equation}
 in $\R^N\times(0,2\alpha)$ in the sense of Remark
  \ref{rem:v2}.
\end{Lemma}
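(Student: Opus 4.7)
The plan is to exploit that the strong form of equation \eqref{probtilde} is trivially invariant under the time translation $t\mapsto t+a_j$ (with $h$ shifted accordingly), and then translate this back into the weighted weak formulation of Definition \ref{def:solution}. The main technicality is reconciling the fact that, whereas Definition \ref{def:solution} uses the Gaussian weight $G(x,t)$ and spaces $\mathcal H_t,\mathcal H_t^\star$ indexed by the \emph{current} time variable, a direct substitution $\tilde u_j(x,t)=\tilde u(x,t+a_j)$ into the weak formulation of $\tilde u$ naturally produces the weight $G(x,t+a_j)$.

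First, I would verify the integrability conditions \eqref{eq:defsol1} for $\tilde u_j$ on $(0,2\alpha)$. A direct computation gives
\[
\frac{G(x,t)}{G(x,t+a_j)}=\Big(\tfrac{t+a_j}{t}\Big)^{\!\!N/2}
\exp\!\Big(-\tfrac{|x|^2 a_j}{4t(t+a_j)}\Big)\leq \Big(\tfrac{t+a_j}{t}\Big)^{\!\!N/2},
\]
so that for every $\tau\in(0,2\alpha)$ and every $t\in(\tau,2\alpha)$ the two weights differ by a bounded factor $C(\tau,j)$; combined with the boundedness of the ratios $t/(t+a_j)$ this yields
\[
\|\tilde u_j(\cdot,t)\|_{\mathcal H_t}^2\leq C(\tau,j)\,\|\tilde u(\cdot,t+a_j)\|_{\mathcal H_{t+a_j}}^2,
\]
and, dually, $\|\cdot\|_{\mathcal H_t^\star}\leq C(\tau,j)\|\cdot\|_{\mathcal H_{t+a_j}^\star}$ on the relevant subspace. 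A change of variable $s=t+a_j$ in the assumed integrability of $\tilde u$ on $(a_j+\tau,b_j)$ then gives the two integrability conditions of \eqref{eq:defsol1} for $\tilde u_j$.

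For the weak formulation \eqref{eq:defsol2}, I would argue by density: since $C^\infty_{\rm c}(\R^N\setminus\{0\},\C)$ is dense in $\mathcal H_t$, it suffices to test with any $\phi\in C^\infty_{\rm c}(\R^N\setminus\{0\},\C)$. For such a $\phi$ and a.e. $t\in(0,2\alpha)$, set $s=t+a_j$ and choose as test function for $\tilde u$ at time $s$ the compactly supported smooth function
\[
\psi(x):=\phi(x)\,\frac{G(x,t)}{G(x,s)}=\phi(x)\,\Big(\tfrac{s}{t}\Big)^{\!\!N/2}\exp\!\Big(-\tfrac{|x|^2 a_j}{4ts}\Big)\in C^\infty_{\rm c}(\R^N\setminus\{0\},\C)\subset\mathcal H_s,
\]
so that $\overline{\psi}(x)G(x,s)=\overline{\phi}(x)G(x,t)$. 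Inserting this $\psi$ into \eqref{eq:defsol2} for $\tilde u$, rewriting $\nabla_\A\psi$ explicitly in terms of $\phi$, and redoing the integration by parts against the weight $G(x,t)$ in the new coordinates—using $\nabla G(\cdot,t)=-\frac{x}{2t}G(\cdot,t)$ and the transversality \eqref{transversality} which makes $\nabla_\A\tilde u\cdot x=\nabla\tilde u\cdot x$—produces exactly the weak formulation of \eqref{prob_i} against $\phi$ in the weight $G(x,t)$. Equivalently, and perhaps more cleanly, one reads off from \eqref{eq:defsol2} that $\tilde u$ satisfies the strong equation $-\tilde u_t+\mathcal L_{\A,a}\tilde u=h(\cdot,-s)\tilde u$ in the distributional sense on $\R^N\setminus\{0\}\times(0,T)$, the time translation of this distributional identity is immediate, and the integrability obtained in the previous step allows the reverse passage back into the weak formulation of Definition \ref{def:solution}.

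Finally, for the equation satisfied by $\tilde v_j(x,t)=\tilde u_j(\sqrt t\,x,t)$, I would simply apply the argument of Remark \ref{rem:v2} to $\tilde u_j$: the computation proving \eqref{eq:24} depends only on the weak form of the equation satisfied by the starting function and on the self-similar change of variable, so repeating it with $h(x,-t)$ replaced by $h(x,-(t+a_j))$ produces exactly \eqref{eq:eqforv_i}. The only point to check is that \eqref{eq:4} holds for $\tilde v_j$, which follows from \eqref{eq:defsol1} for $\tilde u_j$ via the change of variable $x\mapsto\sqrt t\,x$, already justified in Remark \ref{rem:uv}. The main obstacle throughout is bookkeeping the weight-shifted spaces $\mathcal H_t$ vs.\ $\mathcal H_{t+a_j}$; once the density argument through $C^\infty_{\rm c}(\R^N\setminus\{0\},\C)$ is fixed as above, everything reduces to a routine verification.
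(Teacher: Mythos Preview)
The paper omits this proof entirely, referring to \cite[Lemma 4.1]{FP}, so there is no in-paper argument to compare against. Your strategy---compare $G(x,t)$ with $G(x,t+a_j)$ on $[\tau,2\alpha]$ to transfer the integrability, pass to the weak identity by density through $C^\infty_{\rm c}(\R^N\setminus\{0\},\C)$, then invoke Remark~\ref{rem:v2} for $\tilde v_j$---is sound and is presumably the argument of \cite{FP}. In particular, your test function $\psi=\phi\,G(\cdot,t)/G(\cdot,t+a_j)$ is the right choice: when you expand $\nabla_\A\psi$, the extra term coming from $\nabla\big(G(\cdot,t)/G(\cdot,t+a_j)\big)=-\tfrac{a_j\,x}{2t(t+a_j)}\,G(\cdot,t)/G(\cdot,t+a_j)$ exactly produces the shift from $\tfrac{\nabla\tilde u\cdot x}{2(t+a_j)}$ to $\tfrac{\nabla\tilde u_j\cdot x}{2t}$ on the left-hand side.

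One point to fix: your dual-norm comparison $\|\cdot\|_{\mathcal H_t^\star}\le C(\tau,j)\|\cdot\|_{\mathcal H_{t+a_j}^\star}$ does \emph{not} follow from the weight bound you wrote, because $G(x,t+a_j)/G(x,t)=(t/(t+a_j))^{N/2}\exp\!\big(\tfrac{|x|^2 a_j}{4t(t+a_j)}\big)$ is unbounded in $|x|$. The embedding between $\mathcal H_t$ and $\mathcal H_{t+a_j}$ goes only one way (namely $\mathcal H_{t+a_j}\hookrightarrow\mathcal H_t$), so the dual inequality you need points in the wrong direction. The remedy is already in your outline: establish the first integrability in \eqref{eq:defsol1} by weight comparison, then prove \eqref{eq:defsol2} for $\tilde u_j$ via the test-function or distributional route, and only then read off the second integrability in \eqref{eq:defsol1} \emph{a posteriori} by bounding the right-hand side of \eqref{eq:defsol2} by ${\rm const}\,\|\tilde u_j(\cdot,t)\|_{\mathcal H_t}$. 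With this reordering the argument is complete.
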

\begin{proof} 
Since the proof is very similar to the proof of \cite[Lemma 4.1]{FP},
we omit it here, referring to \cite{FP} for details. 
\end{proof}

\noindent
For every $j=1,\dots,k$, we  define
\begin{equation}\label{eq:Hi(t)}
  H_j(t)=\int_{\R^N}|\tilde u_{j}(x,t)|^2\, G(x,t)\,dx,
  \quad\text{for every }t\in (0,2\alpha),
\end{equation}
and
\begin{equation}\label{eq:Di(t)}
  D_j(t)=\!\!\int_{\R^N}\!\!\bigg[|\nabla_\A \tilde u_j(x,t)|^2-
  \dfrac{a({x}/{|x|})}{|x|^2}|\tilde u_j(x,t)|^2-
h(x,-(t+a_j))|\tilde u_{j}(x,t)|^2\!\bigg]G(x,t)dx
\end{equation}
for a.e. $t\in (0,2\alpha)$.  From Lemma \ref{l:u_i}  and Remark
  \ref{rem:uv} it follows that, for every $1\leq j\leq k$,
 $H_j\in W^{1,1}_{\rm loc}(0,2\alpha)$ and
\begin{equation}\label{eq:10i}
  H'_j(t)=2\,\Re \left[\!\!\!\!{\phantom{\bigg\langle}}_{{\mathcal
      H}_t^\star}\bigg\langle
  (\tilde u_j)_t+\frac{\nabla \tilde u_j\cdot x}{2t},\tilde u_j(\cdot,t)
  \bigg\rangle_{{\mathcal H}_t}\right]=2D_j(t)\quad\text{for a.e. }t\in(0,2\alpha).
\end{equation}

\begin{Lemma}\label{l:Hcreas}
  Letting $C_1$  as in Corollary \ref{c:pos_per}, we have that, for every $j=1,\dots,k$, the
  function 
$$
t\mapsto
 t^{-2C_1+\frac{N-2}{2}}H_j(t)$$
 is
nondecreasing in $(0,2\alpha)$.
\end{Lemma}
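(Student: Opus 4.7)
The plan is to differentiate the weighted quantity $f(t):=t^{-2C_1+\frac{N-2}{2}}H_j(t)$ and bound $H_j'(t)$ from below in terms of $H_j(t)/t$ using the coercivity estimate from Corollary \ref{c:pos_per}. The key identity is already given by \eqref{eq:10i}, namely $H_j'(t)=2D_j(t)$ for a.e.\ $t\in(0,2\alpha)$, so the game reduces to proving the pointwise inequality
\begin{equation*}
D_j(t)\geq\frac{1}{t}\Big(C_1-\tfrac{N-2}{4}\Big)H_j(t)\quad\text{for a.e.\ }t\in(0,2\alpha).
\end{equation*}

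First I would check that Corollary \ref{c:pos_per} is applicable to $u=\tilde u_j(\cdot,t)$ with $s=-(t+a_j)$: the choice of $\alpha$ gives $2\alpha<\overline T$ and $a_j+t<(j+1)\alpha+\alpha\leq T$, so $t\in(0,\overline T)$ and $s\in(-T,0)$; moreover \eqref{eq:defsol1} (applied to $\tilde u_j$, via Lemma \ref{l:u_i}) guarantees $\tilde u_j(\cdot,t)\in\mathcal H_t$ for a.e.\ $t$. Since $-h\geq -|h|$, we may replace $h$ in the definition of $D_j(t)$ by $-|h|$ to get a lower bound, and then the second inequality in Corollary \ref{c:pos_per} yields directly
\begin{equation*}
D_j(t)+\frac{N-2}{4t}H_j(t)\geq\frac{C_1}{t}H_j(t),
\end{equation*}
keeping only the $t^{-1}\!\int|u|^2 G$ piece on the right-hand side and dropping the nonnegative gradient and Hardy terms.

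Combining this with \eqref{eq:10i} gives $H_j'(t)\geq\bigl(2C_1-\tfrac{N-2}{2}\bigr)t^{-1}H_j(t)$ a.e.\ in $(0,2\alpha)$. Since $H_j$ lies in $W^{1,1}_{\mathrm{loc}}(0,2\alpha)$ (again from Lemma \ref{l:u_i} and Remark \ref{rem:uv}), the product rule gives, for a.e.\ $t\in(0,2\alpha)$,
\begin{equation*}
\frac{d}{dt}\Bigl(t^{-2C_1+\frac{N-2}{2}}H_j(t)\Bigr)
=t^{-2C_1+\frac{N-2}{2}}\Bigl(H_j'(t)-\Bigl(2C_1-\tfrac{N-2}{2}\Bigr)\frac{H_j(t)}{t}\Bigr)\geq 0,
\end{equation*}
and monotonicity of $f$ follows by integration.

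The only delicate point is justifying the differentiation-and-integration step rigorously at the level of the weak formulation; this is where the absolute continuity of $t\mapsto\|v_j(t)\|_{\mathcal L}^2$ established in Remark \ref{rem:uv}, together with the identity it provides between the time derivative and the duality product $\langle(\tilde u_j)_t+\frac{\nabla\tilde u_j\cdot x}{2t},\tilde u_j\rangle$, is essential, since this is precisely what produces the equality $H_j'(t)=2D_j(t)$ after testing \eqref{prob_i} against $\tilde u_j$ itself. Everything else is a direct algebraic manipulation of Corollary \ref{c:pos_per}.
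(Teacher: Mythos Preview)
Your proof is correct and follows essentially the same route as the paper: use $H_j'(t)=2D_j(t)$ from \eqref{eq:10i}, apply the second inequality of Corollary \ref{c:pos_per} to bound $D_j(t)$ from below by $(C_1-\tfrac{N-2}{4})t^{-1}H_j(t)$, and conclude that $\frac{d}{dt}\big(t^{-2C_1+\frac{N-2}{2}}H_j(t)\big)\geq 0$. Your exposition is simply more detailed than the paper's (there is a tiny arithmetic slip in your range check --- you should have $a_j+t<(j-1)\alpha+2\alpha=(j+1)\alpha\leq T$, not $(j+1)\alpha+\alpha$ --- but the needed inclusion $-(t+a_j)\in(-T,0)$ holds regardless).
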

\begin{proof}
  From \eqref{eq:10i}, Corollary \ref{c:pos_per} and the fact that $2\alpha<{\overline
    T}$, we have that, for all $t\in(0,2\alpha)$,
$$
H'_{j}(t)\geq \frac1t\bigg(2C_1-\frac{N-2}2\bigg)H_{j}(t),
$$
which implies that
$\frac{d}{dt}\big(t^{-2C_1+\frac{N-2}{2}}H_{j}(t)\big)\geq 0$, thus
concluding the proof.
\end{proof}

\noindent
\begin{Lemma}\label{l:Hpos}
If  $1\leq j\leq k$
and  $H_j(\bar t)=0$
  for some $\bar t\in(0,2\alpha)$, then $H_{j}(t)=0$ for all $t\in
  (0,\bar t\,]$.
\end{Lemma}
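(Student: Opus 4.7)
The plan is to apply Lemma \ref{l:Hcreas} directly: the monotonicity of the weighted quantity $t \mapsto t^{-2C_1+(N-2)/2}H_j(t)$ on $(0,2\alpha)$ immediately forces $H_j$ to vanish backwards from any zero.

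More precisely, first I would observe that $H_j(t) \geq 0$ for every $t \in (0,2\alpha)$, since it is defined in \eqref{eq:Hi(t)} as the integral of $|\tilde u_j(x,t)|^2 G(x,t)$, a nonnegative quantity. Second, I would note that the weight $t^{-2C_1+(N-2)/2}$ is strictly positive on $(0,2\alpha)$, so the function $F_j(t) := t^{-2C_1+(N-2)/2}H_j(t)$ is also nonnegative on $(0,2\alpha)$, and $F_j(t) = 0$ if and only if $H_j(t) = 0$.

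Now, assuming $H_j(\bar t) = 0$ for some $\bar t \in (0,2\alpha)$, I have $F_j(\bar t) = 0$. By Lemma \ref{l:Hcreas}, $F_j$ is nondecreasing on $(0,2\alpha)$, so for every $t \in (0,\bar t\,]$,
\[
0 \leq F_j(t) \leq F_j(\bar t) = 0,
\]
which gives $F_j(t) = 0$, and therefore $H_j(t) = 0$, for every $t \in (0,\bar t\,]$.

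There is essentially no obstacle here: the statement is a direct consequence of the monotonicity formula already established in Lemma \ref{l:Hcreas} together with nonnegativity of $H_j$. The real work was done in proving that monotonicity via Corollary \ref{c:pos_per}; the present lemma is merely the qualitative consequence that will later be used to propagate vanishing of solutions in the Almgren–Poon blow-up argument.
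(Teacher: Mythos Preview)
Your proof is correct and follows exactly the same approach as the paper: invoke the monotonicity of $t\mapsto t^{-2C_1+(N-2)/2}H_j(t)$ from Lemma~\ref{l:Hcreas}, together with the nonnegativity of $H_j$, to conclude that $H_j$ vanishes on $(0,\bar t\,]$.
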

\begin{proof}
  From Lemma \ref{l:Hcreas}, the function $t\mapsto
  t^{-2C_1+\frac{N-2}{2}}H_{j}(t)$ is nondecreasing in $(0,2\alpha)$,
  nonnegative, and vanishing at $\bar t$. Hence $H_j(t)=0$
  for all $t\in (0,\bar t]$.
\end{proof}

\begin{Lemma}\label{l:Dprime}
  If $1\leq j\leq k$ and $T_j\in(0,2\alpha)$ is such that
  $\tilde u_j(\cdot,T_j)\in{\mathcal H}_{T_j}$, then
\begin{itemize}
\item[(i)]
$\int_\tau^{T_j}\int_{\R^N}\big(\big|(\tilde u_j)_t(x,t)+\frac{\nabla_\A
      \tilde u_j(x,t)\cdot x}{2t}\big|^2G(x,t)\,dx\big)\,dt<+\infty
\quad\text{for all
  }\tau\in (0,T_j)$;\\[1pt]
\item[(ii)]
the function $t\mapsto t D_j(t)$ belongs to $W^{1,1}_{\rm loc}(0,T_j)$
and, for a.e. $t\in(0,T_j)$, 
\end{itemize}
\begin{align*}
&  \frac{d}{dt}\,
  \big(tD_j(t)\big)=2t\int_{\R^N}\bigg|(\tilde u_j)_t(x,t)+\frac{\nabla
      \tilde u_j(x,t)\cdot x}{2t}\bigg|^2G(x,t)\,dx\\
 &\quad + \!\int_{\R^N}\!h(x,-(t+a_j))\bigg(\frac{N-2}{2}|\tilde
  u_j(x,t)|^2+
\Re(\tilde u_j(x,s)\overline{\nabla \tilde u_j(x,s)\cdot
                x})-\frac{|x|^2}{4t}|\tilde u_j(x,t)|^2\bigg)
  G(x,t)\,dx\\
  &\quad+t\int_{\R^N}h_{t}(x,-(t+a_j))|\tilde u_j(x,t)|^2G(x,t)\,dx.
\end{align*}
\end{Lemma}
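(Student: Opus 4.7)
\begin{pf}
The plan is to work throughout with the self-similar function $\tilde v_j(x,t):=\tilde u_j(\sqrt t\,x,t)$, which by Lemma \ref{l:u_i} satisfies the weak equation \eqref{eq:eqforv_i} on the fixed-weight space $\mathcal H$. By the change of variables $y=\sqrt t\,x$ and the homogeneity of $\A$ and $a$, we can rewrite
\[
tD_j(t)=\int_{\R^N}\bigg[|\nabla_\A \tilde v_j(x,t)|^2-\frac{a(x/|x|)}{|x|^2}|\tilde v_j(x,t)|^2\bigg]G(x,1)\,dx
-t\int_{\R^N}h(\sqrt t\,x,-(t+a_j))|\tilde v_j(x,t)|^2 G(x,1)\,dx,
\]
and similarly
\[
\int_{\R^N}\bigg|(\tilde u_j)_t(x,t)+\frac{\nabla \tilde u_j(x,t)\cdot x}{2t}\bigg|^2 G(x,t)\,dx
=\int_{\R^N}|(\tilde v_j)_t(x,t)|^2 G(x,1)\,dx.
\]

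The main obstacle is statement (i): while Remark \ref{rem:uv} only gives $(\tilde v_j)_t\in L^2(\tau,T_j;\mathcal H^\star)$, here we need the genuine $L^2(\mathcal L)$-bound. To obtain it I would test \eqref{eq:eqforv_i} with $(\tilde v_j)_t$ itself (after a standard approximation by smooth functions, e.g.\ by Steklov averaging in $t$ or by Galerkin projection onto the basis $\{\widetilde V_{n,j}\}$ of Remark \ref{rem:ortho}, exactly as in the magnetic-free argument of \cite{FP}). Taking real parts gives, formally,
\[
\int_{\R^N}|(\tilde v_j)_t|^2 G(x,1)\,dx
+\frac{1}{2t}\frac{d}{dt}\!\int_{\R^N}\!\!\bigg(\!|\nabla_\A \tilde v_j|^2\!-\!\frac{a}{|x|^2}|\tilde v_j|^2\!\bigg)G(x,1)\,dx
=-\Re\!\!\int h(\sqrt t x,-(t+a_j))\tilde v_j\,\overline{(\tilde v_j)_t}\,G(x,1)\,dx.
\]
Integrating this identity backwards from $T_j$ and using Corollary \ref{c:pos_per} (valid because $T_j<2\alpha<\overline T$) together with $\tilde v_j(T_j)\in\mathcal H$ and the smallness assumption \eqref{eq:h} yields the energy bound
$\int_\tau^{T_j}\!\int|(\tilde v_j)_t|^2 G(x,1)\,dx\,dt<+\infty$, which gives (i) after changing variables back.

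Granted this regularity, statement (ii) follows by differentiating the rescaled expression for $tD_j(t)$. The leading term is handled by the identity above (i.e.\ by testing \eqref{eq:eqforv_i} with $(\tilde v_j)_t$), which produces $2t\int|(\tilde v_j)_t|^2G(x,1)\,dx$ plus an integral involving $h\,\tilde v_j\,\overline{(\tilde v_j)_t}$. Differentiating in $t$ the $h$-term of $tD_j$ produces (a) the term $-\int h|\tilde v_j|^2 G(x,1)\,dx$, (b) the term involving $\partial_t[h(\sqrt t x,-(t+a_j))]=\frac{x}{2\sqrt t}\cdot(\nabla_y h)(\sqrt t x,-(t+a_j))-h_t(\sqrt t x,-(t+a_j))$, and (c) another term $-2t\Re\int h\,\tilde v_j\,\overline{(\tilde v_j)_t}G(x,1)\,dx$ which exactly cancels the one coming from the leading term. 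Changing variables back to $\tilde u_j$ (which turns $\frac{\sqrt t}{2}(x\cdot\nabla_y h)(\sqrt t x,\cdot)$ into $\frac12\,x\cdot\nabla h(x,\cdot)$ via $y=\sqrt t\,x$) yields
\[
\frac{d}{dt}(tD_j(t))=2t\!\int\!\bigg|(\tilde u_j)_t+\frac{\nabla\tilde u_j\cdot x}{2t}\bigg|^2G(x,t)dx-\!\!\int\!\!h|\tilde u_j|^2 G\,dx-\frac12\!\int\!\! (x\cdot\nabla h)|\tilde u_j|^2 G\,dx+t\!\!\int\!\! h_t|\tilde u_j|^2 G\,dx.
\]

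Finally, to match the stated formula, I would integrate by parts the term $-\frac12\int(x\cdot\nabla h)|\tilde u_j|^2 G\,dx$, using $\dive x=N$, the identity $\nabla G(x,t)=-\frac{x}{2t}G(x,t)$ from \eqref{eq:heatker}, and $x\cdot\nabla|\tilde u_j|^2=2\Re(\tilde u_j\,\overline{\nabla\tilde u_j\cdot x})$. This produces precisely
\[
\frac{N}{2}\!\int\!h|\tilde u_j|^2 G+\int h\,\Re(\tilde u_j\,\overline{\nabla\tilde u_j\cdot x})G-\frac{1}{4t}\!\int\!h|\tilde u_j|^2|x|^2 G,
\]
which, combined with the leftover $-\int h|\tilde u_j|^2G$, yields the coefficient $\frac{N-2}{2}$ in front of $h|\tilde u_j|^2 G$ as claimed. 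The boundary terms in the integration by parts vanish thanks to \eqref{eq:h}, Lemma \ref{l:ineqx2}, and the Gaussian weight $G(x,t)$.
\end{pf}
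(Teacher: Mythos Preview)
Your overall strategy---pass to the self-similar function $\tilde v_j$, test \eqref{eq:eqforv_i} with $(\tilde v_j)_t$ to gain the $L^2(\mathcal L)$ regularity, and then read off $\frac{d}{dt}(tD_j)$---is exactly the route the paper takes, and your bookkeeping of the cancellation of the $2t\,\Re\!\int h\,\tilde v_j\,\overline{(\tilde v_j)_t}$ terms is correct.

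There is, however, a genuine gap in your treatment of the $h$--term in part (ii). You differentiate $t\mapsto h(\sqrt t\,x,-(t+a_j))$ via the chain rule, producing the spatial gradient $\nabla_y h$, and then integrate by parts to move $\nabla$ off $h$. But the standing assumptions \eqref{eq:der}--\eqref{eq:h} impose no spatial regularity whatsoever on $h$: only $h,h_t\in L^r(I,L^{N/2})$ (or $L^p$ if $N=2$) together with the pointwise bound \eqref{eq:h}. Hence the intermediate expression $-\tfrac12\int (x\cdot\nabla h)\,|\tilde u_j|^2\,G$ is not defined in general, and your integration by parts is formal.

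The paper avoids this by arranging the computation so that the spatial derivative lands on $\tilde v_j$ rather than on $h$. Concretely, after testing with $(\tilde v_j)_t$ one is left with $-t\,\Re\!\int h(\sqrt t\,x,-(t+a_j))\,\tilde v_j\,\overline{(\tilde v_j)_t}\,G(x,1)\,dx$; using the identity $(\tilde v_j)_t(x,t)=\big[(\tilde u_j)_t+\tfrac{\nabla\tilde u_j\cdot y}{2t}\big](\sqrt t\,x,t)$ and the change $y=\sqrt t\,x$, this equals
\[
-t\,\Re\!\int h(y,-(t+a_j))\,\tilde u_j\,\overline{(\tilde u_j)_t}\,G(y,t)\,dy
-\tfrac12\,\Re\!\int h(y,-(t+a_j))\,\tilde u_j\,\overline{\nabla\tilde u_j\cdot y}\,G(y,t)\,dy.
\]
The second integral already has the form appearing in the stated formula; for the first, integrate by parts \emph{in $t$} only, using that $\partial_t\big[t\,h(y,-(t+a_j))\,G(y,t)\big]$ involves just $h$, $h_t$ and $G_t=\big(\tfrac{|y|^2}{4t^2}-\tfrac{N}{2t}\big)G$, never $\nabla h$. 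This yields directly the coefficients $\frac{N-2}{2}$ and $-\frac{|x|^2}{4t}$ without ever writing $\nabla h$. Your final formula is correct, but the derivation needs this reorganization (or, alternatively, a density argument in $h$) to be rigorous under the stated hypotheses.
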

\begin{proof} Testing equation (\ref{eq:eqforv_i})
  with $(\tilde v_j)_t$ (this formal testing
  procedure can be made rigorous by a suitable approximation) and
  using  Corollary \ref{c:pos_per}, we obtain that, for all
  $t\in(0,T_j)$,
\begin{align*}
  \int_t^{T_j}s&\bigg(\int_{\R^N}|(\tilde v_j)_t(x,s)|^2G(x,1)\,dx\bigg)\,ds\\
&\leq{\rm
    const\,}\bigg( \|\tilde u_j(\sqrt {T_j}\,\cdot,T_j)\|^2_{\mathcal H}+
  \int_{\R^N}|\tilde v_j(x,t)|^2G(x,1)\,dx\\
  & \quad+\int_t^{T_j}\!\!\bigg(\int_{\R^N}\!\!h(\sqrt s x, -(s+a_j))\bigg(
  \frac{|x|^2}8|\tilde v_j(x,s)|^2\\
&\hskip2.5cm- \frac{\Re(\tilde v_j(x,s)\overline{\nabla \tilde v_j(x,s)\cdot x}) }{2}
  -\frac{N-2}{4}|\tilde v_j(x,s)|^2\bigg)G(x,1)\,dx\bigg)\,ds\\
  &\quad-\frac{1}{2}\int_t^{T_j}\!\!s\bigg(\int_{\R^N}h_{s}(\sqrt s x,
  -(s+a_i))|\tilde v_j(x,s)|^2G(x,1)\,dx\bigg)\,ds\bigg).
\end{align*}
From \eqref{eq:der}--\eqref{eq:h} and Lemmas
\ref{l:ineqx2} and \ref{l:sob} we have that the integrals in the last two
terms of the previous formula are finite for every $t\in(0,T_j)$. Hence we
conclude that
$$
(\tilde v_j)_t\in L^2(\tau,T_j;{\mathcal L})\quad\text{for all
  }\tau\in (0,T_j).
$$
Testing \eqref{eq:eqforv_i} with $(\tilde v_j)_t$ also yields
\begin{align*}
  &\int_t^{T_i}s\bigg(\int_{\R^N}|(\tilde v_j)_t(x,s)|^2G(x,1)\,dx\bigg)\,ds\\
  &\quad+ \frac12 \int_{\R^N}\!\!\bigg(|\nabla_\A
  \tilde v_j(x,t)|^2-\frac{a(x/|x|)}{|x|^2}\,|\tilde v_j(x,t)|^2-th(\sqrt t x,
  -(t+a_j))|\tilde v_j(x,t)|^2\bigg)
  G(x,1)\,dx\\
  &=\frac12 \int_{\R^N}\!\!\bigg(|\nabla_\A
  v_{0,j}(x)|^2-\frac{a(x/|x|)}{|x|^2}\,|v_{0,j}(x)|^2- T_j h(\sqrt {
    T_j} x, -(T_j+a_j))|v_{0,j}(x)|^2\bigg) G(x,1)\,dx
  \\
  &\quad+\int_t^{T_j}\!\!\bigg(\int_{\R^N}h(\sqrt s x, -(s+a_ij))\bigg(
  \frac{|x|^2}8|\tilde v_j(x,s)|^2\\
&\hskip2.5cm- \frac{\Re(\tilde v_j(x,s)\overline{\nabla \tilde v_j(x,s)\cdot x}) }{2}
  -\frac{N-2}{4}|\tilde v_j(x,s)|^2\bigg)G(x,1)\,dx\bigg)\,ds\\
  &\quad-\frac{1}{2}\int_t^{T_j}s\bigg(\int_{\R^N}h_{s}(\sqrt s x,
  -(s+a_j))|\tilde v_j(x,s)|^2G(x,1)\,dx\bigg)\,ds,
\end{align*}
for all $t\in (0, T_j)$, where $v_{0,j}(x):=\tilde u_j(\sqrt { T_j} x, T_j)
\in{\mathcal H}$.  Therefore the function
$$
t\mapsto 
\int_{\R^N}\!\!\bigg(|\nabla_\A
  \tilde v_j(x,t)|^2-\frac{a(x/|x|)}{|x|^2}\,|\tilde v_j(x,t)|^2-th(\sqrt t x,
  -(t+a_j))|\tilde v_j(x,t)|^2\bigg)
  G(x,1)\,dx
$$
is absolutely continuous  in $(\tau,T_j)$ for all $\tau\in(0,T_j)$ and
\begin{align*}
  \frac{d}{dt}&
\int_{\R^N}\!\!\bigg(|\nabla_\A
  \tilde v_j(x,t)|^2-\frac{a(x/|x|)}{|x|^2}\,|\tilde v_j(x,t)|^2-th(\sqrt t x,
  -(t+a_j))|\tilde v_j(x,t)|^2\bigg)
  G(x,1)\,dx\\
  &=2t \int_{\R^N}|(\tilde v_j)_t(x,t)|^2G(x,1)\,dx\\
  &\quad-\int_{\R^N}h(\sqrt t x, -(t+a_j))\bigg( \frac{|x|^2}4|\tilde v_j(x,t)|^2\\
&\hskip2.5cm-
\Re(\tilde v_j(x,s)\overline{\nabla \tilde v_j(x,s)\cdot x}) 
  -\frac{N-2}{2}|\tilde v_j(x,t)|^2\bigg)G(x,1)\,dx\\
  &\quad+t\int_{\R^N}h_{t}(\sqrt t x, -(t+a_j))|\tilde v_j(x,t)|^2G(x,1)\,dx.
\end{align*}
The change of variables $\tilde u_j(x,t)=\tilde v_j(x/\sqrt t,t)$
gives the conclusion.
\end{proof}

For all $j=1,\dots,k$, the \emph{Almgren type
  frequency function} associated to $\tilde u_j$ is defined as 
\begin{equation*}
N_j:(0,2\alpha)\to\R\cup\{-\infty,+\infty\},
\quad N_j(t):=\frac{tD_j(t)}{H_j(t)}.
\end{equation*}
The analysis below will show that each $N_j$
actually assumes
 finite values all over $(0,2\alpha)$
and  its derivative 
is an integrable
perturbation of a nonnegative function wherever  $N_j$ assumes
finite values.

\begin{Lemma}\label{l:Nprime}
  Let $k\in \{1,\dots,k\}$. If there exist $\beta_j,T_j\in
  (0,2\alpha)$ such that
\begin{equation}\label{eq:10j}
\beta_j<T_j, \quad H_j(t)>0 \text{ for all $t\in (\beta_j,T_j)$},\quad
\text{and}\quad \tilde u_j(\cdot,T_j)\in {\mathcal H}_{T_j},
\end{equation}
then $N_j\in W^{1,1}_{\rm loc}(\beta_j,T_j)$ and
\begin{align*}
N'_j(t)={\nu}_{1j}(t)+{\nu}_{2j}(t)
\end{align*}
in a distributional sense and a.e. in $(\beta_j,T_j)$ where
\begin{align*}
  {\nu}_{1j}(t)&=\frac{2t}{H_{j}^2(t)}{{
      \bigg[\bigg(\!\int_{\R^N}\!\!\bigg|(\tilde
                 u_j)_t(x,t)+\frac{\nabla \tilde u_j(x,t)\cdot
        x}{2t}\bigg|^2\!G(x,t)\,dx\!\bigg)
      \bigg(\!\int_{\R^N}\!\!|\tilde u_{j}(x,t)|^2\, G(x,t)\,dx\!\bigg)}}\\
  &\hskip3cm{{-\bigg(\int_{\R^N}\Big((\tilde u_j)_t(x,t)+\frac{\nabla
        \tilde u_{j}(x,t)\cdot x}{2t}\Big)\overline{\tilde u_{j}(x,t)}G(x,t)\,dx
      \bigg)^{\!\!2}\,\bigg]}}
\end{align*}
and
\begin{align*} 
{\nu}_{2j}(t)&=\frac1{H_{j}(t)}
      \int_{\R^N}h(x,-(t+a_j))\bigg(\frac{N-2}{2}|\tilde u_{j}(x,t)|^2\\
&\hskip3cm+
  \Re\big(\tilde u_j(x,t) \overline{\nabla
       \tilde  u_{j}(x,t)\cdot x}\big)-\frac{|x|^2}{4t}|\tilde u_{j}(x,t)|^2\bigg)
      G(x,t)\,dx\\
  &\quad{+\dfrac
      t{H_{j}(t)}\bigg(\int_{\R^N}h_{t}(x,-(t+a_j))|\tilde u_{j}(x,t)|^2G(x,t)\,dx\bigg)}.
\end{align*}
\end{Lemma}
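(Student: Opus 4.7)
\begin{pf}[Proposal]
The strategy is to apply the quotient rule to $N_j(t)=tD_j(t)/H_j(t)$, then group terms so that the ``principal'' part displays a Cauchy--Schwarz structure and the remainder collects the contributions of the perturbation $h$.

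\textbf{Step 1: Quotient rule is admissible.} By Lemma \ref{l:Hcreas} the function $H_j$ is continuous on $(0,2\alpha)$, and by hypothesis \eqref{eq:10j} it is strictly positive on $(\beta_j,T_j)$, hence bounded away from $0$ on every compact subinterval. Equation \eqref{eq:10i} gives $H_j\in W^{1,1}_{\rm loc}(0,2\alpha)$ with $H_j'=2D_j$. Lemma \ref{l:Dprime}(ii) yields $tD_j\in W^{1,1}_{\rm loc}(0,T_j)$. Therefore $N_j\in W^{1,1}_{\rm loc}(\beta_j,T_j)$ and
\[
N_j'(t)=\frac{(tD_j)'(t)}{H_j(t)}-\frac{2tD_j(t)^2}{H_j(t)^2}\quad\text{a.e.\ in }(\beta_j,T_j).
\]

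\textbf{Step 2: Integral representation of $D_j$.} By Lemma \ref{l:Dprime}(i) the vector $(\tilde u_j)_t+\frac{\nabla \tilde u_j\cdot x}{2t}$ lies in $\mathcal L_t$ for a.e.\ $t\in(\beta_j,T_j)$, so the duality pairing with $\tilde u_j(\cdot,t)\in\mathcal H_t$ is given by the $\mathcal L_t$-integral. Testing Definition \ref{def:solution} with $\phi=\tilde u_j(\cdot,t)$ (using the transversality identity $\nabla_\A \tilde u_j\cdot x=\nabla \tilde u_j\cdot x$ of Remark \ref{rem:uv}) shows that
\[
D_j(t)=\Re\!\int_{\R^N}\!\!\bigg((\tilde u_j)_t(x,t)+\frac{\nabla \tilde u_j(x,t)\cdot x}{2t}\bigg)\overline{\tilde u_j(x,t)}\,G(x,t)\,dx.
\]
Squaring this identity, the Cauchy--Schwarz inequality in $\mathcal L_t$ gives
\[
D_j(t)^2\leq\bigg(\!\int_{\R^N}\!\bigg|(\tilde u_j)_t+\tfrac{\nabla \tilde u_j\cdot x}{2t}\bigg|^2G\,dx\bigg)\bigg(\!\int_{\R^N}\!|\tilde u_j|^2\,G\,dx\bigg)=\bigg(\!\int_{\R^N}\!\bigg|(\tilde u_j)_t+\tfrac{\nabla \tilde u_j\cdot x}{2t}\bigg|^2G\,dx\bigg)H_j(t),
\]
so the combination appearing as $\nu_{1j}$ is automatically nonnegative.

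\textbf{Step 3: Assembly.} Substitute the explicit formula from Lemma \ref{l:Dprime}(ii) for $(tD_j)'$ into the quotient rule expression and replace $D_j^2/H_j^2$ with the squared real-integral expression from Step 2. The ``kinetic'' part
\[
\frac{2t}{H_j(t)}\!\int_{\R^N}\!\!\bigg|(\tilde u_j)_t+\frac{\nabla \tilde u_j\cdot x}{2t}\bigg|^2G\,dx
\]
coming from $(tD_j)'$ combines with $-2tD_j^2/H_j^2$ to produce exactly $\nu_{1j}$, while the two terms in Lemma \ref{l:Dprime}(ii) that involve $h$ and $h_t$, divided by $H_j$, are precisely $\nu_{2j}$.

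\textbf{Step 4: Local $L^1$ bounds.} Finally one checks that $\nu_{1j},\nu_{2j}\in L^1_{\rm loc}(\beta_j,T_j)$. For $\nu_{1j}$ this follows from Lemma \ref{l:Dprime}(i) and the positivity/continuity of $H_j$; for $\nu_{2j}$ one uses \eqref{eq:der}--\eqref{eq:h}, together with the weighted Hardy inequality of Lemma \ref{Hardy_aniso}, Corollary \ref{cor:ineq} (to control the $|x|^2|\tilde u_j|^2$-term), and the weighted Sobolev embedding of Lemma \ref{l:sob} (to handle $\int h_t|\tilde u_j|^2G\,dx$ via H\"older against $L^{N/2}$ or $L^p$). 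These control $\nu_{2j}(t)$ by $L^1_{\rm loc}(\beta_j,T_j)$ functions once $H_j$ is bounded below.

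\textbf{Main obstacle.} The delicate point is Step 2: one must justify that the abstract duality pairing appearing in Definition \ref{def:solution} is represented by an honest $\mathcal L_t$ integral, so that $D_j$ can be interpreted as the real part of an $\mathcal L_t$ inner product and the Cauchy--Schwarz inequality produces the nonnegative $\nu_{1j}$. This is exactly what Lemma \ref{l:Dprime}(i) supplies. Once this is in place the rest is an algebraic rearrangement of the derivative formulas already established.
\end{pf}
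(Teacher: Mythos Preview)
Your proof is correct and follows exactly the paper's approach: apply the quotient rule using $H_j'=2D_j$ from \eqref{eq:10i} and the formula for $(tD_j)'$ from Lemma~\ref{l:Dprime}, then regroup so that the kinetic term and $-2tD_j^2/H_j^2$ combine into $\nu_{1j}$ while the $h$- and $h_t$-terms give $\nu_{2j}$. Your Steps~2 and~4 spell out details the paper leaves implicit (in particular, that the duality pairing in \eqref{eq:10i} is an honest $\mathcal L_t$-integral once Lemma~\ref{l:Dprime}(i) is known, and that this integral is actually real by the weak equation, so $(\int\cdots)^2=D_j^2$); the Cauchy--Schwarz nonnegativity you note is not needed for this lemma but anticipates \eqref{eq:17} in the proof of Lemma~\ref{l:Nabove}.
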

\begin{proof}
  From \eqref{eq:10i}  and Lemma \ref{l:Dprime}, it follows that
  $N_{j}\in W^{1,1}_{\rm loc}(\beta_j,T_j)$. From \eqref{eq:10i} it follows that
$$
N'_{j}(t)=\frac{(tD_{j}(t))'H_{j}(t)-tD_{j}(t)H'_{j}(t)}{H_{j}^2(t)}=
\frac{(tD_{j}(t))'H_{j}(t)-2tD_{j}^2(t)}{H_{j}^2(t)},
$$
and hence, in view of (\ref{eq:Hi(t)}),
\eqref{eq:10i}, and Lemma \ref{l:Dprime}, we obtain the conclusion.
\end{proof}

\begin{Lemma}\label{l:Nabove}
There exists $C_3>0$ such that, if $j\in \{1,\dots,k\}$ and $\beta_j,T_j\in
  (0,2\alpha)$ satisfy \eqref{eq:10j}, then,
 for every $t\in(\beta_j,T_j)$,
\begin{equation*}
  N_j(t)\leq
-\frac{N-2}{4}+C_{3}\bigg(N_j(T_j)+\frac{N-2}4\bigg).
\end{equation*}
\end{Lemma}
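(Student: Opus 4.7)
My plan is to combine the decomposition $N'_j=\nu_{1j}+\nu_{2j}$ provided by Lemma \ref{l:Nprime} with a backward Gronwall argument. Two preliminary observations drive the proof. First, Cauchy--Schwarz in $L^2(\R^N,G(\cdot,t)\,dx)$ applied to the pair $(\tilde u_j)_t+\frac{\nabla\tilde u_j\cdot x}{2t}$ and $\tilde u_j$ yields $\nu_{1j}(t)\geq 0$, so $N'_j\geq -|\nu_{2j}|$ a.e. Second, since $2\alpha<\overline T$, Corollary \ref{c:pos_per} implies
\[
\Phi_j(t):=N_j(t)+\tfrac{N-2}{4}\geq C_1>0\quad\text{whenever }H_j(t)>0,
\]
which will let me turn any additive error in the forthcoming estimate of $|\nu_{2j}|$ into a multiplicative factor of $\Phi_j$.

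The core of the argument is to show that
\[
|\nu_{2j}(t)|\leq f(t)\,\Phi_j(t)\qquad\text{a.e. on }(\beta_j,T_j),\quad f\in L^1(0,2\alpha).
\]
For the purely scalar contributions $\frac{1}{H_j}\int h|\tilde u_j|^2G$ and $\frac{t}{H_j}\int h_t|\tilde u_j|^2G$, the computation in \eqref{eq:11} combined with Lemma \ref{Hardy_aniso} and the identity $\int(|\nabla_\A\tilde u_j|^2-\frac{a}{|x|^2}|\tilde u_j|^2)G=D_j+\int h|\tilde u_j|^2G$ gives $\int\frac{|\tilde u_j|^2}{|x|^2}G\leq\frac{C}{t}H_j\,\Phi_j$ (the $\int h|\tilde u_j|^2G$ feedback being absorbed for small $t$), whence these terms are bounded by $C(1+t^{\varepsilon/2-1})\Phi_j$. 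For the $\frac{|x|^2}{4t}$-term I use $|h||x|^2\leq C_h(|x|^2+|x|^\varepsilon)$, split integration at $|x|=\sqrt t$, and apply Corollary \ref{cor:ineq} (after controlling $|\nabla u|^2$ by $2|\nabla_\A u|^2+2\|\A\|_\infty^2|x|^{-2}|u|^2$) to obtain $\int|x|^2|\tilde u_j|^2G\leq CtH_j\Phi_j$ and an analogous bound for $\int|x|^\varepsilon|\tilde u_j|^2G$; division by $tH_j$ yields another coefficient of order $t^{\varepsilon/2-1}$. The cross term $\frac{1}{H_j}\int h\,\Re(\tilde u_j\overline{\nabla\tilde u_j\cdot x})G$ is controlled by Cauchy--Schwarz as $\big(\int|h||\tilde u_j|^2G\big)^{1/2}\big(\int|h||\nabla\tilde u_j\cdot x|^2G\big)^{1/2}$ and then by the geometric mean of the two previous estimates. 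Collecting everything gives $f(t)=O(1+t^{\varepsilon/2-1})$, which lies in $L^1(0,2\alpha)$ because $\varepsilon>0$.

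Once the pointwise inequality for $|\nu_{2j}|$ is established, integrating $N'_j\geq -|\nu_{2j}|$ from $t$ to $T_j$ gives
\[
\Phi_j(t)\leq\Phi_j(T_j)+\int_t^{T_j}f(s)\,\Phi_j(s)\,ds,
\]
and backward Gronwall produces $\Phi_j(t)\leq C_3\,\Phi_j(T_j)$ with $C_3=\exp\!\big(\int_0^{2\alpha}f(s)\,ds\big)$; subtracting $\frac{N-2}{4}$ from both sides is the stated inequality. The main obstacle is the bookkeeping behind the estimate of $|\nu_{2j}|$: every singular factor produced by $|h|\sim|x|^{-2+\varepsilon}$ near the origin must be compensated by a power $t^{\varepsilon/2}$ coming from the Gaussian rescaling, so as to yield an integrable singularity $t^{\varepsilon/2-1}$ and close the Gronwall loop; the first-order cross term is the subtlest because it requires a balanced Cauchy--Schwarz split that only works after the Hardy and second-moment reductions have already been carried out.
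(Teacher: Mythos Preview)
Your overall strategy coincides with the paper's: show $\nu_{1j}\geq 0$ by Cauchy--Schwarz, bound $|\nu_{2j}|$ by an integrable multiple of $\Phi_j = N_j+\frac{N-2}{4}$, and close with a backward Gronwall. There is, however, a genuine gap in your treatment of the term $\frac{t}{H_j}\int h_t|\tilde u_j|^2 G$. You group it with the $h$-term and invoke the computation in \eqref{eq:11}, but \eqref{eq:11} rests on the pointwise bound \eqref{eq:h}, which is assumed for $h$ only, \emph{not} for $h_t$. The hypotheses on $h_t$ are the integrability conditions in \eqref{eq:der}, namely $h_t\in L^r(I,L^{N/2})$ for $N\geq 3$ (and $L^p$ for $N=2$). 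The paper therefore handles this term by H\"older's inequality and the weighted Sobolev embedding of Lemma~\ref{l:sob}, obtaining
\[
\bigg|\int_{\R^N}h_t|\tilde u_j|^2 G\,dx\bigg|\leq C\, t^{-1}\|\tilde u_j(\cdot,t)\|_{\mathcal H_t}^2\|h_t(\cdot,-(t+a_j))\|_{L^{N/2}(\R^N)},
\]
which is then controlled by $\Phi_j$ via Corollary~\ref{c:pos_per}. Consequently your $f$ cannot be merely $O(1+t^{\varepsilon/2-1})$: it must also contain $\|h_t(\cdot,-(t+a_j))\|_{L^{N/2}(\R^N)}$ (or the $L^p$ analogue for $N=2$), and it is precisely \eqref{eq:der} that puts this extra term in $L^1(0,2\alpha)$.

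A smaller point: your Cauchy--Schwarz split for the cross term produces the factor $\big(\int|h|\,|\nabla\tilde u_j\cdot x|^2 G\big)^{1/2}$, which after \eqref{eq:h} contains $\int|x|^2|\nabla\tilde u_j|^2 G$. This second moment of the \emph{gradient} is not provided by Corollary~\ref{cor:ineq} (which only bounds $\int|x|^2|\tilde u_j|^2 G$). The paper avoids this by first bounding $|h|\,|x|$ pointwise and only then applying Young's inequality, so that just $\int|\nabla\tilde u_j|^2 G$, $\int\frac{|x|^2}{t^2}|\tilde u_j|^2 G$ and $\int\frac{|\tilde u_j|^2}{|x|^2}G$ appear; see \eqref{eq:15}.
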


\begin{proof}
Let us first claim that, for all $j=1,\dots,k$, the  term ${\nu_{2j}}$ of Lemma \ref{l:Nprime}  can be estimated as follows:
\begin{equation}\label{eq:est_nu2}
\big|{\nu_{2j}}(t)\big|\leq 
 \begin{cases}
    C_4\big(N_j(t)+{\textstyle{\frac{N-2}{4}}}\big)
\Big(t^{-1+\e/2}+\|h_t(\cdot,-(t+a_j))\|_{L^{{N}/{2}}(\R^N)}\Big),&\text{if }N\geq3,\\
    C_4\big(N_j(t)+{\textstyle{\frac{N-2}{4}}}\big)
\Big(t^{-1+\e/2}+\|h_t(\cdot,-(t+a_j))\|_{L^{p}(\R^N)}\Big),&\text{if }N=2,
\end{cases}
\end{equation}
for a.e. $t\in(\beta_j,T_j)$, with some $C_4>0$ independent of $t$ and
$j$. Indeed, from (\ref{eq:h}) it follows that,  for a.e. $t\in(\beta_j,T_j)$,
\begin{align}\label{eq:15}
  \bigg|\int_{\R^N}&h(x,-(t+a_j))
  \Re\big(\tilde u_j(x,t) \overline{\nabla
       \tilde  u_{j}(x,t)\cdot x}\big)
      G(x,t)\,dx\bigg|\\
  &\notag\leq
  C_h\int_{\R^N}(1+|x|^{-2+\e})|\nabla \tilde u_{j}(x,t)||x||\tilde u_{j}(x,t) |G(x,t)\,dx\\
   &\notag\leq C_ht \int_{\R^N}|\nabla
  \tilde u_{j}(x,t)|\frac{|x|}{t}|u_{i}(x,t)| G(x,t)\,dx \\
&\notag\hskip1cm +C_ht^{\e/2}
  \int_{\{|x|\leq \sqrt t\}}|\nabla \tilde u_{j}(x,t)|\frac{|\tilde u_{j}(x,t)|}{|x|} G(x,t)\,dx\\
   &\notag\hskip1cm +C_ht^{\e/2}\int_{\{|x|\geq \sqrt t\}}|\nabla
  \tilde u_{j}(x,t)|\frac{|x|}{t}|\tilde u_{j}(x,t)| G(x,t)\,dx
  \\
   &\notag \leq \frac12C_h(t+t^{\e/2})\int_{\R^N}|\nabla
  \tilde u_{j}(x,t)|^2G(x,t)\,dx\\
   &\notag\hskip1cm+
  \frac12C_h(t+t^{\e/2})\int_{\R^N}\frac{|x|^2}{t^2}|\tilde u_{j}(x,t)|^2 G(x,t)\,dx\\
   &\notag\hskip1cm + \frac12C_ht^{\e/2}\int_{\R^N}|\nabla
  \tilde u_{j}(x,t)|^2G(x,t)\,dx
  + \frac12C_ht^{\e/2}\int_{\R^N}\frac{|\tilde u_{j}(x,t)|^2}{|x|^2}G(x,t)\,dx\\
  &\notag\leq \frac12C_ht^{\e/2}(2+{\overline T}^{1-\e/2})
  \int_{\R^N}|\nabla \tilde u_{j}(x,t)|^2G(x,t)\,dx\\
   &\notag\hskip1cm +
  \frac12C_ht^{\e/2}(1+{\overline T}^{1-\e/2})\int_{\R^N}
  \frac{|x|^2}{t^2}|\tilde u_{j}(x,t)|^2 G(x,t)\,dx\\
   &\notag\hskip1cm  +
  \frac12C_ht^{\e/2}\int_{\R^N}\frac{|\tilde u_{j}(x,t)|^2}{|x|^2}G(x,t)\,dx ,
\end{align}
and
\begin{align}\label{eq:14}
  \int_{\R^N}&|h(x,-(t+a_j))||x|^2|\tilde u_{j}(x,t)|^2 G(x,t)\,dx\\
  &\notag\leq C_h
  \int_{\R^N}|x|^2|\tilde u_{j}(x,t)|^2G(x,t)\,dx+C_h
  \int_{\R^N}|x|^{-2+\e}|x|^2|\tilde u_{j}(x,t)|^2 G(x,t)\,dx\\
  &\notag\leq C_h \int_{\R^N}|x|^2|\tilde u_{j}(x,t)|^2 G(x,t)\,dx+C_h t^{\e/2}
  \int_{\{|x|\leq \sqrt t\}}|\tilde u_{j}(x,t)|^2 G(x,t)\,dx\\
  &\notag \quad+C_ht^{-1+\e/2}
  \int_{\{|x|\geq \sqrt t \}}|x|^2|\tilde u_{j}(x,t)|^2 G(x,t)\,dx\\
  &\notag\leq C_ht^{-1+\e/2}(1+{\overline T}^{1-\e/2})
  \int_{\R^N}|x|^2|\tilde u_{j}(x,t)|^2 G(x,t)\,dx +C_h t^{\e/2} \int_{\R^N}|\tilde u_{j}(x,t)|^2 G(x,t)\,dx.
\end{align}
Moreover, by H\"older's inequality and
Corollary \ref{l:sob}, we have that, for a.e. $t\in (\beta_j,T_j)$, if $N\geq3$,
\begin{align}\label{eq:hip}
&\bigg|\int_{\R^N}h_{t}(x,-(t+a_j))|\tilde
  u_{j}(x,t)|^2G(x,t)\,dx\bigg|\leq C_{2^*}t^{-1}
\|\tilde u_j(\cdot,t)\|^2_{{\mathcal H}_t}\|h_t(\cdot,-(t+a_j))\|_{L^{{N}/{2}}(\R^N)}
\end{align}
and, if $N=2$,
\begin{align}\label{eq:hip-2}
&\bigg|\int_{\R^N}h_{t}(x,-(t+a_j))|\tilde
  u_{j}(x,t)|^2G(x,t)\,dx\bigg|\leq C_{\frac{2p}{p-1}}t^{-1/p}
\|\tilde u_j(\cdot,t)\|^2_{{\mathcal H}_t}\|h_t(\cdot,-(t+a_j))\|_{L^{p}(\R^N)}.
\end{align}
 Collecting \eqref{eq:11}, \eqref{eq:15},
\eqref{eq:14} and \eqref{eq:hip}--\eqref{eq:hip-2}, we obtain that
\begin{align}\label{eq:16}
\big|{\nu_{2j}}(t)\big|\leq &\,\frac{{\rm const}\, t^{\e/2}}{H_{j}(t)}
\bigg(\frac1t\int_{\R^N}|\tilde u_{j}(x,t)|^2 G(x,t)\,dx+
\int_{\R^N}\frac{|\tilde u_{j}(x,t)|^2}{|x|^2}G(x,t)\,dx\\
&\notag+\int_{\R^N}|\nabla \tilde u_{j}(x,t)|^2G(x,t)\,dx+
\frac1{t^2}\int_{\R^N}|x|^2|\tilde u_{j}(x,t)|^2
  G(x,t)\,dx\bigg)+\mathcal R_j(t)
\end{align}
where 
\begin{equation*}
\mathcal R_j(t)=
\begin{cases}
\frac{C_{2^*}}{H_{j}(t)}
\|\tilde u_j\|^2_{{\mathcal
    H}_t}\|h_t(\cdot,-(t+a_j))\|_{L^{{N}/{2}}(\R^N)},&\text{if
}N\geq3,\\
C_{\frac{2p}{p-1}}\frac{t^{1-\frac 1p}}{H_{j}(t)}
\|\tilde u_j\|^2_{{\mathcal
    H}_t}\|h_t(\cdot,-(t+a_j))\|_{L^{p}(\R^N)},&\text{if
}N=2.
\end{cases}
\end{equation*}
Then estimate \eqref{eq:est_nu2}  follows from inequality (\ref{eq:16}), Corollary
\ref{c:pos_per}, and Corollary \ref{cor:ineq}.

By Schwarz's inequality,
\begin{equation}\label{eq:17}
{\nu}_{1j}\geq 0 \quad\text{a.e. in }(\beta_j,T_j).
\end{equation}
From Lemma \ref{l:Nprime}, (\ref{eq:17}), and \eqref{eq:est_nu2} it
follows that
\begin{equation*}
\frac{d}{dt}N_{j}(t)\geq
 \begin{cases}
-    C_4\big(N_j(t)+{\textstyle{\frac{N-2}{4}}}\big)
\Big(t^{-1+\e/2}+\|h_t(\cdot,-(t+a_j))\|_{L^{{N}/{2}}(\R^N)}\Big),&\text{if }N\geq3,\\
 -   C_4\big(N_j(t)+{\textstyle{\frac{N-2}{4}}}\big)
\Big(t^{-1+\e/2}+\|h_t(\cdot,-(t+a_j))\|_{L^{p}(\R^N)}\Big),&\text{if }N=2,
\end{cases}
\end{equation*}
for a.e. $t\in(\beta_j,T_j)$.  After integration, it follows that
\begin{equation*}
  N_{j}(t)\\
  \leq
\begin{cases}
-\frac{N-2}{4}
+\Big(
N_{j}(T_{j})+\frac{N-2}{4}\Big)\exp\Big({\frac{2C_4}{\e}T_{j}^{\e/2}+C_4
\|h_t\|_{L^{1}((-T,0),L^{{N}/{2}}(\R^N))}}\Big),
  &\text{if }N\geq 3,\\
-\frac{N-2}{4}
+\Big(
N_{j}(T_{j})+\frac{N-2}{4}\Big)\exp\Big({\frac{2C_4}{\e}T_{j}^{\e/2}+C_4
\|h_t\|_{L^{1}((-T,0),L^{p}(\R^N))}}\Big),
  &\text{if }N=2,
\end{cases}
\end{equation*}
for any $t\in (\beta_j,T_j)$, thus yielding the conclusion.
\end{proof}

As a consequence of the above monotonicity argument, we can prove the
following non-vanishing properties of the the functions $H_j$.

\begin{Proposition}\label{p:t_0=0}
\begin{enumerate}[\rm (i)]
\item Let $j\in\{1,\dots,k\}$. If $H_j\not\equiv 0$, then
$H_j(t)>0$ for all $t\in(0,2\alpha)$.
\item Let $j\in\{1,\dots,k\}$. Then
$H_j(t)\equiv 0$ in $(0,2\alpha)$ if and only if $H_{j+1}(t)\equiv 0$
in $(0,2\alpha)$.
\item     Let $u\not\equiv0$ is  a weak solution to
  \eqref{prob} in $\R^N\times (-T,0)$ with  $h$ satisfying
  \eqref{eq:der}--\eqref{eq:h} in  $\R^N\times(-T,0)$. Let $\tilde
  u(x,t)=u(x,-t)$ and let $H_j$ be defined as in \eqref{eq:Hi(t)} for $j=1,2,\dots,k$. Then
$H_j(t)>0$ 
for all $t\in(0,2\alpha)$ and $j=1,\dots,k$. In particular,
\begin{equation}\label{eq:88}
\int_{\R^N}| u(x,-t)|^2G(x,-t)\,dx>0\quad\text{for all }t\in(-T,0).
\end{equation}
\end{enumerate}
\end{Proposition}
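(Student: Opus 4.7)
The plan is to prove (i) by combining the Almgren monotonicity machinery of this section with a Gronwall argument, then deduce (ii) from the overlap of consecutive translated intervals, and finally obtain (iii) by iterating (ii) from a single nonvanishing $H_{j_0}$.

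For (i), Lemma \ref{l:Hcreas} (monotonicity of $t\mapsto t^{-2C_1+(N-2)/2}H_j(t)$) makes the set $\{t:H_j(t)>0\}$ upward-closed in $(0,2\alpha)$, while Lemma \ref{l:Hpos} makes the set $\{t:H_j(t)=0\}$ downward-closed. Hence, whenever $H_j\not\equiv 0$, there is a single threshold $t^*\in[0,2\alpha)$ such that $H_j\equiv 0$ on $(0,t^*]$ and $H_j>0$ on $(t^*,2\alpha)$, so the task reduces to excluding $t^*>0$. Arguing by contradiction, I would fix $T_j\in(t^*,2\alpha)$ with $\tilde u_j(\cdot,T_j)\in\mathcal H_{T_j}$ (available for a.e.\ such $T_j$ thanks to \eqref{eq:defsol1} and Fubini) and apply Lemma \ref{l:Nabove} on each interval $(\beta_j,T_j)$ with $\beta_j\in(t^*,T_j)$. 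This yields a bound $N_j(t)\leq M$ on $(\beta_j,T_j)$, where $M$ depends only on $T_j$ (through $N_j(T_j)$, $T_j^{\e/2}$, and $\|h_t\|_{L^1}$) and \emph{not} on $\beta_j$. Combined with $H_j'=2D_j=2N_jH_j/t$ from \eqref{eq:10i}, this produces the differential inequality $(\ln H_j)'(t)\leq 2|M|/t$ a.e.\ on $(\beta_j,T_j)$, whence Gronwall gives $H_j(s)\geq H_j(T_j)(s/T_j)^{2|M|}$ for $s\in(\beta_j,T_j)$. Letting first $s\to\beta_j^+$ and then $\beta_j\to(t^*)^+$ with $T_j$ fixed, and invoking continuity of $H_j$ on $(0,2\alpha)$ from Remark \ref{rem:uv}, I would obtain $0=H_j(t^*)\geq H_j(T_j)(t^*/T_j)^{2|M|}>0$, the desired contradiction.

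Part (ii) will rest on the pointwise identity $\tilde u_{j+1}(x,t)=\tilde u_j(x,t+\alpha)$ for $t\in(0,\alpha)$, immediate from $a_{j+1}=a_j+\alpha$. If $H_j\equiv 0$, then $\tilde u_j\equiv 0$ a.e., so $\tilde u_{j+1}(\cdot,t)=0$ a.e.\ in $\R^N$ for $t\in(0,\alpha)$, giving $H_{j+1}(t)=0$ on a nonempty subinterval of $(0,2\alpha)$; the contrapositive of (i) then forces $H_{j+1}\equiv 0$. The reverse implication is symmetric, using $\tilde u_j(x,t)=\tilde u_{j+1}(x,t-\alpha)$ for $t\in(\alpha,2\alpha)$. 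For (iii), since $\bigcup_{j=1}^k(a_j,b_j)=(0,T)$, the hypothesis $u\not\equiv 0$ forces $H_{j_0}\not\equiv 0$ for at least one $j_0$; part (i) upgrades this to $H_{j_0}>0$ throughout $(0,2\alpha)$, and iterating (ii) forward and backward in $j$ spreads positivity to every $H_j$. The final inequality \eqref{eq:88} follows: for $t\in(-T,0)$, writing $-t=\tau+a_j$ with $\tau\in(0,2\alpha)$, positivity of $H_j(\tau)$ means $\tilde u(\cdot,-t)=\tilde u_j(\cdot,\tau)$ does not vanish a.e.\ in $\R^N$, so the integral of $|u(x,t)|^2$ against the strictly positive kernel $G(x,-t)$ is strictly positive.

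The main obstacle I anticipate is the uniformity of the constant $M$ in Lemma \ref{l:Nabove} as the left endpoint $\beta_j$ approaches $t^*$: without such uniformity one could not take the limit inside the Gronwall estimate without the exponent blowing up. Fortunately, inspection of the proof of Lemma \ref{l:Nabove} confirms that $M$ depends only on $T_j$, $N_j(T_j)$, $\e$, and an $L^1$-in-time norm of $h_t$, all independent of $\beta_j$, so the limit $\beta_j\to t^*$ is genuinely harmless.
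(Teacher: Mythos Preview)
Your argument is correct and follows essentially the same route as the paper, which omits the details and refers to \cite[Lemmas 4.9 and 4.10, Corollary 7]{FP}: the contradiction in (i) via the uniform upper bound on $N_j$ from Lemma \ref{l:Nabove} combined with $H_j'=2N_jH_j/t$ to produce a positive lower bound for $H_j$ at the putative vanishing threshold $t^*>0$, followed by the overlap argument for (ii) and iteration for (iii). Your observation that the constant $M$ in Lemma \ref{l:Nabove} is independent of $\beta_j$ is exactly the point that makes the limit $\beta_j\to (t^*)^+$ legitimate.
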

\begin{pf}
Once Lemmas \ref{l:Hpos} and \ref{l:Nabove} are established, the proof
can be done arguing as in \cite[Lemmas 4.9 and 4.10, Corollary
7]{FP}. Hence we omit it.
\end{pf}

\begin{pfn}{Proposition \ref{p:uniq_cont}}
Up a translation in time, it is not restrictive to assume that
$I=(-T,0)$ for some $T>0$. Then the conclusion follows from
Proposition \ref{p:t_0=0} (iii).
\end{pfn}

Henceforward, we assume $u\not\equiv 0$ in $\R^N\times(-T,0)$ (so that 
$\tilde u\not\equiv 0$ in $\R^N\times(0,T)$)
and we denote, for all
$t\in(0,2\alpha)$,
\begin{align*}
&H(t)=H_{1}(t)=\int_{\R^N}|\tilde u(x,t)|^2\, G(x,t)\,dx,\\
& D(t)=D_{1}(t)=\!\!\int_{\R^N}\!\!\bigg(|\nabla_\A \tilde u(x,t)|^2-
  \dfrac{a\big({x}/{|x|}\big)}{|x|^2}|\tilde u(x,t)|^2- h(x,-t)|\tilde u(x,t)|^2\bigg)\, G(x,t)\,dx.
\end{align*}
Proposition \ref{p:t_0=0} ensures that, if $\tilde u\not\equiv 0$ in
$\R^N\times (0,T)$, then $H(t)>0$ for all $t\in(0,2\alpha)$. Hence the
\emph{Almgren type frequency function}
\begin{equation*} 
  {\mathcal N}(t)=N_1(t)=\frac{tD(t)}{H(t)}
\end{equation*}
is well defined over all $(0,2\alpha)$. Moreover, by Lemma \ref{l:Nprime},
${\mathcal N}\in W^{1,1}_{\rm loc}(0,2\alpha)$ and
${\mathcal N}'={\nu}_1+{\nu}_2$  a.e. in $(0,2\alpha)$, 
where
${\nu}_1={\nu}_{11}$ and ${\nu}_2={\nu}_{21}$,
with
${\nu}_{11},{\nu}_{21}$ as in Lemma \ref{l:Nprime}.
By (\ref{eq:defsol1}),
 $\tilde u(\cdot,t)\in
{\mathcal H}_t$ for a.e. $t\in (0,T)$, hence there exists a $T_0\in (0,2\alpha)$ such that
$\tilde u(\cdot,T_0)\in {\mathcal H}_{T_0}$.
We now prove the existence of the limit of ${\mathcal N}(t)$ as $t\to 0^+$.

\begin{Lemma}\label{l:limit}
The limit
$\gamma:=\lim_{t\to 0^+}{\mathcal N}(t)$
exists and it is finite.
\end{Lemma}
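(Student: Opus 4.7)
\begin{pf}[Proof sketch of Lemma \ref{l:limit}]
The plan is to show that $\mathcal{N}$ satisfies a perturbed monotonicity relation whose integrating factor has finite total variation near $0$, so that an auxiliary function built from $\mathcal{N}$ is nondecreasing and bounded, and hence admits a finite limit at $0$.

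First I set $f(t):=\mathcal{N}(t)+\frac{N-2}{4}$ for $t\in(0,2\alpha)$. Corollary \ref{c:pos_per} applied to $\tilde u(\cdot,t)\in\mathcal{H}_t$ (for a.e. $t$) ensures that $tD(t)+\frac{N-2}{4}H(t)\ge0$, so $f(t)\ge0$ wherever it is defined; by continuity of $\mathcal{N}$ (which is $W^{1,1}_{\rm loc}$) and the fact that $H(t)>0$ throughout $(0,2\alpha)$ from Proposition \ref{p:t_0=0}, we have $f\ge0$ on $(0,2\alpha)$. Fix $T_0\in(0,2\alpha)$ such that $\tilde u(\cdot,T_0)\in\mathcal{H}_{T_0}$, as guaranteed by \eqref{eq:defsol1}. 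By Lemma \ref{l:Nprime} (with $j=1$) we have $\mathcal{N}'=\nu_1+\nu_2$ a.e. on $(0,T_0)$, and Schwarz's inequality yields $\nu_1\ge0$.

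The key estimate is the bound \eqref{eq:est_nu2} derived in the proof of Lemma \ref{l:Nabove}, which gives
\[
|\nu_2(t)|\le C_4\,f(t)\,g(t)\quad\text{a.e. on }(0,T_0),
\]
where $g(t)=t^{-1+\varepsilon/2}+\|h_t(\cdot,-t)\|_{L^{N/2}(\R^N)}$ if $N\ge3$ (respectively with $L^p$ if $N=2$). Since $\varepsilon\in(0,2)$ the function $t\mapsto t^{-1+\varepsilon/2}$ is integrable on $(0,T_0)$, and assumption \eqref{eq:der} combined with H\"older's inequality (the interval is bounded and $r>1$) guarantees $t\mapsto\|h_t(\cdot,-t)\|_{L^{N/2}}\in L^1(0,T_0)$. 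Thus $g\in L^1(0,T_0)$. Combining with $\nu_1\ge0$ we obtain the distributional differential inequality
\[
f'(t)=\mathcal{N}'(t)\ge -C_4\,f(t)\,g(t)\quad\text{a.e. on }(0,T_0).
\]

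Now define the auxiliary function
\[
\Phi(t):=f(t)\,\exp\!\bigg(C_4\!\int_0^t g(s)\,ds\bigg),\qquad t\in(0,T_0).
\]
A direct computation in the $W^{1,1}_{\rm loc}$ sense shows $\Phi'(t)=[f'(t)+C_4f(t)g(t)]\exp(C_4\int_0^t g)\ge0$ a.e., so $\Phi$ is nondecreasing on $(0,T_0)$. As $\Phi\ge0$, the monotone limit $L:=\lim_{t\to0^+}\Phi(t)$ exists in $[0,\Phi(T_0)]$, hence is finite; Lemma \ref{l:Nabove} already provides an a priori bound on $\Phi(T_0)$, though it is not even needed for existence of the limit. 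Since $g\in L^1(0,T_0)$, the integrating factor $\exp(C_4\int_0^t g(s)\,ds)$ tends to $1$ as $t\to0^+$, whence $\lim_{t\to0^+}f(t)=L$ and therefore
\[
\gamma=\lim_{t\to0^+}\mathcal{N}(t)=L-\frac{N-2}{4}\in\R,
\]
which is the desired conclusion.

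The technical heart is the bound on $\nu_2$ and the verification that $f\ge0$; once these are in hand, the integrating-factor argument is essentially automatic. I expect the main care to be needed in making the distributional derivative rigorous (in particular, that the product rule used to differentiate $\Phi$ is valid for $W^{1,1}_{\rm loc}$ functions times an absolutely continuous factor).
\end{pf}
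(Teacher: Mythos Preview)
Your proof is correct and rests on the same ingredients as the paper's: the nonnegativity of $\nu_1$ by Schwarz, the pointwise bound \eqref{eq:est_nu2} on $\nu_2$, and the integrability of $g(t)=t^{-1+\varepsilon/2}+\|h_t(\cdot,-t)\|$ on $(0,T_0)$ coming from \eqref{eq:der}.

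The packaging of the final step differs slightly. The paper first invokes the a priori upper bound on $\mathcal N$ from Lemma~\ref{l:Nabove} to conclude that $f=\mathcal N+\tfrac{N-2}{4}$ is bounded, whence $\nu_2\in L^1(0,T_0)$ directly; then $\mathcal N'$ is a nonnegative function plus an $L^1$ function, so $\mathcal N(t)=\mathcal N(T_0)-\int_t^{T_0}\mathcal N'$ has a limit, finite by the two-sided boundedness of $\mathcal N$. Your integrating-factor (Gronwall) argument instead builds monotonicity of $\Phi(t)=f(t)\exp\!\big(C_4\int_0^t g\big)$ straight from the differential inequality $f'\ge -C_4fg$, so existence and finiteness of the limit follow from monotonicity and nonnegativity of $\Phi$ alone, without appealing to Lemma~\ref{l:Nabove} (you correctly note it is available but not needed). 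Both routes are short; yours is marginally more self-contained, while the paper's makes explicit that $\nu_2\in L^1$, a fact reused later (e.g.\ in Lemma~\ref{stimaH}).
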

\begin{pf}
We first observe that, in view of Corollary \ref{c:pos_per},
$tD(t)\geq \big(C_1-\frac{N-2}4\big)H(t)$  for all $t\in(0,2\alpha)$. Hence
${\mathcal N}(t)\geq  C_1-\frac{N-2}4$, i.e. $\mathcal N$ is bounded
from below. 
Let $T_0$ be as above.  By Schwarz's inequality,
${\nu}_1(t)\geq 0$ for a.e. $t\in (0,T_0)$. Furthermore, Lemma
\ref{l:Nabove} and estimate \eqref{eq:est_nu2}, together with
assumption \eqref{eq:der}, imply that ${\nu}_2\in L^{1}(0,T_0)$. 
Hence ${\mathcal N}'(t)$ is  the
sum of a nonnegative function and of a $L^1$ function over $(0,T_0)$.
Therefore, ${\mathcal N}(t)={\mathcal N}(T_0)-\int_{t}^{T_0}{\mathcal N}'(s)\,ds$
admits a limit as $t\rightarrow 0^{+}$. We conclude by observing that
such a limit is finite since $\mathcal N$ is bounded from below (as
observed at the beginning of the proof) and from above (due to Lemma
\ref{l:Nabove}) in the interval $(0,T_0)$.
\end{pf}

\begin{Lemma}\label{stimaH}
  Let $\gamma:=\lim_{t\rightarrow 0^+} {\mathcal N}(t)$ be as in Lemma
  \ref{l:limit}.  Then there exists a constant $K_1>0$ such that
\begin{equation}\label{eq:52}
H(t)\leq K_1 t^{2\gamma}  \quad \text{for all } t\in (0,T_0).
\end{equation}
Furthermore, for any $\sigma>0$, there exists a constant
$K_2(\sigma)>0$ depending on $\sigma$ such that
\begin{equation}\label{eq:53}
  H(t)\geq K_2(\sigma)\, t^{2\gamma+\sigma}\quad \text{for all } t\in (0,T_0).
\end{equation}
\end{Lemma}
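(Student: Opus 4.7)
The plan is to reduce both bounds to an ODE analysis of $\log H$ driven by $\mathcal N$, and to control $\mathcal N(t)-\gamma$ via the decomposition $\mathcal N'=\nu_1+\nu_2$ furnished by Lemma \ref{l:Nprime}. The starting identity is the immediate consequence of \eqref{eq:10i} and the definition of $\mathcal N$,
\begin{equation*}
\frac{d}{dt}\log H(t)=\frac{H'(t)}{H(t)}=\frac{2D(t)}{H(t)}=\frac{2\mathcal N(t)}{t},\qquad t\in(0,T_0),
\end{equation*}
so that integrating on $[t,T_0]$ gives $H(t)=H(T_0)\exp\bigl(-\int_t^{T_0}\frac{2\mathcal N(s)}{s}ds\bigr)$. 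Thus both inequalities \eqref{eq:52} and \eqref{eq:53} will follow once $\mathcal N(s)$ is pinched around $\gamma$ in a sharp way near $s=0$.

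For the upper bound on $H$ I would produce the quantitative lower bound $\mathcal N(t)\geq \gamma-C_{5}t^{\delta}$ on $(0,T_0)$ for some $\delta>0$. Since $\mathcal N$ is absolutely continuous on compact subsets of $(0,2\alpha)$ with derivative $\nu_1+\nu_2$, and $\nu_1\geq0$ by Schwarz, one has
\begin{equation*}
\mathcal N(t)-\gamma=\int_0^t\nu_1(s)\,ds+\int_0^t\nu_2(s)\,ds\geq -\int_0^t|\nu_2(s)|\,ds.
\end{equation*}
The estimate \eqref{eq:est_nu2}, combined with the already established boundedness of $\mathcal N+\tfrac{N-2}{4}$ on $(0,T_0)$ from Lemma \ref{l:Nabove} and Corollary \ref{c:pos_per}, yields $|\nu_2(s)|\leq C\bigl(s^{-1+\varepsilon/2}+\|h_t(\cdot,-(s+a_1))\|_{L^{N/2}(\R^N)}\bigr)$ (with $L^p$ in place of $L^{N/2}$ if $N=2$); Hölder's inequality together with assumption \eqref{eq:der}, which gives $h_t\in L^r$ in time with $r>1$, then produces
\begin{equation*}
\int_0^t|\nu_2(s)|\,ds\leq C_{5}\,t^{\delta},\qquad \delta=\min\bigl\{\tfrac{\varepsilon}{2},\tfrac{r-1}{r}\bigr\}>0.
\end{equation*}
Inserting $\mathcal N(s)\geq \gamma-C_{5}s^{\delta}$ into the displayed integral representation of $H$ and integrating $s^{\delta-1}$ explicitly on $[t,T_0]$ gives \eqref{eq:52} with $K_1:=H(T_0)T_0^{-2\gamma}\exp\bigl(\tfrac{2C_{5}}{\delta}T_0^{\delta}\bigr)$.

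For the lower bound \eqref{eq:53}, I exploit only the existence of the limit $\gamma=\lim_{t\to0^+}\mathcal N(t)$ from Lemma \ref{l:limit}: for each $\sigma>0$ there exists $t_\sigma\in(0,T_0)$ such that $\mathcal N(t)\leq \gamma+\sigma/2$ for all $t\in(0,t_\sigma)$. Plugging this into the ODE yields, after integration on $[t,t_\sigma]$,
\begin{equation*}
\log\frac{H(t_\sigma)}{H(t)}=\int_t^{t_\sigma}\frac{2\mathcal N(s)}{s}\,ds\leq (2\gamma+\sigma)\log\frac{t_\sigma}{t},
\end{equation*}
which gives $H(t)\geq H(t_\sigma)t_\sigma^{-(2\gamma+\sigma)}t^{2\gamma+\sigma}$ on $(0,t_\sigma)$. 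On the remaining compact interval $[t_\sigma,T_0)$ the estimate is trivial because $H$ is continuous and strictly positive by Proposition \ref{p:t_0=0}(iii). Taking $K_2(\sigma)$ to be the minimum of the two resulting constants completes the proof. The only delicate step is the derivation of the power-type remainder $Ct^{\delta}$ bounding $\int_0^t|\nu_2|$; all other manipulations are clean ODE integrations.
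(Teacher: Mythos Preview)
Your proof is correct and follows essentially the same approach as the paper: both reduce to the ODE $(\log H)'=2\mathcal N/t$, derive the one-sided bound $\mathcal N(t)\geq\gamma-C_5 t^{\delta}$ with $\delta=\min\{\varepsilon/2,1-1/r\}$ from $\nu_1\geq0$, \eqref{eq:est_nu2}, Lemma \ref{l:Nabove}, and H\"older in time, and then integrate to obtain \eqref{eq:52}; the lower bound \eqref{eq:53} is obtained identically from the mere existence of the limit $\gamma$, with the extension to $[t_\sigma,T_0)$ by continuity and positivity of $H$. Your write-up is in fact slightly more explicit (giving the constant $K_1$ and spelling out the compact-interval argument), but there is no substantive difference.
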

\begin{pf}
 From Lemma \ref{l:Nprime},  Schwarz's inequality, 
 \eqref{eq:est_nu2}, and assumption \eqref{eq:der} we deduce that 
\begin{align*}
  {\mathcal N}(t)-\gamma&=\int_0^t ({\nu}_1(s)+{\nu}_2(s))ds \geq
  \int_0^t {\nu}_2(s)ds\\[5pt]
&\geq
\begin{cases}
-  C_3C_4\big({\mathcal N}(T_0)+{\textstyle{\frac{N-2}{4}}}\big)
\int_0^t
\Big(s^{-1+\e/2}+\|h_t(\cdot,-s)\|_{L^{N/2}(\R^N)}\Big)\,ds,
  &\text{if }N\geq3,\\[5pt]
-  C_3C_4{\mathcal N}(T_0)
\int_0^t
\Big(s^{-1+\e/2}+\|h_t(\cdot,-s)\|_{L^{p}(\R^N)}\Big)\,ds,
  &\text{if }N=2,
\end{cases}\\[5pt]
&\geq
\begin{cases}
-  C_3C_4\big({\mathcal N}(T_0)+{\textstyle{\frac{N-2}{4}}}\big)
\Big(\frac2\e t^{\e/2}+\|h_t\|_{L^{r}((-T,0),L^{{N}/{2}}(\R^N))}t^{1-1/r}\Big),
  &\text{if }N\geq3,\\[5pt]
-   C_3C_4{\mathcal N}(T_0) \Big(\frac2\e t^{\e/2}+\|h_t\|_{L^{r}((-T,0),L^{p}(\R^N))}t^{1-1/r}\Big),
  &\text{if }N=2
\end{cases}
\\[5pt]
&\geq - C_5t^\delta
\end{align*}
with 
\begin{equation}\label{eq:delta}
\delta=\min\left\{\frac\e2,1-\frac 1r\right\}
\end{equation}
for some constant $C_5>0$ and for all $t\in(0,T_0)$.
 From the above estimate and 
\eqref{eq:10i}, we deduce that
$( \log H(t))'=\frac{H'(t)}{H(t)}=\frac{2}{t}{\mathcal N}(t)\geq
\frac2t\gamma-2 C_5t^{-1+\delta}$, which, after integration over $(t,
T_0)$, yields \eqref{eq:52}.

Since $\gamma=\lim_{t\rightarrow 0^+}
{\mathcal N}(t)$, for any $\sigma>0$ there exists $t_\sigma>0$ such
that ${\mathcal N}(t)<\gamma+\sigma/2$ for any $t\in
(0,t_\sigma)$. Hence 
$\frac{H'(t)}{H(t)}=\frac{2\,{\mathcal
    N}(t)}{t}<\frac{2\gamma+\sigma}{t}$ 
which, by integration over $(t,t_\sigma)$ and continuity of $H$
outside $0$, implies \eqref{eq:53}  for some constant $K_2(\sigma)$
depending on $\sigma$.
\end{pf}

\section{Blow-up analysis}\label{sec:blow-up-analysis}

Once the monotonicity type Lemma \ref{l:limit} is established, our
next step is a blow-up analysis for scaled solutions to
(\ref{probtilde}), which can be performed following the procedure
developed in \cite{FP} 

Let $\tilde u$ be a weak solution to
  (\ref{probtilde}) in $\R^N\times(0,T)$ with  $h$ satisfying
  \eqref{eq:der}--\eqref{eq:h} in $I=(-T,0)$.
For every $\lambda>0$,
we define 
\begin{equation*}
\tilde u_{\lambda}(x,t)=\tilde u(\lambda x,\lambda^2t).
\end{equation*}
We observe that $\tilde u_{\lambda}$ weakly solves
\begin{equation}\label{lambda}
-(\tilde u_{\lambda})_t+\mathcal L_{\A,a}\tilde u_\lambda=\lambda^2
h(\lambda x, -\lambda^2 t)\tilde u_{\lambda}\quad\text{in }\R^N\times (0,T/\lambda^2).
\end{equation}
We can associate to the  scaled
equation (\ref{lambda}) the Almgren frequency function 
\begin{equation}\label{eq:34}
{\mathcal N}_\lambda(t)=\frac{t\,D_\lambda(t)}{H_\lambda(t)},
\end{equation}
where
\begin{align*}
  D_{\lambda}(t)&=
                  \int_{\R^N}\!\!\bigg(|\nabla_\A \tilde u_\lambda(x,t)|^2-
                  \dfrac{a\big({x}/{|x|}\big)}{|x|^2}|\tilde
                  u_\lambda(x,t)|^2-\lambda^2 h(\lambda x,-\lambda^2
                  t)|\tilde u_\lambda(x,t)|^2\bigg)\, G(x,t)\,dx,\\
                  H_{\lambda}(t)&=\int_{\R^N}|\tilde u_{\lambda}(x,t)|^2 G(x,t)\,dx.
\end{align*}
By scaling and a suitable change
of variables we easily see that
\begin{align}\label{eq:19}
D_\lambda(t)=\lambda^2D(\lambda^2t)\quad\text{and}
\quad H_\lambda(t)=H(\lambda^2t),
\end{align}
so that
\begin{align}\label{eq:scale_for_N}
{\mathcal N}_\lambda(t)={\mathcal N}(\lambda^2t)\quad
\text{for all }t\in\Big(0,{\frac{2\alpha}{\lambda^2}}\Big).
\end{align}

\begin{Lemma}\label{l:blow_up}
  Let $N\geq2$,
  ${\mathbf{A}}\in C^1({\mathbb{S}} ^{N-1},{\mathbb{R}}^N)$ and let
  $a\in L^{\infty}\big({\mathbb S}^{N-1},\R\big)$ satisfy
  \eqref{transversality} and \eqref{eq:hardycondition}. Let
  $\tilde u\not\equiv 0$ be a nontrivial weak solution to
  (\ref{probtilde}) in $\R^N\times(0,T)$ with $h$ satisfying
  \eqref{eq:der}--\eqref{eq:h} in $I=(-T,0)$. Let
  $\gamma:=\lim_{t\to 0^+}{\mathcal N}(t)$ as in Lemma \ref{l:limit}.
  Then
\begin{itemize}
\item[(i)] $\gamma$ is an eigenvalue of the operator $L_{\A,a}$ defined in
  (\ref{eq:13});
\item[(ii)] for every  sequence $\lambda_n\to 0^+$,
there exists a subsequence $\{\lambda_{n_k}\}_{k\in\N}$
and an eigenfunction $g$ of the operator $L_{\A,a}$ associated to the
eigenvalue  $\gamma$
such that, for all $\tau\in (0,1)$,
\begin{equation*}
\lim_{k\to+\infty}\int_\tau^1
\bigg\|\frac{\tilde u(\lambda_{n_k}x,\lambda_{n_k}^2t)}{\sqrt{H(\lambda_{n_k}^2)}}
-t^{\gamma}g(x/\sqrt t)\bigg\|_{{\mathcal H}_t}^2dt=0
\end{equation*}
and
$$
\lim_{k\to+\infty}\sup_{t\in[\tau,1]}
\bigg\|\frac{\tilde u(\lambda_{n_k}x,\lambda_{n_k}^2t)}{\sqrt{H(\lambda_{n_k}^2)}}
-t^{\gamma}g(x/\sqrt t)\bigg\|_{{\mathcal L}_t}=0.
$$
\end{itemize}
\end{Lemma}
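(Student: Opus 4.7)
The plan is a classical blow-up scheme adapted to the parabolic Almgren setting of Section~\ref{sec:almgren}. I would work with the normalized rescaling
\[
w_\lambda(x,t):=\frac{\tilde u(\lambda x,\lambda^2 t)}{\sqrt{H(\lambda^2)}},
\]
which satisfies $\|w_\lambda(\cdot,1)\|_{\mathcal L_1}=1$ and, by~\eqref{lambda}, solves $-(w_\lambda)_t+\mathcal L_{\A,a}w_\lambda=\lambda^2 h(\lambda x,-\lambda^2 t)\,w_\lambda$ on $\R^N\times(0,T/\lambda^2)$. The whole argument rests on the scaling identities~\eqref{eq:19}--\eqref{eq:scale_for_N}. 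Since Lemma~\ref{l:limit} gives $\mathcal N(s)\to\gamma$ as $s\to 0^+$, we have $\mathcal N_\lambda(t)=\mathcal N(\lambda^2 t)\to \gamma$ uniformly on compact subsets of $(0,+\infty)$; integrating the identity $\frac{d}{dt}\log H_\lambda(t)=\frac{2}{t}\mathcal N_\lambda(t)$ between $t$ and $1$ yields, for every $\tau\in(0,1)$,
\[
\phi_\lambda(t):=\frac{H(\lambda^2 t)}{H(\lambda^2)}\longrightarrow t^{2\gamma}\quad\text{uniformly on }[\tau,1]\text{ as }\lambda\to 0^+.
\]

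Combining these pointwise limits with Corollary~\ref{c:pos_per} gives a uniform bound on $\int_\tau^1\|w_\lambda(\cdot,t)\|_{\mathcal H_t}^2\,dt$, and testing the scaled equation with $w_\lambda$ gives a uniform $L^2(\tau,1;\mathcal H_t^\star)$ bound on $(w_\lambda)_t+\frac{\nabla w_\lambda\cdot x}{2t}$. Reflexivity provides a subsequence with $w_{\lambda_{n_k}}\rightharpoonup w$ in these spaces; Aubin--Lions compactness combined with Lemma~\ref{l:compact} upgrades this to strong $L^2_{\rm loc}$ convergence, and, along a further subsequence, to pointwise-in-time convergence. To absorb the perturbative term I would use~\eqref{eq:h} in the form $\lambda^2|h(\lambda x,-\lambda^2 t)|\leq C_h(\lambda^2+\lambda^\e|x|^{-2+\e})$; the Hardy inequality (Lemma~\ref{Hardy_aniso}) and the weighted Sobolev embeddings (Lemma~\ref{l:sob}) then yield $\lambda^2 h(\lambda\cdot,-\lambda^2\cdot)w_\lambda\to 0$ in $L^2(\tau,1;\mathcal H_t^\star)$, so that passage to the weak limit in the scaled equation gives $-w_t+\mathcal L_{\A,a}w=0$ in $\R^N\times(0,1)$.

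The identification of $w$ is the delicate step. From the pointwise limits $\mathcal N_\lambda(t)\to\gamma$ and $\phi_\lambda(t)\to t^{2\gamma}$, together with lower semicontinuity of the Dirichlet-type form under weak convergence, the Almgren quotient of $w$ equals the constant~$\gamma$ for all $t\in(0,1]$. Inspecting the decomposition $\mathcal N'={\nu}_1+{\nu}_2$ of Lemma~\ref{l:Nprime} applied to $w$ (so that ${\nu}_2\equiv 0$ by $h\equiv 0$ in the limit problem) and using the equality case of Cauchy--Schwarz, the vanishing of ${\nu}_1$ forces $w_t+\frac{\nabla w\cdot x}{2t}=\kappa(t)w$ for some scalar $\kappa(t)$. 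Setting $v(y,t):=w(\sqrt t\,y,t)$, the relation becomes $v_t=\kappa(t)v$, hence $v(y,t)=\varphi(t)\,g(y)$ with $g:=v(\cdot,1)$; the $\mathcal L_t$-norm condition $\|w(\cdot,t)\|_{\mathcal L_t}^2=t^{2\gamma}$ fixes $\varphi(t)=t^\gamma$, giving the self-similar profile
\[
w(x,t)=t^\gamma\,g(x/\sqrt t),\qquad g\in\mathcal H\setminus\{0\},\quad\|g\|_{\mathcal L}=1.
\]
Substituting into $-w_t+\mathcal L_{\A,a}w=0$ and evaluating at $t=1$ produces $L_{\A,a}g=\mathcal L_{\A,a}g+\tfrac12 x\cdot\nabla g=\gamma\,g$, proving (i).

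Finally, the strong convergence claims in (ii) follow from weak convergence in a Hilbert space together with convergence of norms. Since $\|w_{\lambda_{n_k}}(\cdot,t)\|_{\mathcal L_t}^2=\phi_{\lambda_{n_k}}(t)\to t^{2\gamma}=\|t^\gamma g(\cdot/\sqrt t)\|_{\mathcal L_t}^2$, weak plus norm convergence upgrades to strong convergence in $L^2(\tau,1;\mathcal L_t)$; an analogous argument for the full $\mathcal H_t$ norm is available through the identity $tD_\lambda(t)/H(\lambda^2)=\mathcal N_\lambda(t)\phi_\lambda(t)\to\gamma\,t^{2\gamma}$ and Corollary~\ref{c:pos_per}, yielding strong convergence in $L^2(\tau,1;\mathcal H_t)$, and the uniform $\mathcal H_t^\star$-bound on the time derivative propagates this to uniform convergence on $[\tau,1]$ in $\mathcal L_t$. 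The principal obstacle I anticipate is the self-similar reduction of paragraph three: proving that the limit frequency equals \emph{exactly} $\gamma$ (rather than $\leq\gamma$ by weak lower semicontinuity) and then extracting the ansatz $t^\gamma g(x/\sqrt t)$ from the vanishing of the Cauchy--Schwarz defect ${\nu}_1$. A second, more technical point is the careful handling of the potential term in the dual norm, where the $\lambda^\e$-smallness from~\eqref{eq:h} must be balanced against the singular weights of Section~\ref{sec:parabolic-hardy-type}.
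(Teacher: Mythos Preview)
Your blow-up scheme is the same as the paper's and the technical steps line up closely: normalized rescaling $w_\lambda$, uniform $\mathcal H_t$-bounds from the frequency bound and Corollary~\ref{c:pos_per}, dual bounds on the time derivative from the scaled equation, compactness (the paper passes to the self-similar variable $\widetilde w_\lambda(x,t)=w_\lambda(\sqrt t\,x,t)$ and invokes \cite[Corollary~8]{S} to obtain $C^0([\tau,1],\mathcal L)$ convergence directly), vanishing of the $h$-term precisely via $\lambda^2|h(\lambda x,-\lambda^2 t)|\le C_h(\lambda^2+\lambda^\e|x|^{-2+\e})$, passage to the limit equation, and the Cauchy--Schwarz equality argument to extract the self-similar profile $w(x,t)=t^\gamma g(x/\sqrt t)$.

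The one genuine reordering concerns your flagged obstacle, and here your worry about lower semicontinuity is unnecessary. The paper does \emph{not} argue via semicontinuity: it first proves strong $L^2(\tau,1;\mathcal H)$ convergence by testing the difference of the scaled and limit equations against $\widetilde w_{\lambda_{n_k}}-\widetilde w$ (an energy identity plus the $C^0(\mathcal L)$ convergence and the vanishing $h$-term), and \emph{then} passes $D_{\lambda_{n_k}}(t)\to D_w(t)$ to the limit to get $\mathcal N_w\equiv\gamma$. Your route can be closed even more directly: once $H_w(t)=\lim\phi_{\lambda_{n_k}}(t)=t^{2\gamma}$ (from your uniform limit $\phi_\lambda\to t^{2\gamma}$ and $\mathcal L_t$-convergence), the identity $H_w'=2D_w$ for the unperturbed limit problem gives $D_w(t)=\gamma t^{2\gamma-1}$, hence $\mathcal N_w\equiv\gamma$ with no inequality at all; your weak-plus-norm upgrade to strong $\mathcal H_t$ convergence then works through Corollary~\ref{c:pos_def}, since the quadratic form there is an equivalent Hilbert norm and you have shown its values converge. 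Both orderings are valid; the paper's (strong convergence first, identification second) avoids the auxiliary limit $\phi_\lambda\to t^{2\gamma}$, while yours exploits that limit to bypass the energy-difference computation.
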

\begin{pf}
Let
\begin{align}\label{eq:33}
w_{\lambda}(x,t):=\frac{\tilde u_\lambda(x,t)}{\sqrt{H(\lambda^2)}},
\end{align}
with $\lambda\in(0,\sqrt T_0)$, so that $1<T_0/\lambda^2$.  From Lemma
\ref{l:Hcreas} it follows that, for all $t\in(0,1)$,
\begin{equation}\label{eq:21}
  \int_{\R^N}|w_{\lambda}(x,t)|^2G(x,t)\,dx
  =\frac{H(\lambda^2t)}{H(\lambda^2)}\leq t^{2C_1-\frac{N-2}{2}},
\end{equation}
with $C_1$  as in Corollary \ref{c:pos_per}.
Lemma \ref{l:Nabove}, Corollary \ref{c:pos_per}, and \eqref{eq:19} imply that
\begin{multline*}
  \frac1t \bigg(-\frac{N-2}4+{C_3}\bigg({\mathcal
    N}(T_0)+\frac{N-2}4\bigg)\bigg) H_\lambda(t)\geq
  \lambda^2D(\lambda^2 t)\\
  \geq \frac1t\bigg(C_1-\frac{N-2}{4}\bigg)H_\lambda(t)+C_1
  \int_{\R^N}\bigg(|\nabla \tilde u_{\lambda}(x,t)|^2
+\frac{|\tilde u_{\lambda}(x,t)|^2}{|x|^2}\bigg)G(x,t)\,dx
\end{multline*}
and hence, in view of \eqref{eq:21},
\begin{equation}\label{eq:20}
  t\int_{\R^N}\bigg(|\nabla w_{\lambda}(x,t)|^2
+\frac{|w_{\lambda}(x,t)|^2}{|x|^2}\bigg)
G(x,t)\,dx\leq
C_1^{-1} \Big({\textstyle{C_3\big({\mathcal
    N}(T_0)+\frac{N-2}4\big)-C_1}}\Big)t^{2C_1-\frac{N-2}{2}}
\end{equation}
for a.e. $t\in (0,1)$.
By scaling, we have that  the family of functions
\begin{equation*}
\widetilde{w}_{\lambda}(x,t)=w_{\lambda}(\sqrt{t}x,t)=\dfrac{\tilde u (\lambda \sqrt{t}x,
  \lambda^2t)}{\sqrt{H(\lambda^2)}}
\end{equation*}
satisfies
\begin{equation}\label{eq:22}
  \int_{\R^N} |\widetilde{w}_{\lambda}(x,t)|^{2} G(x,1)\,dx
  =\int_{\R^N} |w_{\lambda}(x,t) |^{2} G(x,t)\,dx
\end{equation}
and
\begin{equation}\label{eq:23}
  \int_{\R^N} \bigg(|\nabla\widetilde{w}_{\lambda}(x,t)|^{2}+\frac{|\widetilde{w}_{\lambda}(x,t)|^2}{|x|^2}\bigg)
G(x,1)\,dx
  =t\int_{\R^N}\bigg(|\nabla  w_{\lambda}(x,t)|^{2}+\frac{| w_{\lambda}(x,t)|^2}{|x|^2}\bigg)
G(x,t)\,dx.
\end{equation}
From \eqref{eq:21}, \eqref{eq:20}, \eqref{eq:22}, and \eqref{eq:23},
we deduce that, for all  $\tau \in (0,1)$,
\begin{equation}\label{eq:27}
  \big\{\widetilde{w}_{\lambda}\big\}_{\lambda\in(0,\sqrt T_0)}
  \text{ is bounded in }
  L^\infty(\tau,1;{\mathcal H})
\end{equation}
uniformly with respect to $\lambda\in(0,\sqrt T_0)$.
Moreover $\widetilde{w}_{\lambda}$ solves the following weak equation: for all $\phi\in{\mathcal H}$,
\begin{multline}\label{eq:25}
  {\phantom{\big\langle}}_{{\mathcal H}^\star}\big\langle
  (\widetilde{w}_{\lambda})_t,\phi
  \big\rangle_{{\mathcal H}}
  =\frac1t\int_{\R^N}\bigg(\nabla_\A \widetilde{w}_{\lambda}(x,t) \cdot
  \overline{\nabla_\A \phi(x)}-
  \dfrac{a\big(\frac{x}{|x|}\big)}{|x|^2}\,\widetilde{w}_{\lambda}(x,t)
  \overline{\phi(x)}\\
  -\lambda^2t\, h\Big(\lambda\sqrt tx, -\lambda^2 t)\widetilde{w}_{\lambda}(x,t)\overline{\phi(x)}\bigg)G(x,1)\,dx.
\end{multline}
From (\ref{eq:h}) it follows that
\begin{align}\label{eq:26}
  &\lambda^2\left|\int_{\R^N}h(\lambda\sqrt
    tx, -\lambda^2 t)\widetilde{w}_{\lambda}(x,t)\overline{\phi(x)}G(x,1)\,dx\right|\\
  \notag&\leq
  C_h\lambda^2\int_{\R^N}|\widetilde{w}_{\lambda}(x,t)||\phi(x)|G(x,1)\,dx
  +C_h\frac{\lambda^\e}{t}\int_{\R^N}|x|^{-2+\e}
  |\widetilde{w}_{\lambda}(x,t)||\phi(x)|G(x,1)\,dx\\
  \notag& \leq
  C_h\lambda^2\|\widetilde{w}_{\lambda}(\cdot,t)\|_{\mathcal H}
  \|\phi\|_{\mathcal H}+ C_h\frac{\lambda^\e}{t}\int_{|x|\leq1} \frac{
    |\widetilde{w}_{\lambda}(x,t)||\phi(x)|}{|x|^2}G(x,1)\,dx\\
  \notag& \hskip6cm +C_h\frac{\lambda^\e}{t}\int_{|x|\geq1}
  |\widetilde{w}_{\lambda}(x,t)||\phi(x)|G(x,1)\,dx\\
  \notag&\leq
  C_h\frac{\lambda^\e}{t}\big(t\,\lambda^{2-\e}+2  \big) \|\widetilde{w}_{\lambda}(\cdot,t)\|_{\mathcal H}
  \|\phi\|_{\mathcal H}
\end{align}
for all $\lambda\in (0,\sqrt T_0)$ and a.e. $t\in(0,1)$. From
(\ref{eq:25}) and  (\ref{eq:26}), and Lemma \ref{Hardytemp} we deduce
that 
\begin{equation}\label{eq:89}
\|(\widetilde{w}_{\lambda})_t(\cdot,t)\|_{{\mathcal H}^\star}\leq
\frac{{\rm const}}{t}\|\widetilde{w}_{\lambda}(\cdot,t)\|_{\mathcal H}
\end{equation}
for all $\lambda\in (0,\sqrt T_0)$ and a.e. $t\in(0,1)$
and for some $\mathop{\rm const}>0$ independent of $t$ and $\lambda$.

In view of (\ref{eq:27}),  estimate (\ref{eq:89}) yields, for all
 $\tau \in (0,1)$,
\begin{equation}\label{eq:28}
  \big\{(\widetilde{w}_{\lambda})_t\big\}_{\lambda\in(0,\sqrt T_0)}
  \text{ is bounded in }
  L^\infty(\tau,1;{\mathcal H}^\star)
\end{equation}
uniformly with respect to $\lambda\in(0,\sqrt T_0)$.  From
(\ref{eq:27}), (\ref{eq:28}), and \cite[Corollary 8]{S}, we deduce
that $\big\{\widetilde{w}_{\lambda}\big\}_{\lambda\in(0,\sqrt T_0)}$  is relatively
compact in $C^0([\tau,1],{\mathcal L})$ for all $\tau\in (0,1)$.
 Therefore, by a diagonal argument, from any given sequence
 $\lambda_n\to 0^+$ we can extract a subsequence $\lambda_{n_k}\to0^+$ such that
\begin{equation}\label{eq:29}
  \widetilde{w}_{\lambda_{n_k}}\to\widetilde{w} \quad\text{in} \quad
  C^0([\tau,1],{\mathcal L})
\end{equation}
for all $\tau\in(0,1)$ and for some $\widetilde{w}\in
\bigcap_{\tau\in(0,1)}C^0([\tau,1],{\mathcal L})$.  From the fact that 
$1=\|\widetilde{w}_{\lambda_{n_k}}(\cdot,1)\|_{\mathcal L}$ and 
 the convergence (\ref{eq:29}) it follows that 
\begin{equation}\label{eq:41}
\|\widetilde{w}(\cdot,1)\|_{\mathcal L}=1.
\end{equation}
In particular $\widetilde{w}$ is nontrivial. Furthermore, by
(\ref{eq:27}) and (\ref{eq:28}), the subsequence can be chosen in such a way
that also
\begin{equation}\label{eq:30}
  \widetilde{w}_{\lambda_{n_k}}\weakly\widetilde{w} \quad\text{weakly in }
  L^2(\tau,1;{\mathcal H})
  \quad\text{and}\quad
  (\widetilde{w}_{\lambda_{n_k}})_t\weakly\widetilde{w}_t
  \quad\text{weakly in }
  L^2(\tau,1;{\mathcal H}^\star)
\end{equation}
for all $\tau\in(0,1)$, so that $\widetilde{w}\in
\bigcap_{\tau\in(0,1)}L^2(\tau,1;{\mathcal H})$ and
$\widetilde{w}_t\in \bigcap_{\tau\in(0,1)}L^2(\tau,1;{\mathcal
  H}^\star)$. Actually, we can prove that 
\begin{equation}\label{eq:31}
  \widetilde{w}_{\lambda_{n_k}}\to\widetilde{w} \quad\text{strongly in} \quad
  L^2(\tau,1;{\mathcal H})\quad\text{for all }\tau\in(0,1).
\end{equation}
To prove  claim \eqref{eq:31}, we notice that  (\ref{eq:30}) allows
passing to the limit in (\ref{eq:25}). Since 
(\ref{eq:26}) and (\ref{eq:27}) imply that the perturbation term
vanishes in such a limit, we conclude that 
\begin{equation}\label{eq:32}
  {\phantom{\big\langle}}_{{\mathcal
      H}^\star}\big\langle \widetilde{w}_t,\phi
  \big\rangle_{{\mathcal H}}=\frac1t\int_{\R^N}\bigg(\nabla_\A \widetilde{w}(x,t)
  \cdot \overline{\nabla_\A \phi(x)}-
  \dfrac{a\big(\frac{x}{|x|}\big)}{|x|^2}\,\widetilde{w}(x,t)
  \overline{\phi(x)}\bigg)G(x,1)\,dx
\end{equation}
for all $\phi\in{\mathcal H}$ and a.e. $t \in (0,1)$, i.e. $\widetilde
w$ is a weak solution to
\begin{equation*}
  \widetilde{w}_t+\frac1t\bigg(-\mathcal L_{\A,a}\widetilde{w}-
  \frac x2\cdot \nabla \widetilde{w}\bigg)=0.
\end{equation*}
Testing the difference between (\ref{eq:25}) and (\ref{eq:32}) with
$(\widetilde{w}_{\lambda_{n_k}}- \widetilde{w})$ and integrating
with respect to $t$ between $\tau$ and $1$, we obtain
\begin{multline*}
  \int_\tau^1\bigg(\int_{\R^N}\bigg(|\nabla_\A (\widetilde{w}_{\lambda_{n_k}}-
  \widetilde{w})(x,t)|^2-\frac{a(x/|x|)}{|x|^2}\,
  |(\widetilde{w}_{\lambda_{n_k}}- \widetilde{w})(x,t)|^2\bigg)
  \,G(x,1)\,dx\bigg)dt\\
  = \frac{1}2\|\widetilde{w}_{\lambda_{n_k}}(1)-
  \widetilde{w}(1)\|^2_{\mathcal L}-
  \frac{\tau}2\|\widetilde{w}_{\lambda_{n_k}}(\tau)-
  \widetilde{w}(\tau)\|^2_{\mathcal L}-\frac12\int_\tau^1\bigg(\int_{\R^N}
  |(\widetilde{w}_{\lambda_{n_k}}-
  \widetilde{w})(x,t)|^2  \,G(x,1)\,dx\bigg)dt\\
  +\lambda_{n_k}^2\int_\tau^1t\bigg(\int_{\R^N}
  h(\lambda_{n_k}\sqrt t x, -\lambda_{n_k}^2
  t)\Re\big(\widetilde{w}_{\lambda_{n_k}}(x,t)
\overline{(\widetilde{w}_{\lambda_{n_k}}- \widetilde{w})(x,t)}\big) G(x,1)\,dx \bigg)dt.
\end{multline*}
Then (\ref{eq:26}) and (\ref{eq:29}) imply that,
for all $\tau\in (0,1)$,
\begin{equation*}
\lim_{k\to+\infty} 
\int_\tau^1\bigg(\int_{\R^N}\bigg(|\nabla_\A (\widetilde{w}_{\lambda_{n_k}}-
  \widetilde{w})(x,t)|^2-\frac{a(x/|x|)}{|x|^2}\,
  |(\widetilde{w}_{\lambda_{n_k}}- \widetilde{w})(x,t)|^2\bigg)
  \,G(x,1)\,dx\bigg)dt=0,
\end{equation*}
which yields the convergence claimed in (\ref{eq:31}) in view of
Corollary \ref{c:pos_def} and (\ref{eq:29}).
Thus, we have obtained that, for all $\tau\in(0,1)$,
\begin{equation}\label{eq:35}
\lim_{k\to+\infty}\int_\tau^1
\|{w}_{\lambda_{n_k}}(\cdot,t)-w(\cdot,t)\|_{{\mathcal
    H}_t}^2dt=0\quad\text{and}\quad 
\lim_{k\to+\infty}\sup_{t\in[\tau,1]}
\|{w}_{\lambda_{n_k}}(\cdot,t)-w(\cdot,t)\|_{{\mathcal L}_t}=0,
\end{equation}
where
$w(x,t):=\widetilde{w}\big(\frac{x}{\sqrt t },t\big)$ 
is a weak solution to
\begin{equation}\label{eq:limit_equation}
w_t-\mathcal L_{\A,a}w=0.
\end{equation}
In view of (\ref{eq:34}) and (\ref{eq:33}), we can write $\mathcal
N_\lambda$ as 
\begin{equation*}
  {\mathcal N}_\lambda(t)
  =\frac{ t\int_{\R^N}\Big(|\nabla_\A w_{\lambda}(x,t)|^2-
    \frac{a(x/|x|)}{|x|^2}|w_{\lambda}(x,t)|^2-\lambda^2
    h\big(\lambda x,-\lambda^2 t)|w_{\lambda}(x,t)|^2\Big)G(x, t)\,dx}{\int_{\R^N}|w_{\lambda}(x,t)|^2
    G(x,t)\,dx}
\end{equation*}
for all $t\in(0,1)$.  By \eqref{eq:35}
${w}_{\lambda_{n_k}}(\cdot,t)\to w(\cdot,t)$ in ${\mathcal H}_t$ for
a.e. $t\in (0,1)$, and, by \eqref{eq:26},
\begin{equation*}
t\lambda^2_{n_k}\int_{\R^N}
    h(\lambda_{n_k} x,-\lambda_{n_k}^2 t)|w_{\lambda_{n_k}}(x,t)|^2G(x, t)\,dx
\to 0
\end{equation*}
for a.e. $t\in (0,1)$. Hence 
\begin{multline}\label{eq:43}
  \int_{\R^N}\Big(|\nabla_\A w_{\lambda_{n_k}}(x,t)|^2-
    \frac{a(x/{|x|})}{|x|^2}|w_{\lambda_{n_k}}(x,t)|^2-\lambda_{n_k}^2
    h\big(\lambda_{n_k} x,-\lambda_{n_k}^2 t)|w_{\lambda_{n_k}}(x,t)|^2\Big)G(x, t)\,dx\\ \to D_w(t)
\end{multline}
and
\begin{equation}\label{eq:44}
\int_{\R^N}|w_{\lambda_{n_k}}(x,t)|^2
  G(x,t)\,dx\to H_w(t)
\end{equation}
as $k\to+\infty$, for a.e. $t\in (0,1)$, where
\[
D_w(t)=\int_{\R^N}\bigg(|\nabla_\A w(x,t)|^2-
  \frac{a(x/|x|)}{|x|^2}|w(x,t)|^2\bigg)G(x, t)\,dx
\ \text{ and }\
H_w(t)=\int_{\R^N}|w(x,t)|^2
  G(x,t)\,dx.
\]
From (\ref{eq:41}) it follows  that 
  $\int_{\R^N}|w(x,1)|^2G(x,1)\,dx=1$,
 which,
arguing as in Proposition \ref{p:t_0=0},  implies
that $H_w(t)>0$ for all $t\in(0,1)$.
From (\ref{eq:43}) and (\ref{eq:44}), it follows that
\begin{equation}\label{eq:36}
{\mathcal N}_{\lambda_{n_k}}(t)\to {\mathcal N}_w(t)
\quad\text{for a.e. }t\in(0,1),
\end{equation}
where ${\mathcal N}_w$ is the Almgren frequency function associated to 
equation \eqref{eq:limit_equation}, i.e.
\begin{equation}\label{eq:Nw}
{\mathcal N}_w(t)=\frac{ tD_w(t)}
{H_w(t)},
\end{equation}
which is well defined on $(0,1)$ since $H_w(t)>0$ for all $t\in(0,1)$
as observed above.

On the other hand, \eqref{eq:scale_for_N} implies that ${\mathcal
  N}_{\lambda_{n_k}}(t)={\mathcal N}(\lambda^2_{n_k}t)$ for all $t\in
(0,1)$ and $k\in\N$.  Fixing $t\in (0,1)$ and passing to the limit as
$k\to+\infty$, from Lemma \ref{l:limit} we obtain
\begin{equation}\label{eq:37}
{\mathcal N}_{\lambda_{n_k}}(t)\to \gamma
\quad\text{for all }t\in(0,1).
\end{equation}
Combining \eqref{eq:36} and \eqref{eq:37}, we deduce that
\begin{equation}\label{eq:Nw-gamma}
{\mathcal N}_w(t)=\gamma\quad \text{for all }t\in(0,1).
\end{equation}
Therefore ${\mathcal N}_w$ is constant in $(0,1)$ and hence ${\mathcal
  N}_w'(t)=0$ for any $t\in (0,1)$.  By \eqref{eq:limit_equation} and
Lemma~\ref{l:Nprime} with $h\equiv 0$, we obtain that 
\[
\left(w_t(\cdot,t)+\frac{\nabla w(\cdot,t)\cdot
    x}{2t},w(\cdot,t)\right)^2_{{\mathcal L}_t}
=\left\|w_t(\cdot,t)+\frac{\nabla w(\cdot,t)\cdot
    x}{2t}\right\|^2_{{\mathcal L}_t} \|w(\cdot,t)\|^2_{{\mathcal
    L}_t},
\]
where $(\cdot,\cdot)_{{\mathcal L}_t}$ denotes the scalar product in
${\mathcal L}_t$.  Then there exists a  function $\beta:(0,1)\to\R$ such that
\begin{equation}\label{eq:38}
w_t(x,t)+\frac{\nabla w(x,t)\cdot x}{2t} =\beta(t)w(x,t)\quad\text{for
  a.e. }t\in(0,1)\text{ and a.e. }x\in\R^N.
\end{equation}
Testing \eqref{eq:limit_equation} with $\phi=w(\cdot,t)$ in the sense
of \eqref{eq:defsol2} and taking into account \eqref{eq:38},
we obtain that
\begin{align*}
D_w(t)=
{\phantom{\bigg\langle}}_{{\mathcal
    H}_t^\star}\bigg\langle
w_t(\cdot,t)+\frac{\nabla w(\cdot,t)\cdot x}{2t},w(\cdot,t)
\bigg\rangle_{{\mathcal H}_t}=\beta(t)H_w(t),
\end{align*}
which, in view (\ref{eq:Nw}) and (\ref{eq:Nw-gamma}), yields
$\beta(t)=\frac{\gamma}{t}$ for a.e. $t\in(0,1)$. Then (\ref{eq:38}) becomes
\begin{equation}\label{eq:45}
w_t(x,t)+\frac{\nabla w(x,t)\cdot x}{2t}
=\frac{\gamma}{t}\,w(x,t)\quad \text{for a.e. }(x,t)\in\R^N\times
(0,1)
\end{equation}
in a distributional sense.
 From (\ref{eq:45}) and (\ref{eq:limit_equation}) we conclude that
\begin{equation}\label{eq:46}
\mathcal L_{\A,a} w
+\frac{\nabla w(x,t)\cdot x}{2t}=
\frac{\gamma}{t}\,w(x,t)
\end{equation}
for a.e. $(x,t)\in\R^N\times
(0,1)$  and in a weak  sense.
From (\ref{eq:45}), it follows that, letting, for all $\eta>0$ and a.e.
$(x,t)\in\R^N\times(0,1)$, $w^\eta(x,t):=w(\eta x,\eta^2t)$,
there holds
$\frac{d
  w^\eta}{d\eta}=\frac{2\gamma}{\eta}w^\eta
$
a.e. and in a
distributional sense. An integration yields
\begin{equation}
  \label{eq:weta}
 w^\eta(x,t)=w(\eta x,\eta^2t)=\eta^{2\gamma}w(x,t) \quad\text{for all }\eta>0
\text{ and a.e. }(x,t)\in\R^N\times(0,1).
\end{equation}
The function $g(x)=w(x,1)$ satisfies $g\in{\mathcal L}$, $\|g\|_{{\mathcal L}}=1$,
 and,
from (\ref{eq:weta}),
\begin{equation}\label{eq:47}
w(x,t)=w^{\sqrt t }\Big(\frac{x}{\sqrt t
},1\Big)=t^{\gamma}w\Big(\frac{x}{\sqrt t },1\Big)=
t^{\gamma}g\Big(\frac{x}{\sqrt t}\Big)
\quad\text{for a.e. }(x,t)\in\R^N\times(0,1).
\end{equation}
In particular, from (\ref{eq:47}) it follows that $g\big({\cdot}/{\sqrt t}\big)\in
{\mathcal H}_t$ for a.e. $t\in (0,1)$ and hence, by scaling,
$g\in{\mathcal H}$. Moreover, from (\ref{eq:46}) and (\ref{eq:47}), we obtain
that $g\in{\mathcal H}\setminus\{0\}$ weakly solves
\begin{equation*}
  \mathcal L_{\A,a} g(x)
+\frac{\nabla g(x)\cdot x}{2}=
\gamma g(x),
\end{equation*}
i.e.
$\gamma$ is an eigenvalue of the operator $L_{\A,a}$ defined in (\ref{eq:13})
and $g$ is an eigenfunction of $L_{\A,a}$ associated to $\gamma$. The proof
is now complete.
\end{pf}

\noindent The next step is to determine the asymptotic behaviour of
the normalization term in the blow-up family \eqref{eq:33}. To this
aim, in the next two lemmas we study the limit $\lim_{t\to
  0^+}t^{-2\gamma}H(t)$.

\begin{Lemma} \label{l:limite} Under the same assumptions as in Lemma
  \ref{l:blow_up}, let $\gamma:=\lim_{t\rightarrow 0^+} {\mathcal N}(t)$ be as in
  Lemma \ref{l:limit}.  Then the limit
$\lim_{t\to 0^+}t^{-2\gamma}H(t)$ 
exists and it is finite.
\end{Lemma}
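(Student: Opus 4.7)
The plan is to derive an explicit exponential representation of $\Phi(t):=t^{-2\gamma}H(t)$ and then analyze the exponent using the sign and integrability properties of the two summands of $\mathcal N'$ established in Section \ref{sec:almgren}. Since Proposition \ref{p:t_0=0}(iii) ensures that $H(t)>0$ on $(0,2\alpha)$, $\Phi$ is positive and locally absolutely continuous on $(0,T_0]$. Combining $H'(t)=2D(t)$ from \eqref{eq:10i} with the definition $\mathcal N(t)=tD(t)/H(t)$ yields $(\log\Phi(t))'=2(\mathcal N(t)-\gamma)/t$, which integrates to
\begin{equation*}
\Phi(t)=\Phi(T_0)\exp\!\left(-2\int_t^{T_0}\frac{\mathcal N(s)-\gamma}{s}\,ds\right)\qquad\text{for all }t\in(0,T_0].
\end{equation*}

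The next step is to decompose the exponent. Since $\mathcal N(s)\to\gamma$ as $s\to 0^+$ by Lemma \ref{l:limit} and $\mathcal N'=\nu_1+\nu_2$ a.e.\ by Lemma \ref{l:Nprime}, with both $\nu_1$ and $\nu_2$ in $L^1(0,T_0)$ (as noted in the proof of Lemma \ref{l:limit}), we may write
\begin{equation*}
\mathcal N(s)-\gamma=A(s)+B(s),\qquad A(s):=\int_0^s\nu_1(\tau)\,d\tau,\quad B(s):=\int_0^s\nu_2(\tau)\,d\tau.
\end{equation*}
By the Schwarz inequality $\nu_1\ge 0$, so $A$ is nonnegative and nondecreasing on $(0,T_0)$. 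On the other hand, the pointwise bound \eqref{eq:est_nu2} combined with assumption \eqref{eq:der} gives
\begin{equation*}
|B(s)|\le C\,s^{\delta},\qquad \delta:=\min\!\left\{\tfrac{\varepsilon}{2},\,1-\tfrac{1}{r}\right\}>0,
\end{equation*}
exactly as in the proof of Lemma \ref{stimaH}.

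The two pieces of the exponent are then analyzed separately. Since $A(s)/s\ge 0$, the mapping $t\mapsto\int_t^{T_0}A(s)/s\,ds$ is nonincreasing in $t$, so its limit $L_A:=\lim_{t\to 0^+}\int_t^{T_0}A(s)/s\,ds\in[0,+\infty]$ exists. On the other hand, $|B(s)/s|\le C s^{\delta-1}$ is integrable on $(0,T_0)$, so $L_B:=\lim_{t\to 0^+}\int_t^{T_0}B(s)/s\,ds\in\mathbb R$ is finite. Combining the two limits,
\begin{equation*}
\lim_{t\to 0^+}\Phi(t)=\Phi(T_0)\,e^{-2(L_A+L_B)}\in\big[0,\,\Phi(T_0)\,e^{-2L_B}\big],
\end{equation*}
with the convention that the right-hand side equals $0$ when $L_A=+\infty$. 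In either case the limit exists and is finite, which concludes the proof.

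The main subtlety is that $\nu_1\in L^1(0,T_0)$ alone gives no pointwise decay rate for $A(s)$ as $s\to 0^+$, so one cannot bound $|\mathcal N(s)-\gamma|$ by a power of $s$ uniformly. The argument sidesteps this by using only the monotonicity of the $A$-integral -- made available by the Schwarz-based sign of $\nu_1$ -- while the delicate integrability estimate from \eqref{eq:est_nu2} is invoked only for the $B$-part. This yields existence and finiteness but leaves open the positivity of the limit; the latter will be recovered in the subsequent blow-up analysis from the nondegeneracy $\beta_{m,k}\neq 0$ for some $(m,k)\in J_0$ asserted in Theorem \ref{asym1}.
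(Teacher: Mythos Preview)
Your proof is correct and follows essentially the same approach that the paper (via its reference to \cite[Lemma 5.2]{FP}) would use: rewrite $t^{-2\gamma}H(t)$ via $(\log\Phi)'=2(\mathcal N-\gamma)/t$, split $\mathcal N-\gamma$ into the integrals of $\nu_1$ and $\nu_2$, and use the sign of $\nu_1$ for monotonicity together with the decay $|B(s)|\le Cs^\delta$ (already established in the proof of Lemma~\ref{stimaH}) for integrability of $B(s)/s$. One minor remark: the proof of Lemma~\ref{l:limit} only states $\nu_2\in L^1(0,T_0)$ explicitly; the fact that $\nu_1\in L^1(0,T_0)$ is a consequence of the finiteness of $\gamma$ combined with $\nu_2\in L^1$, which you could mention for completeness.
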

\begin{pf}
We omit the proof, since it is very similar to that of \cite[Lemma 5.2]{FP}.
\end{pf}

\noindent The limit  $\lim_{t\to
  0^+}t^{-2\gamma}H(t)$ is indeed strictly positive, as we prove below.

\begin{Lemma}\label{limite_pos}
  Under the same assumptions as in Lemma \ref{l:blow_up}, we have that
  $\lim_{t\to 0^+}t^{-2\gamma}H(t)>0$.
\end{Lemma}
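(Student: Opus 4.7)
The plan is to argue by contradiction: suppose $\ell:=\lim_{t\to 0^+}t^{-2\gamma}H(t)=0$, and derive that the blow-up limit $g$ produced in Lemma \ref{l:blow_up} must vanish, contradicting $\|g\|_{\mathcal L}=1$. The key tool is a Fourier-type expansion along the eigenbasis $\{\widetilde V_{m,k}\}$ of ${\mathcal L}$ (Remark \ref{rem:ortho}) relative to the eigenspace of $L_{\A,a}$ corresponding to $\gamma$, namely $J_0=\{(m,k):\gamma_{m,k}=\gamma\}$. For every $(m,k)\in J_0$ I would introduce the coefficient
\[
F_{m,k}(t):=\int_{\R^N}\tilde u(x,t)\,\overline{\widetilde V_{m,k}\big(\tfrac{x}{\sqrt t}\big)}\,G(x,t)\,dx,\qquad t\in(0,2\alpha),
\]
which, after the change of variable $y=x/\sqrt t$ and the definition \eqref{eq:33} of $w_\lambda$, is related to the projection of the blow-up profile in $\mathcal L$ through the identity
\[
\frac{F_{m,k}(\lambda^2)}{\sqrt{H(\lambda^2)}}=\int_{\R^N}\widetilde w_\lambda(x,1)\,\overline{\widetilde V_{m,k}(x)}\,G(x,1)\,dx.
\]

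Next I would derive an ODE for $F_{m,k}$. Differentiating under the integral and using the weak equation \eqref{probtilde} together with the eigenfunction identity $L_{\A,a}\widetilde V_{m,k}=\gamma\,\widetilde V_{m,k}$ (Proposition \ref{p:explicit_spectrum}), integration by parts and the transversality condition \eqref{transversality} give, after cancellations analogous to Lemma \ref{l:Dprime},
\[
F_{m,k}'(t)=\frac{\gamma}{t}F_{m,k}(t)+R_{m,k}(t),\qquad R_{m,k}(t)=\int_{\R^N}h(x,-t)\,\tilde u(x,t)\,\overline{\widetilde V_{m,k}\big(\tfrac{x}{\sqrt t}\big)}\,G(x,t)\,dx,
\]
a.e. on $(0,T_0)$. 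Variation of parameters then yields
\[
F_{m,k}(t)=t^{\gamma}\bigg(T_0^{-\gamma}F_{m,k}(T_0)-\int_t^{T_0}s^{-\gamma-1}R_{m,k}(s)\,ds\bigg).
\]
Using \eqref{eq:h}, Lemma \ref{Hardy_aniso}, Corollary \ref{cor:ineq} and the bound $H(s)\le K_1 s^{2\gamma}$ of Lemma \ref{stimaH}, together with the explicit form \eqref{eq:66} of $\widetilde V_{m,k}$ (which yields $|x|^{-\alpha_{k}}$ singularity only and polynomial-Gaussian decay, so $\widetilde V_{m,k}/|x|\in\mathcal L$), I would show that $s\mapsto s^{-\gamma-1}R_{m,k}(s)$ is integrable near $0$. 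Hence the improper integral $\int_0^{T_0}s^{-\gamma-1}R_{m,k}(s)\,ds$ converges and
\[
C_{m,k}:=T_0^{-\gamma}F_{m,k}(T_0)-\int_0^{T_0}s^{-\gamma-1}R_{m,k}(s)\,ds\in\C
\]
is a well-defined constant with $F_{m,k}(t)=C_{m,k}\,t^{\gamma}+o(t^{\gamma})$ as $t\to 0^+$.

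Now I would combine the two ingredients. On one hand, by the contradiction assumption $\sqrt{H(\lambda^2)}/\lambda^{2\gamma}\to 0$ as $\lambda\to 0^+$, whereas $F_{m,k}(\lambda^2)=C_{m,k}\lambda^{2\gamma}+o(\lambda^{2\gamma})$. On the other hand, applying Lemma \ref{l:blow_up}(ii) along an arbitrary sequence $\lambda_n\to 0^+$ and passing to a subsequence, $\widetilde w_{\lambda_{n_k}}(\cdot,1)\to g$ strongly in $\mathcal L$, so
\[
\frac{F_{m,k}(\lambda_{n_k}^2)}{\sqrt{H(\lambda_{n_k}^2)}}\longrightarrow (g,\widetilde V_{m,k})_{\mathcal L}\in\C.
\]
For these two asymptotics to be simultaneously compatible, necessarily $C_{m,k}=0$ for every $(m,k)\in J_0$, and therefore $(g,\widetilde V_{m,k})_{\mathcal L}=0$ for every $(m,k)\in J_0$. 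Since $g$ is an eigenfunction of $L_{\A,a}$ associated to $\gamma$, it lies in the finite-dimensional span of $\{\widetilde V_{m,k}:(m,k)\in J_0\}$, so orthogonality to all of them forces $g\equiv 0$, contradicting \eqref{eq:41}.

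The main obstacle is the derivation and justification of the ODE for $F_{m,k}$: both the differentiation under the integral with the moving test function $\widetilde V_{m,k}(\cdot/\sqrt t)$ and the cancellation of the drift term via the eigenfunction equation must be made rigorous at the weak level from Definition \ref{def:solution}, and the remainder $R_{m,k}$ must be estimated sharply enough, using \eqref{eq:h} and the precise homogeneity of $\widetilde V_{m,k}$, so that $s^{-\gamma-1}R_{m,k}(s)$ is integrable at the origin; this is where \eqref{eq:hardycondition}, the Hardy-type bounds of Section \ref{sec:parabolic-hardy-type}, and the upper bound $H(s)\lesssim s^{2\gamma}$ of Lemma \ref{stimaH} are all used together.
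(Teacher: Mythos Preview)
Your approach mirrors the paper's own proof: expand $\tilde u$ against the eigenbasis $\{\widetilde V_{m,k}\}$, derive a linear first-order ODE for the Fourier coefficients restricted to $(m,k)\in J_0$, and use the blow-up convergence of Lemma~\ref{l:blow_up} to reach a contradiction. The parameterization differs (you work in $t$, the paper in $\lambda=\sqrt t$; your $F_{m,k}(t)$ coincides with the paper's $u_{m,k}(\sqrt t)$), and there is a harmless exponent slip in the variation-of-parameters formula (the integrating factor $t^{-\gamma}$ produces $s^{-\gamma}R_{m,k}(s)$, not $s^{-\gamma-1}R_{m,k}(s)$).

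There is, however, a genuine gap at the step ``and therefore $(g,\widetilde V_{m,k})_{\mathcal L}=0$''. From $C_{m,k}=0$ you only obtain $F_{m,k}(t)=o(t^{\gamma})$, while the contradiction hypothesis gives $\sqrt{H(t)}=o(t^{\gamma})$ as well; the quotient $F_{m,k}(\lambda^2)/\sqrt{H(\lambda^2)}$ is then an indeterminate $o/o$, so its convergence to a finite limit along a subsequence does not force that limit to be zero. To close the argument you need two additional ingredients that the paper makes explicit. First, the integrability estimate you sketch actually yields a \emph{quantitative} remainder: your Hardy-type bounds together with $H(s)\le K_1 s^{2\gamma}$ give $|R_{m,k}(s)|=O(s^{\gamma-1+\e/4})$, so once $C_{m,k}=0$ one obtains
\[
F_{m,k}(t)=t^{\gamma}\int_0^{t}s^{-\gamma}R_{m,k}(s)\,ds=O\big(t^{\gamma+\e/4}\big),
\]
not merely $o(t^{\gamma})$. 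Second, and crucially, you must invoke the \emph{lower} bound \eqref{eq:53} of Lemma~\ref{stimaH}, namely $H(t)\ge K_2(\sigma)\,t^{2\gamma+\sigma}$ for every $\sigma>0$; choosing $0<\sigma<\e/2$ then gives
\[
\frac{F_{m,k}(\lambda^2)}{\sqrt{H(\lambda^2)}}=O\big(\lambda^{\e/2-\sigma}\big)\to 0,
\]
whence $(g,\widetilde V_{m,k})_{\mathcal L}=0$ and the contradiction. Your outline cites only the upper bound \eqref{eq:52}; without \eqref{eq:53} the proof does not conclude.
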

\begin{pf}
  Let us argue by contradiction and assume that $\lim_{t\to
    0^+}t^{-2\gamma}H(t)=0$. Let us consider the orthonormal basis of ${\mathcal L}$
  introduced in Remark \ref{rem:ortho} and given by $\{\widetilde V_{n,j}:
  j,n\in\N,j\geq 1\}$.  Since
  $\tilde u_{\lambda}(x,1)=\tilde u(\lambda x,\lambda^2)\in{\mathcal L}$ for all
  $\lambda\in (0,\sqrt T_0)$, we can expand $\tilde u_{\lambda}$ as
\begin{equation}\label{eq:70}
  \tilde u_{\lambda}(x,1)=\sum_{\substack{m,k\in\N\\k\geq1}}
  u_{m,k}(\lambda)
  \widetilde V_{m,k}(x)\quad \text{in }{\mathcal L},
\end{equation}
where
\begin{equation}\label{eq:68}
  u_{m,k}(\lambda)=\int_{\R^N}
  \tilde u_{\lambda}(x,1)\overline{\widetilde V_{m,k}(x)}G(x,1)\,dx.
\end{equation}
From \eqref{probtilde} and the fact that $\widetilde V_{m,k}(x)$ is an
eigenfuntion of the operator $L_{\A,a}$ associated to the eigenvalue
$\gamma_{m,k}$ defined in \eqref{eq:65}, we obtain that 
\begin{equation}\label{eq:42}
u'_{m,k}(\lambda)=\frac{2}{\lambda}\gamma_{m,k}u_{m,k}(\lambda)-2\lambda
\xi_{m,k}(\lambda) \quad\text{for all }m,k\in\N,\ k\geq 1,
\end{equation}
a.e. and distributionally in $(0,\sqrt{T_0})$, where 
\begin{equation}\label{eq:69}
  \xi_{m,k}(\lambda)
  =\int_{\R^N}h(\lambda x,-\lambda^2)\tilde 
  u_{\lambda}(x,1)\overline{\widetilde V_{m,k}(x)}G(x,1)\,dx.
\end{equation}
From Lemma \ref{l:blow_up}, $\gamma$ is an eigenvalue of the operator
$L_{\A,a}$, hence, by Proposition \ref{p:explicit_spectrum}, there exist
$m_0,k_0\in\N$, $k_0\geq 1$, such that
$\gamma=\gamma_{m_0,k_0}=m_0-\frac{\alpha_{k_0}}2$.  Let us denote as
$E_0$ the associated eigenspace and by $J_0$ the finite set of indices
$\{(m,k)\in\N\times(\N\setminus\{0\}):\gamma=m-\frac{\alpha_{k}}2\}$,
so that $\#J_0$ is equal to the multiplicity of $\gamma$
and an orthonormal basis of $E_0$ is given by $\{\widetilde V_{m,k} :
(m,k)\in J_0\}$.

In view of \eqref{eq:h}, for all $(m,k)\in J_0$ we
  can estimate $\xi_{m,k}$ as
\begin{gather}\label{eq:54}
  |\xi_{m,k}(\lambda)|\leq
  C_h\int_{\R^N}(1+\lambda^{-2+\e}|x|^{-2+\e})|\tilde u(\lambda x,\lambda^2)|
  |\widetilde V_{m,k}(x)|G(x,1)\,dx\\
  \notag\leq C_h\bigg(\int_{\R^N}|\tilde u(\lambda x,\lambda^2)|^2
  G(x,1)\,dx\bigg)^{\!\!1/2}+ C_h\lambda^{-2+\frac\e2}\int_{|x|\leq
    \lambda^{-1/2}}\frac{|\tilde u(\lambda x,\lambda^2)| |\widetilde
    V_{m,k}(x)|}{|x|^2}G(x,1)\,dx\\
  \notag\qquad+C_h\lambda^{-1+\frac\e2}\int_{|x|\geq
          \lambda^{-1/2}}|\tilde u(\lambda
  x,\lambda^2)|
  |\widetilde V_{m,k}(x)|G(x,1)\,dx\\
  \notag\hskip-0.5cm\leq C_h(1+\lambda^{-1+\frac\e2})\sqrt{H(\lambda^2)}
  +C_h\lambda^{-2+\frac\e2}\bigg(\int_{\R^N}\!\!{\textstyle{\frac{|\tilde u(\lambda
    x,\lambda^2)|^2} {|x|^2}}}G(x,1)\,dx\bigg)^{\!\!\frac12}
  \bigg(\int_{\R^N}\!\!{\textstyle{\frac{|\widetilde V_{m,k}(x)|^2}
  {|x|^2}G(x,1)\,dx}}\bigg)^{\!\!\frac12}.
\end{gather}
Corollary \ref{c:pos_per} and Lemma \ref{l:Nabove} imply that
\begin{multline}\label{eq:55}
  \int_{\R^N}\frac{|\tilde u(\lambda x,\lambda^2)|^2} {|x|^2}G(x,1)\,dx
  =\lambda^2\int_{\R^N}\frac{|\tilde u(y,\lambda^2)|^2} {|y|^2}G(y,\lambda^2)\,dy
  \\\leq \frac{\lambda^2}{C_1}\bigg(D(\lambda^2)+
  \frac{C_2}{\lambda^2}H(\lambda^2)\bigg)=
  \frac{H(\lambda^2)}{C_1}\big({\mathcal N}(\lambda^2)+C_2\big)
=O(H(\lambda^2))
\end{multline}
as $\lambda\to0^+$. On the other hand, from Lemma \ref{Hardy_aniso}
it follows that, for all $(m,k)\in J_0$,
\begin{equation}\label{eq:56}
\int_{\R^N}\frac{|\widetilde V_{m,k}(x)|^2}
  {|x|^2}G(x,1)\,dx\leq
\bigg(\mu_1(\A,a)+\frac{(N-2)^2}4\bigg)^{\!\!-1}\bigg(\gamma+\frac{N-2}4
\bigg).
\end{equation}
Combining (\ref{eq:54}), (\ref{eq:55}), (\ref{eq:56}),  and Lemma \ref{stimaH},
we obtain that
\begin{align}\label{eq:57}
|\xi_{m,k}(\lambda)|\leq
  O(\lambda^{-2+\frac{\e}{2}+2\gamma})\quad\text{as }\lambda\to0^+,
\end{align}
for all $(m,k)\in
J_0$.

In order to prove the theorem, we argue by contradiction and assume that $\lim_{t\to
    0^+}t^{-2\gamma}H(t)=0$. By orthogonality of the $\widetilde V_{m,k}$'s in ${\mathcal L}$,
we have that
$$
H(\lambda^2)=\sum_{\substack{n,j\in\N\\j\geq1}} (u_{n,j}(\lambda) )^2
\geq (u_{m,k}(\lambda) )^2\quad\text{for all }
\lambda\in(0,\sqrt{T_0})\text{ and }m,k\in\N,\ k\geq1.
$$
Hence,  $\lim_{t\to
    0^+}t^{-2\gamma}H(t)=0$ implies that
\begin{equation}\label{eq:58}
\lim_{\lambda\to 0^+}\lambda^{-2\gamma}u_{m,k}(\lambda)=0
\quad\text{for all }
m,k\in\N,\ k\geq1.
\end{equation}
Integration of \eqref{eq:42} over $(\rho,\lambda)$ yields that, for all
$\lambda,\rho\in(0,\sqrt{T_0})$,
\begin{equation}\label{eq:59}
u_{m,k}(\rho)=
\rho^{2\gamma_{m,k}}\left(\lambda^{-2\gamma_{m,k}} u_{m,k}(\lambda)+
2\int_{\rho}^{\lambda}s^{1-2\gamma_{m,k}}\xi_{m,k}(s)\,ds\right).
\end{equation}

Estimate \eqref{eq:57} implies that, for  all $(m,k)\in
J_0$, the function $s\mapsto
s^{1-2\gamma}\xi_{m,k}(s)$ belongs to $L^1(0,\sqrt{T_0})$. Letting
$\rho\to 0^+$ in (\ref{eq:59}) and using (\ref{eq:58}), we obtain that
\begin{equation}\label{eq:60}
u_{m,k}(\lambda)=
-2\lambda^{2\gamma}\int_0^\lambda s^{1-2\gamma}\xi_{m,k}(s)\,ds,
\quad\text{for all
$\lambda\in(0,\sqrt{T_0})$.}
\end{equation}
From (\ref{eq:57}) and (\ref{eq:60}), we  deduce that, for all
$(m,k)\in J_0$,
\begin{equation}\label{eq:61}
u_{m,k}(\lambda)=O(\lambda^{2\gamma+\frac\e2})\quad\text{as }\lambda\to0^+.
\end{equation}
Let us fix $0<\sigma<\frac\e2$. By Lemma \ref{stimaH},
there exists $K_2(\sigma)$ such that
$H(\lambda^2)\geq K_2(\sigma)\lambda^{2(2\gamma+\sigma)}$ for  all 
$\lambda\in(0,\sqrt{T_0})$.
Therefore, (\ref{eq:61}) implies that, for all
$(m,k)\in J_0$,
\begin{equation}\label{eq:48}
\frac{u_{m,k}(\lambda)}{\sqrt{H(\lambda^2)}}=O(\lambda^{\frac\e2-\sigma})=o(1)
\quad\text{as }\lambda\to0^+.
\end{equation}
On the other hand, by Lemma \ref{l:blow_up}, for every sequence
$\lambda_n\to 0^+$, there exists a subsequence
$\{\lambda_{n_j}\}_{j\in\N}$ such
that
\begin{equation*}
\frac {\tilde u_{\lambda_{n_j}}(x,1)}{\sqrt{H(\lambda_{n_j}^2)}}
\to g\quad \text{in }{\mathcal L}\quad\text{as }j\to+\infty,
\end{equation*}
for some  eigenfunction $g\in
E_0\setminus\{0\}$.
Therefore, for all $(m,k)\in J_0$,
\begin{equation}\label{eq:63}
\frac{u_{m,k}(\lambda_{n_j})}{\sqrt{H(\lambda_{n_j}^2)}}
=\left(\frac {\tilde u_{\lambda_{n_j}}(x,1)}{\sqrt{H(\lambda_{n_j}^2)}},
\widetilde V_{m,k}\right)_{\mathcal L}\to(g,
\widetilde V_{m,k})_{\mathcal L}\quad\text{as }j\to+\infty.
\end{equation}
From (\ref{eq:48}) and (\ref{eq:63}) we deduce that
 $(g,\widetilde V_{m,k})_{\mathcal L}=0$ for all $(m,k)\in J_0$.
Since $g\in E_0$ and $\{\widetilde V_{m,k} : (m,k)\in J_0\}$
is  an orthonormal basis of $E_0$, this implies that $g=0$,
a contradiction.
\end{pf}

\begin{pfn}{Theorem \ref{asym1}}
As already observed, up to a translation it is not restrictive to
assume that $t_0=0$; then the analysis performed in this section
applies to the function $\tilde u(x,t)=u(x,-t)$. In particular
(\ref{eq:671}) follows from Lemma \ref{l:limit}, part (i) of Lemma
\ref{l:blow_up} and Proposition \ref{p:explicit_spectrum}, i.e. 
  there exists an eigenvalue
  $\gamma_{m_0,k_0}=m_0-\frac{\alpha_{k_0}}2$ of $L_{\A,a}$, $m_0,k_0\in\N$,
  $k_0\geq 1$,  such that $\gamma=\lim_{t\to
    0^+}{\mathcal N}(t)=\lim_{t\to
    0^-}\widetilde {\mathcal N}(t)=\gamma_{m_0,k_0}$.  Let $E_0$ be the associated
  eigenspace and $J_0$ the finite set of indices
  $\{(m,k)\in\N\times(\N\setminus\{0\}):\gamma_{m_0,k_0}=m-\frac{\alpha_{k}}2\}$,
  so that $\{\widetilde V_{m,k} : (m,k)\in J_0\}$   is an
  orthonormal basis of $E_0$. Let $\{\lambda_n\}_{n\in\N}\subset
  (0,+\infty)$ such that $\lim_{n\to+\infty}\lambda_n=0$. Then, from
  part (ii) of Lemma \ref{l:blow_up} and Lemmas \ref{l:limite} and
  \ref{limite_pos}, there exist a subsequence
  $\{\lambda_{n_k}\}_{k\in\N}$ and complex numbers
  $\{\beta_{n,j}:(n,j)\in J_0\}$ such that
  $\beta_{n,j}\neq 0$ for some $(n,j)\in J_0$ and, for any $\tau\in(0,1)$,
\begin{equation}\label{eq:74}
\lim_{k\to+\infty}\int_\tau^1
\bigg\|\lambda_{n_k}^{-2\gamma}\tilde u(\lambda_{n_k} x,\lambda_{n_k}^2t)
-t^{\gamma}\sum_{(n,j)\in J_0}\beta_{n,j}\widetilde V_{n,j}(x/\sqrt t)
\bigg\|_{{\mathcal H}_t}^2dt=0
\end{equation}
and
\begin{equation}\label{eq:75}
\lim_{k\to+\infty}\sup_{t\in[\tau,1]}
\bigg\|\lambda_{n_k}^{-2\gamma}\tilde u(\lambda_{n_k} x,\lambda_{n_k}^2t)
-t^{\gamma}\sum_{(n,j)\in J_0}\beta_{n,j}\widetilde V_{n,j}(x/\sqrt t)
\bigg\|_{{\mathcal L}_t}=0.
\end{equation}
In particular,
\begin{equation}\label{eq:71}
  \lambda_{n_k}^{-2\gamma}\tilde u(\lambda_{n_k} x,\lambda_{n_k}^2)
  \mathop{\longrightarrow}\limits_{k\to+\infty}
  \sum_{(n,j)\in J_0}\beta_{n,j}\widetilde V_{n,j}(x)
  \quad\text{in }{\mathcal L}.
\end{equation}
Let us fix $\Lambda\in(0,\sqrt{T_0})$ and define  $u_{m,j}$ and $\xi_{m,j}$
as in (\ref{eq:68}--\ref{eq:69}).  From
(\ref{eq:71}) and orthogonality of the $\widetilde V_{n,j}$'s
it follows that, for any $(m,j)\in J_0$, $\lim_{k\to+\infty}
\lambda_{n_k}^{-2\gamma}u_{m,j}(\lambda_{n_k})
=\beta_{m,j}$.
Therefore from (\ref{eq:59}) we have  that, for every $(m,j)\in J_0$,
\begin{align*}
  \beta_{m,j}&=\Lambda^{-2\gamma} u_{m,j}(\Lambda)+
  2\int_{0}^{\Lambda}s^{1-2\gamma}\xi_{m,j}(s)\,ds\\
  &=\Lambda^{-2\gamma}
  \int_{\R^N}\tilde u(\Lambda x,\Lambda^2)\overline{\widetilde V_{m,j}(x)}G(x,1)\,dx\\
  &\hskip3cm+2\int_0^{\Lambda}s^{1-2\gamma} \bigg( \int_{\R^N}h(s
  x,-s^2) \tilde u(sx,s^2)\overline{\widetilde V_{m,j}(x)}G(x,1)\,dx \bigg) ds.
\end{align*}
The above formula shows that the $\beta_{m,j}$'s depend neither on the sequence
$\{\lambda_n\}_{n\in\N}$ nor on its subsequence
$\{\lambda_{n_k}\}_{k\in\N}$. Then we can conclude that the convergences in
(\ref{eq:74}) and (\ref{eq:75}) actually hold as $\lambda\to 0^+$.
 The proof is thereby complete.
\end{pfn}

\begin{pfn}{Corollary \ref{cor:uniq_cont}}
 Let us argue by contradiction and assume that $u\not\equiv 0$ in $\R^N\times
  (t_0-T,t_0)$. Let $k\in\N$ be such that $k>\gamma_{m_0,k_0}$, being
  $\gamma_{m_0,k_0}$ as in Theorem \ref{asym1}. 
From (\ref{eq:uniq_cont}) it follows that,
  for a.e. $(x,t)\in\R^N\times (0,1)$,
\begin{equation*}
\lim_{\lambda\to 0^+}\lambda^{-2\gamma_{m_0,k_0}}t^{-\gamma_{m_0,k_0}}u(\lambda x,t_0-\lambda^2 t)=0.
\end{equation*}
On the other hand, Theorem \ref{asym1} implies that, 
for all $t\in(0,1)$ and a.e. $x\in\R^N$,
\[
\lim_{\lambda\to 0^+}\lambda^{-2\gamma_{m_0,k_0}}t^{-\gamma_{m_0,k_0}}u(\lambda x,t_0-\lambda^2 t)
= g(x/\sqrt t),
\]
for some  $g\in{\mathcal H}\setminus\{0\}$ eigenfunction of
$L_{\A,a}$ associated to the eigenvalue $\gamma_{m_0,k_0}$, thus
giving rise to a contradiction.
\end{pfn}

\section{Proof of Theorem \ref{t:representation}: representation
  formula of solutions}\label{sec:repr-form-solut}

Let $u$
be a weak solution to (\ref{prob}) with $h\equiv0$. By saying that $u$
is a weak solution to (\ref{prob}) we mean that the 
the function  $\varphi$ defined as 
\begin{equation}  \label{varphi}
\varphi(x,t)= u\big(\sqrt{1+t}\, x,t\big), \quad t\geq 0
\end{equation}
is a weak solution of the following equation (which is equivalent to
(\ref{prob}) up to the change of variable $x\mapsto \sqrt{1+t}\, x$):
\begin{equation}\label{varphieq}
  \dfrac{d \varphi}{d t}(x,t)= \dfrac{1}{(1+t)} \bigg({\mathcal{-L}}_{{
      \mathbf{A}},a}\varphi(x,t)+\frac{1}{2} \nabla \varphi(x,t)\cdot x\bigg),
\end{equation}
in the sense that 
\begin{equation}\label{eq:67}
\varphi\in L^2_{\rm loc}([0,+\infty),\tilde{\mathcal H} )\quad\text{and}\quad
\varphi_t\in L^2_{\rm loc}([0,+\infty),\tilde{\mathcal H}^\star ),
\end{equation}
and 
\begin{equation*}
{\phantom{\big\langle}}_{\tilde{\mathcal
    H}^\star}\big\langle
\varphi_t,w
\big\rangle_{\tilde{\mathcal H}}=-\frac1{t+1}
\int_{\R^N}\bigg(\nabla_\A \varphi(x,t)\cdot \overline{\nabla_\A w(x)}-
\dfrac{a(x/|x|)}{|x|^2}\,\varphi(x,t) \overline{w(x)}\bigg)e^{\frac{|x|^2}4}\,dx
\end{equation*}
for all $w\in \tilde{\mathcal H}$, where $\tilde{\mathcal H}$ is
defined in \eqref{eq:62}. From \eqref{eq:67} it follows
that $\varphi\in C^0([0,+\infty),\tilde{\mathcal L} )$.
Furthermore, \begin{equation*}
  \varphi (x, 0)= u(x,0)=u_0(x).
\end{equation*}
A representation formula for solutions $u$ to (\ref{prob}) with
$h\equiv 0$ can be found 
(in the spirit of \cite{FFFP})
by
expanding the transformed solution $\varphi$ to (\ref{varphieq}) in Fourier
series with respect to an orthonormal basis of $\tilde{\mathcal{L}}$
consisting of eigenfunctions of an Ornstein-Uhlenbeck type 
operator perturbed with singular homogeneous electromagnetic
potentials, i.e. of the operator $L^{-}_{{\mathbf{A}},a}$ defined in
\eqref{eq:Lmeno} and acting as 
\begin{equation}  \label{operator2}
{}_{\tilde{{\mathcal{H}}}^\star}\langle L_{{\mathbf{A}},a}^{-}\varphi,w \rangle_{{\tilde{\mathcal{H}}}}
\\
= \int_{{\mathbb{R}}^N}\bigg(\nabla_{{\mathbf{A}}} \varphi(x)\cdot\overline{%
\nabla_{{\mathbf{A}}} w(x)}-\frac{a(\frac{x}{|x|})}{|x|^2}\,\varphi(x)\overline{%
w(x)} \bigg)e^{\frac{|x|^2}{4}}\,dx,
\end{equation}
for all $\varphi,w\in{\tilde{\mathcal{H}}}$, 
where $\tilde{{\mathcal{H}}}^\star$ denotes the dual
space of $\tilde{{\mathcal{H}}}$ and ${}_{\tilde{{\mathcal{H}}}^\star}\langle
\cdot,\cdot\rangle_{\tilde{{\mathcal{H}}}}$ is the corresponding duality product.

Let us expand the initial datum $u_{0}=u(\cdot
,0)=\varphi (\cdot ,0)$  in Fourier series with
respect to the orthonormal basis of $\tilde {\mathcal{L}}$
introduced in Remark \ref{rem:tilde} as
\begin{equation}
u_{0}=\sum\limits_{\substack{ m,k\in {\mathbb{N}} \\ k\geq 1}}c_{m,k}%
\widetilde{U}_{m,k}\quad \text{in }\tilde{\mathcal{L}},\quad \text{where
}c_{m,k}=\int_{{\mathbb{R}}^{N}}u_{0}(x)\overline{\widetilde{U}_{m,k}(x)}%
e^{\frac{|x|^2}{4}}\,dx,  \label{datoinicial}
\end{equation}
and, for $t\geq 0$, the function $\varphi(\cdot ,t)$ defined
in \eqref{varphi} as
\begin{equation}
\varphi(\cdot ,t)=\sum\limits_{\substack{ m,k\in {\mathbb{N}} \\ k\geq 1}}%
\varphi _{m,k}(t)\widetilde{U}_{m,k}\quad \text{in }\tilde{\mathcal L},
\label{eq:exp_varphi}
\end{equation}%
where
\begin{equation*}
\varphi _{m,k}(t)=\int_{{\mathbb{R}}^{N}}\varphi (x,t)\overline{\widetilde{U}%
_{m,k}(x)}e^{\frac{|x|^2}{4}}\,dx.
\end{equation*}%
Since $\varphi(x,t)$ satisfies \eqref{varphieq}, by Remark
\ref{rem:tilde} we obtain that
$\varphi _{m,k}\in C^{1}([0,+\infty),{\mathbb{C}})$ and
\begin{equation*}
\varphi _{m,k}^{\prime }(t)=-\dfrac{\tilde\gamma _{m,k}}{1+t}\, \varphi
_{m,k}(t),\quad \varphi _{m,k}(0)=c_{m,k},
\end{equation*}%
with $\tilde \gamma_{m,k}=\frac N2+\gamma_{m,k}=
 \frac{N}{2}-\frac{\alpha_{k}}{2}+m$. Integration yields $\varphi
 _{m,k}(t)=c_{m,k}(1+t)^{-\tilde \gamma
_{m,k}}$. Hence expansion (\ref{eq:exp_varphi}) can be rewritten as
\begin{equation*}
\varphi (z,t)=\sum\limits_{\substack{ m,k\in {\mathbb{N}} \\ k\geq 1}}%
c_{m,k}(1+t)^{-\tilde \gamma _{m,k}}\widetilde{U}_{m,k}(z)\quad \text{in }%
\tilde{\mathcal L},\quad \text{for all }t\geq0.
\end{equation*}%
We notice that
$\tilde\gamma_{m,k}\geq0$ for all $m,k$, then, in view of the Parseval identity, for all $t\geq0$,
\begin{equation}\label{eq:64}
\|\varphi(\cdot,t)\|_{\tilde{\mathcal L}}^2=
\sum\limits_{\substack{ m,k\in {\mathbb{N}} \\ k\geq 1}}
c_{m,k}^2(1+t)^{-2\tilde \gamma _{m,k}}
\leq
\sum\limits_{\substack{ m,k\in {\mathbb{N}} \\ k\geq 1}}
c_{m,k}^2=\|u_0\|_{\tilde{\mathcal L}}^2.
\end{equation}
In view of \eqref{datoinicial}, the above series can be written as
\begin{equation*}
\varphi(z,t)=\sum\limits_{\substack{ m,k\in {\mathbb{N}} \\ k\geq 1}}%
(1+t)^{-\tilde\gamma _{m,k}}\bigg(\int_{{\mathbb{R}}^{N}}e^{\frac{|y|^2}{4}}u_{0}(y)\overline{%
\widetilde{U}_{m,k}(y)}\,dy\bigg)\widetilde{U}_{m,k}(z),
\end{equation*}%
in the sense that, for all $t\geq 0$, the above series converges
in $\tilde {\mathcal{L}}$. Since $u_{0}(y)$ can be expanded~as
\begin{equation*}
u_{0}(y)=u_{0}\big(|y|\,\tfrac{y}{|y|}\big)=\sum_{j=1}^{\infty
}u_{0,j}(|y|)\psi _{j}\big(\tfrac{y}{|y|}\big)\quad \text{in }L^{2}({\mathbb{%
S}}^{N-1}),
\end{equation*}%
where $u_{0,j}(|y|)=\int_{{\mathbb{S}}^{N-1}}u_{0}(|y|\theta )\overline{\psi
_{j}(\theta )}\,dS(\theta )$, we conclude that
\begin{align*}
\varphi (z,t)& =\sum\limits_{\substack{ m,k\in {\mathbb{N}} \\ k\geq 1}}%
\frac{(1+t)^{-\tilde\gamma _{m,k}}}{\Vert U_{m,k}\Vert
  _{\tilde{\mathcal L}}^{2}}%
U_{m,k}(z)\bigg(\int_{0}^{\infty }\!\!u_{0,k}(r)r^{N-1-\alpha _{k}}P_{k,m}(%
\tfrac{r^{2}}{4})\,dr\bigg) \\
& =\sum\limits_{k=1}^{\infty }\psi _{k}\big(\tfrac{z}{|z|}\big)\frac{
(1+t)^{\frac{\alpha_k}{2}-\frac{N}{2}}}{2^{1+2\beta _{k}}\Gamma (1+\beta _{k})}\!\left[
\sum\limits_{m=0}^{\infty }\frac{{\textstyle{\binom{m+\beta _{k}}{m}}}}{%
(1+t)^m}\times \right.  \\
& \ \ \ \left. \times \bigg(\int_{0}^{\infty }\!\frac{u_{0,k}(r)}{%
|rz|^{\alpha _{k}}}P_{k,m}\big(\tfrac{r^{2}}{4}\big)P_{k,m}\big(\tfrac{%
|z|^{2}}{4}\big)e^{-\frac{|z|^{2}}{4}}r^{N-1}dr\!\bigg)\!\right] \!.
\end{align*}%
By \cite{AS} we know that
\begin{equation*}
P_{k,m}\Big(\frac{r^{2}}{4}\Big)=\dfrac{\Gamma (1+\beta _{k})}{\Gamma
(1+\beta _{k}+m)}e^{\frac{r^{2}}{4}}r^{-\beta _{k}}2^{\beta _{k}}%
\displaystyle\int_{0}^{\infty }e^{-t}t^{m+\frac{\beta _{k}}{2}}J_{\beta
_{k}}(r\sqrt{t})\,dt,
\end{equation*}%
where $J_{\beta _{k}}$ is the Bessel function of the first kind of order $%
\beta _{k}$. Therefore,
\begin{align*}
\varphi (z,t)& =2\sum\limits_{k=1}^{\infty }\psi _{k}\big(\tfrac{z}{|z|}\big)%
(1+t)^{\frac{\alpha_k}{2}-\frac{N}{2} }  \Bigg[
\sum\limits_{m=0}^{\infty }\frac{(1+t)^{-m}}{
\Gamma (1+\beta _{k}+m)\Gamma (1+m)}\times  \\
& \ \ \ \times \!\bigg(\!\int_{0}^{\infty }\frac{u_{0,k}(r)}{|rz|^{\alpha
_{k}+\beta _{k}}}e^{\frac{r^{2}}{4}}\left( \int_{0}^{\infty
}\!\!\!\int_{0}^{\infty }\!\!e^{-s^{2}-{s^{\prime }}^{2}}\!(ss^{\prime})^{
2m+\beta _{k}+1}\times \right.  \\
& \ \ \ \left. \times J_{\beta _{k}}(rs)J_{\beta _{k}}(%
|z|s^{\prime })\,ds\,ds^{\prime }\right) \!r^{N-1}dr\!\bigg)\Bigg] \\
& =2\sum\limits_{k=1}^{\infty }\psi _{k}\big(\tfrac{z}{|z|}\big)
(1+t)^{\frac{\alpha_k}{2}-\frac{N}{2}}\!\Bigg[\int_{0}^{\infty }u_{0,k}(r)|rz|^{-\alpha
_{k}-\beta _{k}}e^{\frac{r^{2}}{4}}r^{N-1}\times  \\
& \ \ \ \times \bigg(%
\int_{0}^{\infty }\!\!\!\int_{0}^{\infty }\!\!\!\frac{ss^{\prime }}{%
e^{s^{2}+s^{\prime 2}}}\times  \\
& \ \ \ \times \bigg(\sum\limits_{m=0}^{\infty }\frac{(1+t)^{-m}}{\Gamma (1+m)\Gamma (1+\beta _{k}+m)}%
(ss^{\prime})^{ 2m+\beta _{k}}\bigg)\times  
  \ J_{\beta _{k}}(rs)J_{\beta _{k}}(|z|s^{\prime
})ds\,ds^{\prime }\!\bigg)dr\Bigg]\\
& =2\sum\limits_{k=1}^{\infty }i^{-\beta_k}\psi _{k}\big(\tfrac{z}{|z|}\big)
(1+t)^{\frac{\alpha_k}{2}-\frac{N}{2}+\frac{\beta_k}{2}}\!\Bigg[\int_{0}^{\infty }u_{0,k}(r)|rz|^{-\alpha
_{k}-\beta _{k}}e^{\frac{r^{2}}{4}}r^{N-1}\times  \\
& \ \ \ \times \bigg(%
\int_{0}^{\infty }\!\!\!\int_{0}^{\infty }\!\!\!\frac{ss^{\prime }}{%
e^{s^{2}+s^{\prime 2}}} J_{\beta _{k}}\Big(\frac{2iss'}{\sqrt{1+t}}\Big)
  \ J_{\beta _{k}}(rs)J_{\beta _{k}}(|z|s^{\prime
})ds\,ds^{\prime }\!\bigg)dr\Bigg],
\end{align*}%
where in the last line we have used that $$ (1+t)^{-m} (ss')^{2m+\beta_k}= (1+t)^{\frac{\beta_k}{2}} i^{-\beta_k} (-1)^m \Big( \dfrac{iss'}{\sqrt{1+t}}\Big)^{2m+\beta_k}, $$
and  $$\sum\limits_{m=0}^{\infty } \frac{(-1)^m}{\Gamma (1+m)\Gamma (1+\beta
  _{k}+m)}\Big(\frac{iss^{\prime}}{\sqrt {1+t}}\Big)^{2m+\beta _{k}}=J_{\beta _{k}}\Big(\frac{2iss^{\prime }}{\sqrt{1+t}}\Big).$$  
Then we have that 
\begin{equation}
\varphi (z,t)  \label{eq:2} 
=2\sum\limits_{k=1}^{\infty }   \dfrac{ e^{-i\frac{\pi}{2} \beta_k}\psi _{k}\big(\tfrac{z}{|z|}\big) }{(1+t)^{\frac{N}{4}+\frac{1}{2}}}
\!\Bigg[\int_{0}^{\infty }u_{0,k}(r)|rz|^{-\frac{%
N-2}{2}}e^{\frac{r^{2}}{4}}{\mathcal{I}}_{k,t}(r,|z|)r^{N-1}dr\Bigg],
\end{equation}%
where
\begin{equation*}
{\mathcal{I}}_{k,t}(r,|z|)=\int_{0}^{\infty }\!\!\!\int_{0}^{\infty
}\!\!ss^{\prime} e^{-s^{2}-s^{\prime 2}}J_{\beta _{k}}\Big(\frac{2iss^{\prime }}{\sqrt{1+t}}\Big) J_{\beta _{k}}(rs)J_{\beta _{k}}(|z|s^{\prime })\,ds\,ds^{\prime }.
\end{equation*}%
From \cite[formula (1), p. 395]{watson} (with $t=s^{\prime }$, $p=1$, $a=%
\frac{2is}{\sqrt{1+t}}$, $b=|z|$, $\nu =\beta _{k}$
which satisfy $\Re (\nu )>-1$ and $|\mathop{\rm arg}p|<\frac{\pi }{4}$), we
know that
\begin{equation*}
  \int_{0}^{\infty }\!\!s^{\prime} e^{-s^{\prime 2}} J_{\beta _{k}}\Big(\frac{2iss^{\prime }}{\sqrt{1+t}}\Big)  J_{\beta _{k}}(
  |z|s^{\prime })\,ds^{\prime } =\frac{1}{2}e^{-\frac{1}{4} (|z|^2-\frac{4s^2}{1+t})} I_{\beta
    _{k}}\bigg(\frac{i|z|s}{\sqrt{1+t}}\bigg),
\end{equation*}
where $I_{\beta _{k}}$ denotes the modified Bessel function of order $\beta
_{k}$. Hence
\begin{align*}
 {\mathcal{I}}_{k,t}(r,|z|) 
&  =\dfrac{1}{2}\displaystyle\int_{0}^{\infty }se^{-s^{2}}J_{\beta _{k}}(%
rs)e^{-\frac{|z|^{2}}{4} }e^{\frac{s^2}{1+t}} I_{\beta
_{k}}\Big(\frac{i|z|s}{\sqrt{1+t}}\Big)\,ds \\
& =\dfrac{1}{2} e^{-i \beta_k \frac{\pi}{2}} e^{\frac{-|z|^2}{4}}\displaystyle\int_{0}^{\infty }se^{-s^{2}}J_{\beta _{k}}(%
rs)e^{\frac{s^2}{1+t}} J_{\beta
_{k}}\Big(\frac{-|z|s}{\sqrt{1+t}}\Big)\,ds ,
\end{align*}%
since $I_{\nu }(x)=e^{-\frac{1}{2}\nu \pi i}J_{\nu }(xe^{\frac{\pi }{2}i})$
(see e.g. \cite[9.6.3, p. 375]{AS}). We obtain
\begin{equation*}
{\mathcal{I}}_{k,t}(r,|z|)=\dfrac{1}{2} e^{-i \beta_k \frac{\pi}{2}} e^{\frac{-|z|^2}{4}}\displaystyle\int_{0}^{\infty }s e^{-s^2\frac{t}{1+t} }  J_{\beta _{k}}(%
rs)J_{\beta
_{k}}\Big(\frac{-|z|s}{\sqrt{1+t}}\Big)\,ds.
\end{equation*}%
Applying \cite[formula (1), p. 395]{watson} (with $t=s$, $p^{2}=\frac {t}{t+1}$, $a=r$, $b= -\frac{|z|}{\sqrt{1+t}}$, $\nu =\beta
_{k}$ which satisfy $\Re (\nu )>-1$ and $|\mathop{\rm arg}p|<\frac{\pi }{4}$%
) and \cite[9.6.3, p. 375]{AS}, we obtain
\begin{align} \label{eq:calI}
 {\mathcal{I}}_{k,t}(r,|z|) & 
=\dfrac{1}{4}e^{-\frac{\beta _{k}}{2}\pi i} e^{-\frac{|z|^2}{4}} 
  e^{-\frac{r^2 (1+t) +|z|^2}{4t}} \frac{(t+1)}{t} I_{\beta _{k}}\bigg(\dfrac{-r|z| \sqrt{1+t}%
}{2t}\bigg)  \notag \\
&= \dfrac{1}{4}e^{i\beta _{k}\pi } e^{-\frac{|z|^2}{4}} 
  e^{-\frac{r^2 (1+t) +|z|^2}{4t}} \frac{(t+1)}{t} J_{\beta _{k}}\bigg(\dfrac{-ir|z| \sqrt{1+t}%
}{2t}\bigg) .  \notag
\end{align}%
From \eqref{eq:2} and the above identity we deduce
\begin{equation*}
\varphi (z,t) 
=\frac{1}{2}\sum\limits_{k=1}^{\infty
}\dfrac{\psi _{k}\big(\tfrac{z}{|z|}\big)e^{i \frac\pi2\beta _{k}}}{ t(1+t)^{\frac{N}{4}-\frac{1}{2}}}\!\Bigg[%
\int_{0}^{\infty }\frac{u_{0,k}(r)}{|rz|^{\frac{N-2}{2}}}e^ {-\frac{|z|^2}{4t}-\frac{|z|^2}{4} -\frac{r^2}{4t}}   J_{\beta _{k}}\bigg(  \dfrac{-ir|z| \sqrt{1+t}
}{2t}\bigg)r^{N-1}\,dr\Bigg] .
\end{equation*}%
Notice that, by replacing $\int_{0}^{\infty }$ with $\int_{0}^{R}$ we
obtain the series representation of the solution $\varphi_{R}(z,t)$
with initial datum $u_{0,R}(x)\equiv \chi _{R}(x)u_{0}(x)$, being $\chi _{R}(x)$
the characteristic function of the ball $B_R$ of radius $R$ centered at the
origin. 
Since $u_{0,R} \to u_0$ in $\tilde{\mathcal L}$ as $R\to+\infty$, 
from \eqref{eq:64} it follows that $\varphi(\cdot,t)=\lim_{R\rightarrow \infty }\varphi_{R}(\cdot,t)$ in
$\tilde{\mathcal L}$. Recalling the definition of $u_{0,k}$ 
and observing that the queue of the series
\begin{equation*}
\sum\limits_{k=1}^{\infty
}\psi _{k}\big(\tfrac{z}{|z|}\big)\overline{\psi _{k}\big(\tfrac{y}{|y|}\big)}
e^{i \frac\pi2\beta _{k}}\frac{u_{0}(y)}{(|y||z|)^{\frac{N-2}{2}}}e^
{-\frac{|y|^2+|z|^2(1+t)}{4t}} 
  J_{\beta _{k}}\bigg(  \dfrac{-i|y||z| \sqrt{1+t}
}{2t}\bigg)
\end{equation*}
is uniformly convergent on compact sets (see \cite[Lemma 1.2]{FFFP}), we can exchange integral and sum and
write
\begin{multline*}
\varphi_R (z,t)  
\\=\frac{1}{2
  t(1+t)^{\frac{N}{4}-\frac{1}{2}}}\int_{B_R}\frac{u_0(y)}{(|y||z|)^{\frac{N-2}{2}}}
e^ {-\frac{|z|^2(1+t)+|y|^2}{4t}}\bigg[
\sum\limits_{k=1}^{\infty
}e^{i \frac\pi2\beta _{k}}\psi _{k}\big(\tfrac{z}{|z|}\big) \overline{\psi _{k}\big(\tfrac{y}{|y|}\big)}J_{\beta _{k}}\bigg(  \dfrac{-i|y||z| \sqrt{1+t}
}{2t}\bigg)\bigg]\,dy,
\end{multline*}
i.e.
\[
\varphi_R (z,t)  
=t^{-\frac N2}\int_{B_R}u_0(y)K\left(\tfrac{y}{\sqrt t},\tfrac{z\sqrt{1+t}}{\sqrt
    t}\right)\,dy
\]
where $K(y,z)$ is the kernel defined in \eqref{eq:3}.
Letting $R\to+\infty$ we obtain that 
\[
\varphi (z,t)  
=t^{-\frac N2}\int_{\R^N}u_0(y)K\left(\tfrac{y}{\sqrt t},\tfrac{z\sqrt{1+t}}{\sqrt
    t}\right)\,dy,
\]
where  the integral at the right hand side is understood in the sense
of  improper multiple integrals.
From \eqref{varphi} we conclude that, for all $t> 0$,
\[
  u(x,t)=t^{-\frac{N}{2}}
  \int_{\R^N}u_{0}(y)  K\bigg(\frac{y}{\sqrt{t}},\frac{x}{\sqrt{t}}\bigg) \,dy .
  \]


\begin{thebibliography}{ABCD}



\bibitem{AS}
M. Abramowitz, I. A. Stegun, 
   \textit{Handbook of mathematical functions with formulas, graphs, and
   mathematical tables.} National Bureau of Standards Applied
 Mathematics Series {\bf 55}. For sale by the Superintendent of Documents,
 U.S. Government Printing Office, Washington, D.C. 1964.


\bibitem{AV} G. Alessandrini, S. Vessella, {\it Remark on the strong
   unique continuation property for parabolic operators},
  Proc. Amer. Math. Soc.,  132 (2004), no. 2, 499--501.

\bibitem{almgren} F. J. Jr. Almgren, {\it Dirichlet's problem for
   multiple valued functions and the regularity of mass minimizing
    integral currents}, in ``Minimal submanifolds and geodesics''
    (Proc. Japan-United States Sem., Tokyo, 1977), pp. 1--6,
    North-Holland, Amsterdam-New York, 1979.
   

\bibitem{cazacu} C. Cazacu, D. Krej\v ci\v r\'\i, {\it 
The Hardy inequality and the heat equation with magnetic field in any
dimension},
 Comm. Partial Differential Equations 41 (2016), no. 7, 1056--1088. 

\bibitem{CH} X. Y. Chen, {\it A strong unique continuation theorem for
    parabolic equations}, Math. Ann., 311 (1998), no. 4, 603--630.

\bibitem{Djrbashian} A. E. 
Djrbashian, {\it Integral representations of solutions of the heat equation},
Izv. Nats. Akad. Nauk Armenii Mat., 37 (2002), no. 5, 3--11;
translation in 
J. Contemp. Math. Anal., 37 (2002), no. 5, 14--22 (2003).


\bibitem{E} L. Escauriaza, {\it Carleman inequalities and the heat
   operator}, Duke Math. J.,  104 (2000), no. 1, 113--127.

 \bibitem {EF} L. Escauriaza, F.J. Fern\'andez, {\it Unique
   continuation for parabolic operators}, Ark. Mat.,  41 (2003),
 no. 1, 35--60.


\bibitem{EFV} L. Escauriaza, F. J.  Fern\'andez, S.  Vessella, {\it
    Doubling properties of caloric functions},  Appl. Anal.,  85 (2006),
    no. 1-3, 205--223.

\bibitem {EKPV} L. Escauriaza, C. E. Kenig, G. Ponce, L. Vega, {\it
   Decay at infinity of caloric functions within characteristic
   hyperplanes}, Math. Res. Lett.,  13 (2006), no. 2-3, 441--453.

\bibitem{EV} L. Escauriaza, L. Vega, {\it Carleman inequalities and
   the heat operator. II}, Indiana Univ. Math. J., 50 (2001), no. 3,
 1149--1169.

\bibitem{ES} M. Escobedo, O. Kavian, {\it Variational problems
    related to self-similar solutions of the heat equation},
  Nonlinear Anal., 11 (1987), no. 10, 1103--1133.


\bibitem{FFFP} L. Fanelli, V. Felli, M. A. Fontelos, A. Primo, {\it 
Time decay of scaling critical electromagnetic Schr\"odinger flows},
 Comm. Math. Phys., 324 (2013), no. 3, 1033--1067. 

\bibitem{FFT} V. Felli, A. Ferrero, S. Terracini, {\it Asymptotic
    behavior of solutions to Schr\"odinger equations near an isolated
    singularity of the electromagnetic potential}, Journal of the
    European Mathematical Society,  13 (2011), n. 1, 119--174.

\bibitem {FE} F.J. Fern\'andez, {\it Unique continuation for parabolic
   operators. II}, Comm. Partial Differential Equations 28 (2003),
 no. 9-10, 1597--1604.

\bibitem{FP}  V. Felli, A. Primo, {\it Classification of local asymptotics for solutions to heat equations with inverse-square potentials}, Discrete Contin. Dyn. Syst. 31 (2011), no. 1, 65--107. 

 
\bibitem{GL} N. Garofalo, F.-H.  Lin, {\it Monotonicity properties of
   variational integrals, $A\sb p$ weights and unique continuation},
 Indiana Univ. Math. J.,  35 (1986), no. 2, 245--268.


\bibitem{kovarik}
H. Kova\v r\'\i k, H., Hynek {\it Heat kernels of two-dimensional magnetic Schr\"odinger
and Pauli operators}, Calc. Var. Partial Differential Equations, 44
(2012), no. 3-4, 351--374.

\bibitem{kukavica} I. Kukavica, {\it Backward uniqueness for solutions of linear parabolic
 equations}, Proc. Amer. Math. Soc., 132 (2004), no. 6, 1755--1760.

\bibitem{lw}
A. Laptev, T. Weidl, {\it Hardy inequalities for
      magnetic Dirichlet forms}, Mathematical results in quantum
    mechanics (Prague, 1998), 299--305; \textit{Oper. Theory Adv. Appl.} {108},
    Birkh\"auser, Basel, 1999.

\bibitem{lax} P. D. Lax, {\it A stability theorem for solutions of
    abstract differential
 equations, and its application to the study of the local behavior of solutions of elliptic equations}, Comm. Pure Appl. Math., 9 (1956), 747--766. 

\bibitem{lees-protter}
M. Lees, M. H. Protter, {\it Unique continuation for parabolic
  differential
 equations and inequalities}, Duke Math. J., 28 (1961), 369--382.


\bibitem{poon} C-C. Poon, {\it Unique continuation for parabolic
    equations},  Comm. Partial Differential Equations, 21 (1996),
  no. 3-4, 521--539.

\bibitem {SH} R. E. Showalter, {\it Hilbert space methods for partial
   differential equations}, Monographs and Studies in Mathematics,
 Vol. 1. Pitman, London-San Francisco, Calif.-Melbourne, 1977.

\bibitem{S} J. Simon, {\it Compact sets in the space $L^p(0, T;B)$},
 Ann. Mat. Pura Appl., (4) 146 (1987), 65--96.


\bibitem{vazquez_zuazua} J. L. Vazquez, E. Zuazua, {\it The Hardy
    inequality and the asymptotic behaviour of the heat equation with
    an inverse-square potential},  J. Funct. Anal.,  173 (2000), no. 1,
    103--153.

\bibitem{watson}
{\sc Watson, G. N.}, {\it A treatise on the theory of Bessel
functions}, 2d ed., Cambridge Univ. Press, London,
England, 1944.

\end{thebibliography}
\end{document}